\definecolor{note}{rgb}{0,0,1}  
\newtheorem{theorem}{Theorem}
\newtheorem{definition}[theorem]{Definition}
\newtheorem{proposition}[theorem]{Proposition}
\newtheorem{lemma}[theorem]{Lemma}
\newtheorem{claim}[theorem]{Claim}
\newtheorem{corollary}[theorem]{Corollary}
\newtheorem{remark}[theorem]{Remark}
\numberwithin{equation}{section}
\numberwithin{theorem}{section}
\newcommand{\mylabel}[2]{#2\def\@currentlabel{#2}\label{#1}}
\title{Folded Morse flow trees}
\author{Tianyu Yuan}
\address{Beijing International Center for Mathematical Research, Peking University, Beijing 100871, China}
\email{ytymath@pku.edu.cn} \urladdr{}
\date{\today}
\subjclass[2010]{Primary 58E05; Secondary 53D40.}
\begin{document}

\maketitle

\begin{abstract}
We present an approach to Morse theory on symmetric products of surfaces using the notion of folded ribbon trees.
We introduce an $A_\infty$-category with objects defined as $\kappa$-tuples of Morse functions, where the differential of the tuple has no self-intersection.
We show that when the graph of the differential of the $\kappa$-tuple of Morse functions on $T^*\mathbb{R}^2$ is the wrapped $\kappa$ disjoint cotangent fibers, its endormorphism is the Hecke algebra associated to the symmetric group $\mathfrak{S}_\kappa$.
\end{abstract}

\tableofcontents

\section{Introduction}

Given a Riemannian manifold $M$, Fukaya \cite{fukaya1996morse} constructed an $A_\infty$-category where the objects are Morse functions on $M$, the morphisms are generated by critical points of differences between Morse functions, and the composition maps are defined by counting Morse gradient flows associated to ribbon trees. 
To generalize from $M$ to the $\kappa$-th symmetric product $\operatorname{Sym}^\kappa(M)$, we introduce the notion of {\em folded ribbon trees}, which contains information of how the corresponding gradient flows intersect the big diagonal of $\operatorname{Sym}^\kappa(M)$.

For technical reasons explained in Appendix \ref{section-appendix-cpt}, we focus on the case when $M$ is of dimension two.
Given a surface $S$ possibly with punctures, we define an $A_\infty$-category $\operatorname{Morse}_\kappa(S)$, where the objects are $\kappa$-tuples of Morse functions $\boldsymbol{f}=(f_1,\dots,f_\kappa)$ on $S$ whose differential has no self-intersection along with some additional assumptions.
The $A_\infty$ structure is then defined by counting {\em folded Morse flow trees}, which are Morse flow graphs on $S$ associated to folded ribbon trees. 
In the first part of the paper we show that:
\begin{theorem}
    $\operatorname{Morse}_\kappa(S)$ defines an $A_\infty$-category.
\end{theorem}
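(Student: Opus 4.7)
The plan is to follow the Fukaya-style strategy for the $A_\infty$-relations, adapted to the folding data, by identifying those relations with the signed boundary count of suitable one-dimensional moduli spaces. The composition $m_k$ will be given by counting zero-dimensional moduli spaces $\mathcal{M}(\boldsymbol{f}_0,\dots,\boldsymbol{f}_k;p_1,\dots,p_k,q)$ of folded Morse flow trees with $k$ incoming external edges asymptotic to critical points $p_i$ of $\boldsymbol{f}_{i-1}-\boldsymbol{f}_i$ and one outgoing external edge asymptotic to $q$ of $\boldsymbol{f}_0-\boldsymbol{f}_k$. I would carry the argument out in three steps: transversality, compactness at the boundary, and orientation-compatible gluing.

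First, I would set up the moduli spaces and establish the transversality/regularity statement needed so that, for a residual set of tuples satisfying the no-self-intersection assumption in the definition of $\operatorname{Morse}_\kappa(S)$, the moduli space $\mathcal{M}$ is a smooth manifold whose dimension is computed from the Morse indices at the asymptotes and the combinatorial data of the underlying folded ribbon tree. Transversality of the component Morse gradient flows is standard, but the genuinely new ingredient is transversality of the folding data: the prescribed crossings with the big diagonal of $\operatorname{Sym}^\kappa(S)$ must be cut out transversely, and different folding types must appear in distinct codimension. I expect this to be achievable because the diagonal intersections are cut out by finitely many equations on the finite-dimensional parameter space of flow trees, and generic perturbations of $\boldsymbol{f}_i$ make the evaluation maps at the prescribed crossing vertices transverse.

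Next, I would analyze the compactification of the one-dimensional moduli spaces. The codimension-one boundary should consist of edge-breaking configurations: a length of some internal edge goes to infinity and the tree degenerates to a pair of folded Morse flow trees joined at a common critical point, contributing exactly the $A_\infty$-quadratic expression $\sum \pm m_{k-j+1}(\dots,m_j(\dots),\dots)$. I would need to rule out, or cancel in pairs, other potential codimension-one degenerations, in particular degenerations in which a folding crossing collides with a trivalent Morse vertex, two folding crossings merge, or the folding vertex escapes to an external edge. For generic data these should be codimension at least two, except for exchange-of-strands configurations which occur in cancelling pairs once orientations are taken into account. The compactness step I would invoke from Appendix \ref{section-appendix-cpt}, which is already flagged in the introduction as the reason for restricting to $\dim M=2$.

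Finally, I would set up coherent orientations on the moduli spaces using the usual determinant-line conventions for Morse moduli spaces, extended across folding vertices by a fixed convention on which strand is named first; the gluing map at a broken tree then preserves or reverses these orientations in a way consistent with the Koszul signs of the $A_\infty$-relations. Summing signed boundary points of the one-dimensional components yields the relations $\sum \pm m_{k-j+1}(p_1,\dots,m_j(p_i,\dots,p_{i+j-1}),\dots,p_k)=0$. The main obstacle I expect is not the high-level structure but the detailed boundary analysis near the big diagonal: verifying that all codimension-one degenerations other than edge breakings either do not occur generically or cancel in pairs, and that the gluing construction extends continuously across the folding locus. This is precisely the technical content deferred to the appendix, and the theorem will reduce to invoking those results together with the standard Fukaya-tree argument.
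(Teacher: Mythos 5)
Your high-level outline matches the Fukaya-tree strategy the paper follows, but there is a concrete gap at the transversality step. You propose to achieve regularity by generically perturbing the Morse functions $\boldsymbol{f}_i$ so that the evaluation maps at folding crossings are transverse. The paper explicitly observes that this does not work once $\kappa>1$: along a marginal edge the flow is governed by $\nabla(f_{i,j}-f_{i,k})$ with both functions drawn from the \emph{same} tuple $\boldsymbol{f}_i$, and several marginal edges in the same tree may share this vector field, so function perturbations cannot separate them. (This is the same phenomenon as multiply-covered or boundary-repeating holomorphic curves in Floer theory.) The paper instead introduces domain-dependent perturbation data $g^{m,\chi}\colon T\to\mathfrak{M}$, i.e.\ a metric varying along the tree, chosen consistently over all strata of $\mathcal{T}_0^{m,\chi}$ (Definition \ref{definition-morse-datum}, Lemma \ref{lemma-perturbation-data}), and transversality is then Lemma \ref{lemma-fredholm} plus Sard--Smale. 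Your proof would stall exactly where the fold structure matters.

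Two smaller discrepancies. First, the proposition as actually proved requires the $C^0$-boundedness hypothesis \ref{condition-A}; without it the compactness argument (Lemma \ref{lemma-cpt}) fails for marginal and stem edges of diverging length, and you do not name any replacement. Second, your treatment of the non-breaking degenerations is framed as ``cancel in pairs once orientations are taken into account,'' but the paper works over $\mathbb{Z}_2\llbracket\hbar\rrbracket$ and deliberately sidesteps orientations. What actually happens is different in kind: when an interior or exterior marginal edge shrinks to zero, the family is \emph{continued} across a change of combinatorics (Figures \ref{fig-cpt-inner}, \ref{fig-cpt-mar}), not cancelled; the constant-subgraph degenerations near the big diagonal are shown in Appendix \ref{section-appendix-cpt} (Lemmas \ref{lemma-diagonal-vanish}--\ref{lemma-no-diagonal}) to have codimension at least two or be empty, which is where the restriction $\dim S=2$ is used. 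So the only true codimension-one boundary is stem-edge breaking, and those give the $A_\infty$-relations directly.
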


% The settings to define gradient graphs does not impose much restriction on $S$. 
% However, the only example we consider is $\mathbb{R}^2$ since we need the differential of Morse functions to have no self-intersection.
% Therefore, although we state the definitions and theorems as general as possible, one should always refer $S$ to $\mathbb{R}^2$ in this paper.

When $S=\mathbb{R}^2$, we compare $\operatorname{Morse}_\kappa(\mathbb{R}^2)$ with the higher-dimensional Heegaard Floer (HDHF) category $\mathcal{F}_\kappa(T^*\mathbb{R}^2)$ due to Colin, Honda, and Tian \cite{colin2020applications}.
An object $\boldsymbol{f}=(f_1,\dots,f_\kappa)$ of $\operatorname{Morse}_\kappa(\mathbb{R}^2)$ naturally corresponds to the object $\Gamma_{d\boldsymbol{f}}=(\Gamma_{df_1},\dots,\Gamma_{df_\kappa})$ of $\mathcal{F}^{gr}_\kappa(T^*\mathbb{R}^2)\subset \mathcal{F}_\kappa(T^*\mathbb{R}^2)$, where $\Gamma_{df_i}$ is the graph of $df_i$ in $T^*\mathbb{R}^2$ and $\mathcal{F}^{gr}_\kappa(T^*\mathbb{R}^2)$ is the subcategory of $\mathcal{F}_\kappa(T^*\mathbb{R}^2)$ containing graphical Lagrangians.
With some additional assumptions, we define an $A_\infty$-functor $\mathcal{E}$ from $\mathcal{F}^{gr}_\kappa(T^*\mathbb{R}^2)$ to $\operatorname{Morse}_\kappa(\mathbb{R}^2)$ which on the object level maps $\Gamma_{d\boldsymbol{f}}$ to $\boldsymbol{f}$. 
The higher composition maps are defined by counting a mixed moduli space of pseudoholomorphic curves and Morse flow graphs.
We show that $\mathcal{E}$ is an $A_\infty$-equivalence.
% The special case of $\kappa=1$ is already discussed in \cite{abouzaid2011topological}.

Consider a $\kappa$-tuple of disjoint cotangent fibers, denoted by the disjoint union $\sqcup_{i=1}^\kappa T^*_{q_i}\mathbb{R}^2$. 
The partially wrapping of $\sqcup_{i=1}^\kappa T^*_{q_i}\mathbb{R}^2$ in $T^*\mathbb{R}^2$ can be viewed as an object of $\mathrm{Morse}_\kappa(\mathbb{R}^2)$. 
Then we get an $A_\infty$-algebra $\operatorname{End}_{Mor}(\sqcup_{i=1}^\kappa T^*_{q_i}\mathbb{R}^2)$.
On the other hand, $\sqcup_{i=1}^\kappa T^*_{q_i}\mathbb{R}^2$ is an object of the (wrapped) HDHF category $\mathcal{F}^{gr}_\kappa(T^*\mathbb{R}^2)$.
The endormorphism algebra $\operatorname{End}_{Fuk}(\sqcup_{i=1}^\kappa T^*_{q_i}\mathbb{R}^2)$ turns out to be the finite Hecke algebra $H_\kappa$ due to \cite{honda2022jems} and \cite{tian2022example}.
Therefore, we can state the main result:

\begin{theorem}
    $\operatorname{End}_{Mor}(\sqcup_{i=1}^\kappa T^*_{q_i}\mathbb{R}^2)\simeq H_\kappa$.
\end{theorem}

The main purpose of this paper is to introduce the notion of folded Morse flow trees and we only consider the simplest case where objects are tuples of Morse functions. 
The examples are quite restrictive since we require that the differential of the Morse functions in a tuple has no self-intersection. 
In general, we can consider a broader range of candidates for objects, e.g., (1)
the front projection of Legendrian submanifolds in 1-jet spaces; (2) the multi-valued functions corresponding to wrapped cotangent fibers in cotangent bundles.

% This paper focuses on demonstrating how to define the $A_\infty$ structure using folded Morse flow trees. 
\vskip.15in
We now briefly discuss another application of folded Morse flow trees, which will appear in a future work. 
Consider a spectral curve, which is an $N$-fold holomorphic branched cover $\pi\colon\Sigma\to C$ between complex curves.
In the work of Gaiotto, Moore, and Neitzke \cite{gaiotto2013spectral}, they introduced the notion of spectral networks, which are essentially certain Morse flows, and showed that a $\mathfrak{gl}(1)$-local system on $\Sigma$ induces a $\mathfrak{gl}(N)$-local system on $C$.
Based on the work of Gaiotto-Moore-Neitzke, Neitzke and Yan \cite{neitzke2020q} then defined a $q$-deformed verion, i.e., there is a map from the $\mathfrak{gl}(N)$-skein algebra on $C\times\mathbb{R}$ to the $\mathfrak{gl}(1)$-skein algebra on $\Sigma\times\mathbb{R}$.
We give another construction similar to Neitzke-Yan. 
Using folded Morse flow trees, we can define a homomorphism from the braid skein algebra of $C$ to the braid skein algebra of $\Sigma$.
The advantage of our definition is that the involved spectral networks are always rigid regardless of the number of loops. 

Furthermore, in the future work, we will give the interpretation of both Neitzke-Yan's and our definition by holomorphic curves.
This depends on a direct comparison between moduli spaces of holomorphic curves and Morse gradient flows, which generalizes the work of Fukaya and Oh \cite{fukaya1997zero}.
We will show that Neitzke-Yan's approach corresponds to open Gromov-Witten theory, based on the work of Ekholm and Shende \cite{ekholm2021skeins}, and our approach by folded Morse flow trees corresponds to higher-dimensional Heegaard Floer theory.

% One may observe the similarity of our settings to spectral networks \cite{gaiotto2013spectral}.
% We will study the relation in a future work.

~\\
\noindent \textit{Acknowledgements}. I would like to thank Ko Honda and Yin Tian for numerous ideas and suggestions.
I also thank Siyang Liu, Yoon Jae Nho, Vivek Shende, Tadayuki Watanabe, and Peng Zhou for valuable discussions.
I am partially supported by China Postdoctoral Science Foundation 2023T160002.

\section{Morse theory of multiple functions}
\label{section-morse}

\subsection{Folded ribbon trees}

Our Morse theory involves counting certain Morse flow graphs with a projection to some decorated ribbon trees, which represents the $A_\infty$ structure.

We first recall the definition of a ribbon tree $T$, which is topologically a tree embedded in $D^2$ so that $\partial T\subset\partial D^2$ along with additional requirements. 
Denote the set of vertices and edges of $T$ by $V$ and $E$.
All edges of $E$ are directed.
We write
\begin{equation*}
    V=V_{ext}\sqcup V_{inn},\quad E=E_{ext}\sqcup E_{inn},
\end{equation*}
so that $V_{ext}$ consists of vertices on $\partial D^2$ and $E_{ext}$ consists of edges adjacent to $V_{ext}$. 
We require that each $v\in V_{inn}$ has degree $|v|\geq3$. 
Denote the starting and ending vertex of $e\in E$ by $v_{st}(e)$ and $v_{en}(e)$, respectively.

For each $v\in V_{inn}$, there is a (counter-clockwise) cyclic order of the edges attached to $v$:
\begin{equation*}
    E_v=\{e_{v,0},e_{v,1},\dots,e_{v,|v|-1}\},
\end{equation*}
where $E_v$ is the set of edges adjacent to $v$, so that $e_{v,0}$ is the unique outgoing edge. 
Therefore, there is a unique edge $e_0\in E_{ext}$ which is outgoing (pointing outwards $\partial D^2$) and all other edges of $E_{ext}$ are incoming.
We denote the outgoing vertex of $e_0$ by $v_0$.

Given $v_1,v_2\in V$, denote by $E_{v_1\to v_2}$ the set of edges on the unique path starting from $v_1$ and ending on $v_2$.
Note that $E_{v_1\to v_2}$ may be empty.

The tree $T$ naturally induces partial orders on both $V$ and $E$: for $v_1,v_2\in V$, we write $v_1\preceq v_2$ if $v_1$ is on the path from $v_2$ to $v_0$; for $e_1,e_2\in E$, $e_1\preceq e_2$ if $e_1$ is on the path from $e_2$ to $e_0$.
For $e\in E$, denote $E_{\succeq e}=\{e'\in E\,|\,e\preceq e'\}$.

\begin{figure}
    \centering
    \includegraphics[width=6cm]{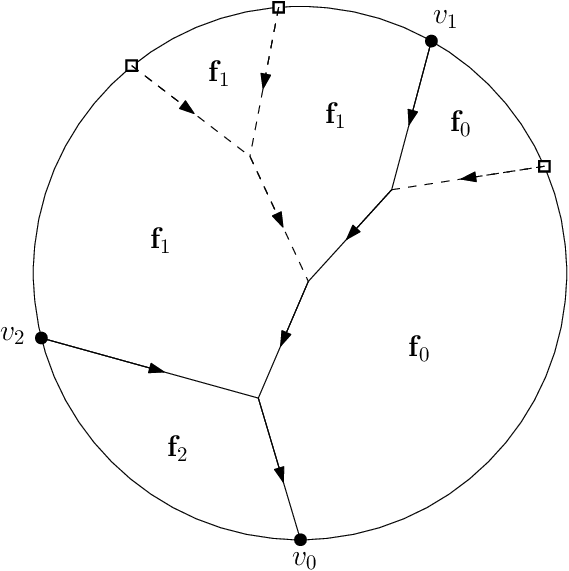}
    \caption{A folded ribbon tree with $|V_{ste}|=3$. The marginal vertices and marginal edges are denoted by boxes and dashed lines.}
    \label{fig-ribbon-tree}
\end{figure}

\begin{definition}
    A folded ribbon tree is a tuple $T_*=(T,\sigma,l)$ which satisfies
    \begin{enumerate}
        \item $T$ is a ribbon tree.
        
        \item $V_{ext}=V_{ste}\sqcup V_{mar}$. 
        We call $V_{ste}$ stem vertices and $V_{mar}$ marginal vertices. 
        We require $v_0\in V_{ste}$. 
        If $e\in E$ is on the path starting from some $v\in V_{ste}$, we say $e$ is a stem edge.
        Denote the set of stem edges by $E_{ste}$ and let $E=E_{ste}\sqcup E_{mar}$.
        Suppose $|V_{ste}|=m+1$, then we label $V_{ste}$ by $v_0,v_1,\dots,v_m$ in the (counter-clockwise) cyclic order of $\partial D^2$. 
        The corresponding adjacent edges are labelled by $e_0,e_1,\dots,e_m$.
        
        \item $\sigma\colon E\to \mathfrak{S}_\kappa$ assigns each edge a permutation so that for each $v\in V_{inn}$, 
        \begin{equation*}
            \sigma(e_{v,0})=\sigma(e_{v,|v|-1})\cdots\sigma(e_{v,1}).
        \end{equation*}
        For all $e\in E_{mar}\cap E_{ext}$, $\sigma(e)$ is a transposition.
        For all $e\in E_{mar}$, $\sigma(e)\neq id$.

        \item Incoming edges in $E_{ext}\cap E_{ste}$ are identified with $[0,\infty)$ and the outgoing edge in $E_{ext}$ is identified with $(-\infty,0]$.
        Each $e\in E_{inn}\cup E_{mar}$ is identified with $[0,l(e)]$, where
        \begin{equation*}
            l\colon E_{inn}\cup E_{mar}\to[0,\infty)
        \end{equation*}
        is a length function.
    \end{enumerate}
    
    See Figure \ref{fig-ribbon-tree} for an example.
\end{definition}

\begin{remark}
    Note that the identification of edges with intervals of $\mathbb{R}$ in condition (4) reverses the orientation.
    From this point onwards, by orientation we always refer to the orientation of edges unless stated otherwise.
\end{remark}

Given a folded ribbon tree $(T,\sigma,l)$ with $|V_{ste}|=m+1$, we define
\begin{equation*}
    i_l,i_r\colon E\to \{0,1,\dots,m\}
\end{equation*}
as follows: For $e\in E$, let $C_l$ ($C_r$) be the component of $D^2\backslash E_{ste}$ whose boundary contains $e$ and has the same (opposite) orientation. 
If the endpoint of the oriented arc $\partial C_l\cap\partial D_2$ ($\partial C_r\cap\partial D_2$) is $v_i$, then we set $i_l(e)=i$ ($i_r(e)=i$).

% Let
% \begin{equation*}
%     F_{\succeq e}=\bigcap_{e'\in E_{\succeq e}\cap E_{mar}\cap E_{ext}}F_{\sigma(e')},
% \end{equation*}
% where $F_{\sigma}\subset\{1,\dots,\kappa\}$ denotes the set of fixed points of $\sigma\in\mathfrak{S}_\kappa$.

A folded ribbon tree $T_*$ uniquely determines a ribbon graph $G_*$, which is approximately a $\kappa$-to-1 cover of $T$.

\begin{figure}
    \centering
    \includegraphics[width=12cm]{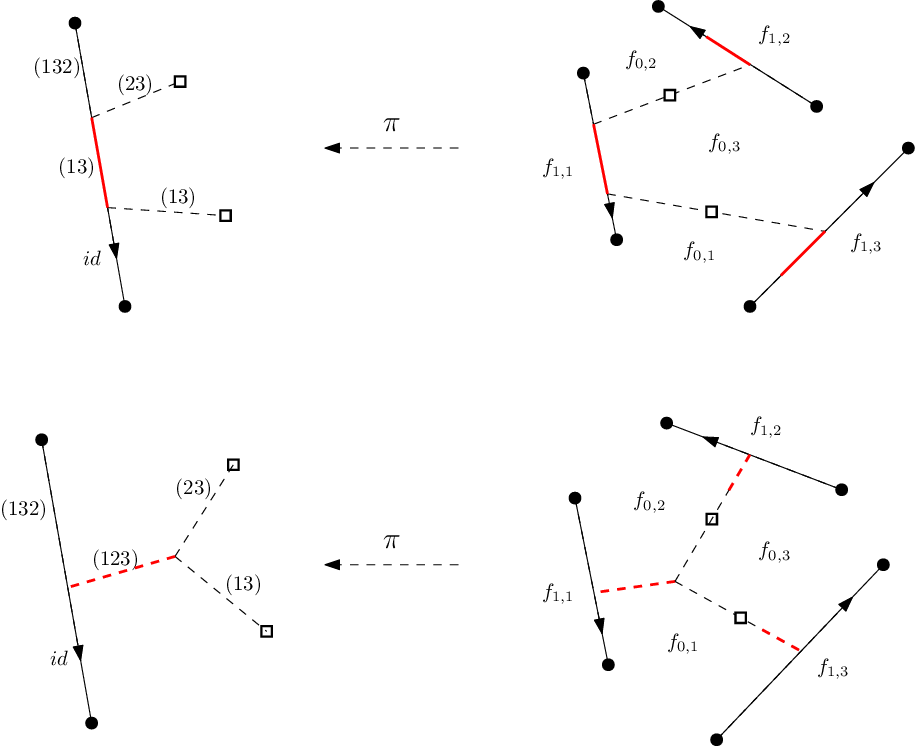}
    \caption{Two ribbon graphs are depicted on the right side, along with their corresponding projections onto folded ribbon trees shown on the left side. Here the notation $f_{i,h}$ determines $i_l,h_l$ (or $i_r,h_r$) for each edge of $G$ by viewing $G$ as embedded in $\mathbb{R}^2$.}
    \label{fig-ribbon-graph}
\end{figure}

\begin{definition}
    A ribbon graph associated to a folded ribbon tree $T_*$ is a tuple $G_*=(G,T_*,\pi,h_l,h_r,i_l,i_r)$ which satisfies
    \begin{enumerate}
        \item G is a graph with vertices $\tilde{V}$ and directed edges $\tilde{E}$.
        
        \item $\pi\colon G\to T$ is a continuous map so that the restriction $\pi|_{\tilde{e}}$ is a diffeomorphism to some $e\in E$ for each $\tilde{e}\in \tilde{E}$. 
        % Denote the set of edges in the preimage of $e$ by $\tilde{E}^e$.
        When there is no ambiguity, we also denote $\pi\colon \tilde{E}\to E$ the map between sets of edges induced by $\pi$.
        
        \item $\pi(\tilde{V})\subset V$.

        \item For each $e\in E$, $|\pi^{-1}(e)|=\kappa$. 
        $h_l,\,h_r\colon \pi^{-1}(e)\to \{1,\dots,\kappa\}$ are bijections so that $h_r(\tilde{e})=\sigma(e)(h_l(\tilde{e}))$ for each $\tilde{e}\in \pi^{-1}(e)$.

        \item For each $e\in E_{mar}\cap E_{ext}$, since $\sigma(e)$ is a transposition, there exist two edges $\tilde{e}_1,\tilde{e}_2\in\pi^{-1}(e)$ so that $h_l(\tilde{e}_i)\neq h_r(\tilde{e}_i)$, $i=1,2$.
        We require that their starting points on $\tilde{V}_{mar}$ coincide, i.e., $v_{st}(\tilde{e}_1)=v_{st}(\tilde{e}_2)$.

        % \item For each $e\in E_{mar}$, $|\pi^{-1}(e)|=\kappa-|F_{\succeq e}|$. 
        % $h_l,\,h_r\colon \pi^{-1}(e)\to \{1,\dots,\kappa\}\backslash F_{\succeq e}$ are bijections so that $h_r(\tilde{e})=\sigma(e)(h_l(\tilde{e}))$ for each $\tilde{e}\in \pi^{-1}(e)$. 

        % \item For each $e\in E_{mar}\cap E_{ext}$, $|\pi^{-1}(e)|=2$. 
        % $h_l,\,h_r\colon \pi^{-1}(e)\to \{1,\dots,\kappa\}\backslash F_{\sigma(e)}$ are bijections so that $h_r(\tilde{e})=\sigma(e)(h_l(\tilde{e}))$ for each $\tilde{e}\in \pi^{-1}(e)$, where $F_\sigma$ denotes the set of fixed points of $\sigma\in \mathfrak{S}_\kappa$. 

        \item Each $\tilde{e}\in \tilde{E}$ is identified with $(-\infty,0]$, $[0,\infty)$, or $[0,l(\pi(\tilde{e}))]$ induced from $\pi(\tilde{e})$.

        \item For $\tilde{e}\in \tilde{E}$, we set $i_l(\tilde{e})=i_l(\pi(\tilde{e}))$ and $i_r(\tilde{e})=i_r(\pi(\tilde{e}))$.
    \end{enumerate}
\end{definition}

Note that the Euler characteristic of $G$ is $\chi(G)=\kappa-|V_{mar}|$.

Figure \ref{fig-ribbon-graph} shows an example of a ribbon graph.

\subsection{Perturbation data}
\label{section-morse-data}

When $\kappa=1$, the folded ribbon trees reduce to the case discussed in \cite{fukaya1997zero}, where transversality can be achieved by perturbing Morse functions.
However, for $\kappa\geq1$, perturbing Morse functions is not sufficient to ensure transversality due to the existence of marginal edges.
Instead, we choose a family of domain-dependent metrics to make enough perturbation. 
This is analogous to the choice of Floer data in the construction of the Fukaya category \cite{seidel2008fukaya}.

Denote the set of folded ribbon trees by $\mathcal{T}$. 
Let $p_0$ be the forgetful map
\begin{gather}
    p_0\colon \mathcal{T}\to\mathcal{T}_0,\quad (T,\sigma,l)\mapsto(T,l),
\end{gather}
where $\mathcal{T}_0$ represents the set of all pairs $(T,l)$.
There is a natural topology on both $\mathcal{T}$ and $\mathcal{T}_0$, induced by the length of inner edges. Therefore $p_0$ is a continuous map.
Let $\mathcal{T}^{m,\chi}$ (resp. $\mathcal{T}^{m,\chi}_0$) be the subset of $\mathcal{T}$ (resp. $\mathcal{T}_0$) where $|V_{ste}|=m+1$ and the associated ribbon graph has Euler characteristic $\chi$. 
Note that $\mathcal{T}^{m,\chi}_0$ is well-defined since the Euler characteristic of the associated ribbon graph depends only on $T$.

Consider a surface $S$ with metric $g_0$.
Denote by $\mathfrak{M}$ the set of metrics on $S$ which coincide with $g_0$ outside a compact subset of $S$.

\begin{definition}
\label{definition-morse-datum} 
    A {\bf perturbation datum} on $(T,l)\in \mathcal{T}^{m,\chi}_0$ is a map
    \begin{equation*}
        g^{m,\chi}\colon T\to\mathfrak{M}
    \end{equation*}
    such that $g^{m,\chi}=g_0$ on $E_{ext}\cap E_{ste}$.
    
    A {\bf choice of perturbation data} on $\mathcal{T}^{m,\chi}_0$ is a choice of
    \begin{equation*}
        g^{m,\chi}\colon T\to\mathfrak{M}
    \end{equation*}
    that depends continuously on $(T,l)$ for each $m,\chi$, and satisfies {\em compatibility with boundary strata}: If, as $i\to\infty$, the sequence $\{(T,l^i)\}\subset\mathcal{T}^{m,\chi}$ converges to 
    $$((T_1,l_1),(T_2,l_2))\in\mathcal{T}^{m_1,\chi_1}\times\mathcal{T}^{m_2,\chi_2}$$ with $m_1+m_2=m+1$ and $\chi_1+\chi_2=\chi+\kappa$, i.e., $l^i(e)\to\infty$ for some $e\in E_{inn}\cap E_{ste}$ so that $T_1$ is obtained from $T$ by removing $e$ and attaching an edge $e_-=(-\infty,0]$ to $v_{st}(e)$; $T_2$ is obtained from $T$ by removing $e$ and attaching an edge $e_+=[0,+\infty)$ to $v_{en}(e)$.
    Then we require that
    \begin{enumerate}
        \item On the breaking edge $e$, for each $s$,
        \begin{equation*}
            g^{m,\chi}_{T,l^i}(s)\to g^{m_1,\chi_1}_{T,l_1}(s)=g_0,\quad g^{m,\chi}_{T,l^i}(s-l^i(e))\to g^{m_2,\chi_2}_{T,l_2}(s)=g_0
        \end{equation*}
        as $i\to\infty$, where $g^{m_1,\chi_1}_{T,l_1}$ (resp. $g^{m_2,\chi_2}_{T,l_2}$) is defined on $e_-$ (resp. $e_+$).
        
        \item If $e'\in E\backslash\{e\}$ limits to $T_1$ (resp. $T_2$), for each $s$,
        \begin{equation*}
            g^{m,\chi}_{T,l^i}(s)\to g^{m_1,\chi_1}_{T,l_1}(s),\quad \left(\text{resp. }g^{m,\chi}_{T,l^i}(s)\to g^{m_2,\chi_2}_{T,l_2}(s)\right)
        \end{equation*}
        as $i\to\infty$.
    \end{enumerate}
    The definition of compatibility is similar if $\{(T,l^i)\}\subset\mathcal{T}^{m,\chi}$ breaks into more than two terms.
\end{definition}

\begin{lemma}
\label{lemma-perturbation-data}
    Suppose ${g}^{m,\chi}$ has been chosen for $m\leq k$, $\chi\geq r$, and satisfies the above conditions, then it can be extended to a choice of perturbation data for all $m,\chi$.
\end{lemma}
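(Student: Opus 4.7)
The plan is to proceed by induction on a well-founded ordering of the pairs $(m,\chi)$, chosen so that every boundary stratum of $\mathcal{T}^{m,\chi}$ corresponds to pairs strictly earlier in the ordering. From Definition~\ref{definition-morse-datum}, every breaking $\mathcal{T}^{m,\chi}\to\mathcal{T}^{m_1,\chi_1}\times\mathcal{T}^{m_2,\chi_2}$ satisfies $m_i\leq m$ and $\chi_i\geq\chi$. First I would rule out the degenerate possibility $m_i=m,\chi_i=\chi$: it would force $m_j=1$ and $\chi_j=\kappa$, but a tree with two external stem vertices, no marginal vertices, and at least one inner vertex of degree $\geq 3$ (the former endpoint of the broken edge $e\in E_{inn}$) cannot exist, by a direct count of vertices versus edges. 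Hence the lexicographic order on $(m,-\chi)$ is well-founded; trees with no inner stem edge together with the given range $m\leq k$, $\chi\geq r$ furnish the base cases, and every other pair sits above pairs already treated.

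For the inductive step at $(m,\chi)$ outside the base range, I would view the compactified moduli $\overline{\mathcal{T}}^{m,\chi}_0$ as a manifold with corners whose boundary is a union of products of strictly simpler moduli, each already equipped with a perturbation datum by hypothesis. At codimension $\geq 2$ corners, where several inner stem edges break simultaneously (the case ``more than two terms'' in Definition~\ref{definition-morse-datum}), I would verify continuity by iterating the compatibility condition: the different orders in which one can pass to the limit at a single corner produce the same iterated product of lower-complexity data, so the inductive choices on adjacent strata agree on their intersections. This glues the inductive data into a continuous boundary choice defined on all of $\partial\overline{\mathcal{T}}^{m,\chi}_0$.

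Finally I would extend this boundary choice to the interior. The target space $\mathfrak{M}$ is convex, since any two metrics coinciding with $g_0$ outside a common compact set admit a convex combination lying in $\mathfrak{M}$. Hence the bundle of valid perturbation data over $\overline{\mathcal{T}}^{m,\chi}_0$ has contractible fibers, and a partition of unity subordinate to a cover of $\overline{\mathcal{T}}^{m,\chi}_0$ by tubular neighborhoods of the boundary strata, together with an interior open set on which one declares $g^{m,\chi}\equiv g_0$, yields the required continuous extension. By construction it restricts to $g_0$ on $E_{ext}\cap E_{ste}$. The main obstacle I anticipate is precisely the corner-consistency check of the second paragraph: one must unpack the corner structure of $\overline{\mathcal{T}}^{m,\chi}_0$ carefully and show that every codimension-$k$ corner is described by a single iterated product of lower-complexity strata regardless of the order in which the edges are seen to break. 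Once this compatibility is verified, convexity of $\mathfrak{M}$ closes the induction routinely.
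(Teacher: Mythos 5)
Your proof is correct and follows essentially the same route as the paper's: induct on $(m,\chi)$, extend inductively to each new stratum using compatibility with already-chosen boundary data, and close the argument via convexity of $\mathfrak{M}$. In fact you spell out two points the paper glosses over — the well-foundedness of the ordering on $(m,-\chi)$ (including ruling out the apparent ``degenerate'' breaking $m_i=m$, $\chi_i=\chi$ by a degree count) and the codimension-$\geq 2$ corner consistency check — which is a genuine improvement in rigor even though it is not a different method.
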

\begin{proof}
    The construction is similar to but easier than that of choosing Floer data on a family of Riemann surfaces.
    If ${g}^{m,\chi}$ has been chosen for $m\leq k$, $\chi\geq r$, and now we want to choose ${g}^{m,\chi}$ for some $m\leq k+1,\chi\geq r-1$, all we need is that ${g}^{m,\chi}$ is compatible with the boundary strata $\mathcal{T}^{m_1,\chi_1}\times\mathcal{T}^{m_2,\chi_2}$, which has already been chosen by assumption.
    This can be easily achieved since $\mathfrak{M}$ is a convex set.
\end{proof}

When there is no ambiguity, we write $g^{m,\chi}$ as $g$ and also denote
\begin{equation*}
    g\colon G\to\mathfrak{M},\quad x\mapsto g(\pi(x)).
\end{equation*}

\subsection{The $A_\infty$ structure}

Given $(S,g_0)$ and fixing a choice of perturbation data, we define the $A_\infty$-category $\operatorname{Morse}_\kappa(S)$ as follows:

The objects of $\operatorname{Morse}_\kappa(S)$ are $\kappa$-tuples of Morse functions $\boldsymbol{f}=(f_1,\dots,f_\kappa)$ on $S$ such that $df_i-df_j$ is nonvanishing for each pair $i\neq j$.
For two objects $\boldsymbol{f}_0,\boldsymbol{f}_1$, the morphism space $\operatorname{hom}(\boldsymbol{f}_0,\boldsymbol{f}_1)$ is the vector space over $\mathbb{Z}_2\llbracket\hbar\rrbracket$ generated by $\kappa$-tuples of intersection points between $d\boldsymbol{f}_0$ and $d\boldsymbol{f}_1$. 
In other words, if $\boldsymbol{q}\in\operatorname{hom}(\boldsymbol{f}_0,\boldsymbol{f}_1)$, then $\boldsymbol{q}=(q_1,\dots,q_\kappa)$ where $q_i\in \operatorname{Crit}(f_{0,i},f_{1,\sigma(i)})$ for some $\sigma\in \mathfrak{S}_\kappa$ and $i=1,\dots,\kappa$.

Consider $m+1$ objects $\{\boldsymbol{f}_0,\dots,\boldsymbol{f}_m\}$ of $\operatorname{Morse}_\kappa(S)$.
Denote $\boldsymbol{f}_{m+1}\coloneqq\boldsymbol{f}_0$. 
We assume that $f_{i,j}-f_{i+1,k}$ is Morse-Smale with respect to $g_0$ for all $i,j,k$, i.e.,
\begin{itemize}
    \item The critical points of $f_{i,j}-f_{i+1,k}$ are Morse;
    \item The unstable manifolds and stable manifolds of $-\nabla_{g_0}(f_{i,j}-f_{i+1,k})$ intersect transversely.
\end{itemize}

Let $\boldsymbol{q}_i\in \mathrm{hom}(\boldsymbol{f}_{i-1},\boldsymbol{f}_{i})$ for $i=1,\dots,m$ and $\boldsymbol{q}_0\in \mathrm{hom}(\boldsymbol{f}_0,\boldsymbol{f}_m)$.
The moduli space $\mathcal{M}(\boldsymbol{q}_1,\dots,\boldsymbol{q}_m;\boldsymbol{q}_0)$ consists of tuples $(G_*,\gamma)$ that satisfy the following conditions:

\begin{enumerate}
    \item $G_*$ is a ribbon graph with $|V_{ste}|=m+1$.

    \item $\gamma\colon G\to S$ is a continuous map such that
    \begin{equation*}
        \dot{\gamma}|_{\tilde{e}}=\nabla_g(f_{i_r(\tilde{e}),h_r(\tilde{e})}-f_{i_l(\tilde{e}),h_l(\tilde{e})}),
    \end{equation*}
    on each $\tilde{e}\in \tilde{E}$ identified with an interval of $\mathbb{R}$ specified by the underlying folded ribbon tree $T_*$.

    \item For $\pi(\tilde{v})\in V_{ste}$, denote the unique edge adjacent to $\tilde{v}$ by $\tilde{e}\in\tilde{E}$. 
    If $\tilde{v}$ is incoming then $\gamma(\tilde{v})=q_{i_r(\tilde{e}),h_l(\tilde{e})}$. If $\tilde{v}$ is outgoing then $\gamma(\tilde{v})=q_{0,h_l(\tilde{e})}$.
\end{enumerate}
We call the tuple $(G_*,\gamma)$ a {\em folded Morse flow tree}.

\vskip.15in
Given objects $\boldsymbol{f}_0,\boldsymbol{f}_1\in\operatorname{Morse}_\kappa(S)$, we define a grading for $\boldsymbol{q}\in\operatorname{hom}(\boldsymbol{f}_0,\boldsymbol{f}_1)$ by
\begin{equation}
    |\boldsymbol{q}|\coloneqq |q_1|+\dots+|q_\kappa|,
\end{equation}
where $|q_i|$ is the Morse co-index of $q_i$.

Given $\boldsymbol{q}=(q_1,\dots,q_\kappa)\in\operatorname{hom}(\boldsymbol{f}_0,\boldsymbol{f}_1)$ with $q_i\in\mathrm{Crit}(f_{0,i},f_{1,\sigma(i)})$ for some $\sigma\in\mathfrak{S}$, we define the action
\begin{equation}
    \mathcal{A}(\boldsymbol{q})\coloneqq \sum_{i=1}^\kappa f_{1,\sigma(i)}(q_i)-\sum_{i=1}^\kappa f_{0,i}(q_i).
\end{equation}
By definition, if $(G_*,\gamma)\in\mathcal{M}(\boldsymbol{q}_1,\dots,\boldsymbol{q}_m;\boldsymbol{q}_0)$, then
\begin{equation}
    \mathcal{A}(\boldsymbol{q}_0)-\sum_{i=1}^m\mathcal{A}(\boldsymbol{q}_i)=-\sum_{\tilde{e}\in \tilde{E}}\int_{\tilde{e}} \gamma^*(df_{i_r(\tilde{e}),h_r(\tilde{e})}-df_{i_l(\tilde{e}),h_l(\tilde{e})})\leq 0.
\end{equation}
We also write $\mathcal{M}_T^\chi(\boldsymbol{q}_1,\dots,\boldsymbol{q}_m;\boldsymbol{q}_0)$ to specify $T$ and the Euler characteristic $\chi$ of the domain graph $G$.

Note that in this paper, we use $\mathbb{Z}_2\llbracket\hbar\rrbracket$ as the coefficient ring and omit the discussion about orientations of related moduli spaces.

\begin{lemma}
\label{lemma-trans}
    Fixing a generic choice of perturbation data, $\mathcal{M}^\chi(\boldsymbol{q}_1,\dots,\boldsymbol{q}_m;\boldsymbol{q}_0)$ is a smooth manifold of dimension
    \begin{equation*}
        |\boldsymbol{q}_0|-|\boldsymbol{q}_1|-\dots-|\boldsymbol{q}_m|+m-2.
    \end{equation*}
\end{lemma}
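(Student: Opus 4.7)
The plan is to realize $\mathcal{M}^\chi(\boldsymbol{q}_1,\ldots,\boldsymbol{q}_m;\boldsymbol{q}_0)$ as a fiber product fibered over the moduli $\mathcal{T}^{m,\chi}$ of folded ribbon trees, to compute the expected dimension by an Euler-characteristic bookkeeping, and to apply a Sard--Smale argument to the universal moduli problem in order to achieve transversality for generic perturbation data.

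First I would stratify by combinatorial type of the underlying tree $T$ and restrict to the top stratum, where all interior vertices are trivalent. With $(T,\sigma,l)$ fixed, I would cut out the fiber of $\gamma\colon G\to S$ as a transverse fiber product: each finite-length edge of $G$ contributes a two-dimensional flow space parameterized by its initial point in $S$; each incoming external stem edge asymptotic to $q_{i,j}$ contributes the stable manifold $W^s(q_{i,j})$ of dimension $2-|q_{i,j}|$; each outgoing external stem edge asymptotic to $q_{0,j}$ contributes the unstable manifold $W^u(q_{0,j})$ of dimension $|q_{0,j}|$; each interior vertex of degree $d$ of $G$ imposes a matching of codimension $2(d-1)$; and each of the $\kappa-\chi$ merging marginal vertices imposes a codimension-$2$ matching, while the remaining $(\kappa-2)(\kappa-\chi)$ degree-one marginal vertices impose no condition. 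A direct count gives expected fiber dimension $|\boldsymbol{q}_0|-\sum_i|\boldsymbol{q}_i|-2(\kappa-\chi)$, and combining with $\dim\mathcal{T}^{m,\chi}=m+2(\kappa-\chi)-2$ (the number of finite-length edges in a trivalent $T$ with $m+1$ stem and $\kappa-\chi$ marginal external vertices) yields the claimed total $|\boldsymbol{q}_0|-\sum_i|\boldsymbol{q}_i|+m-2$ after the $\chi$-dependent terms cancel.

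Next, I would form the universal moduli space $\widetilde{\mathcal{M}}^\chi$ of pairs $(\gamma,g)$ as $g$ ranges over a Banach space of perturbation data satisfying the compatibility of Definition~\ref{definition-morse-datum}. Since the equations are ODEs rather than elliptic PDEs, showing that $\widetilde{\mathcal{M}}^\chi$ is a Banach manifold and that the projection to the space of $g$'s is Fredholm of index equal to the expected dimension is considerably simpler than in the Floer-theoretic setting; Sard--Smale then produces a residual set of regular $g$ for which $\mathcal{M}^\chi(g)$ is a smooth manifold of the stated dimension.

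The hardest part will be transversality at the merging marginal vertices. At such a vertex, projecting to a marginal edge $e$ of $T$ with transposition $\sigma(e)=(ab)$ and $i_l(e)=i_r(e)=i$, the two incident edges $\tilde{e}_1,\tilde{e}_2$ carry gradient flows of $f_{i,b}-f_{i,a}$ and of its negative, traveling in opposite directions along the same gradient line; because $\tilde{e}_1$ and $\tilde{e}_2$ share the length $l(e)$ and the same restriction of $g$ to $e$, the matching $\gamma|_{\tilde{e}_1}(0)=\gamma|_{\tilde{e}_2}(0)$ is a genuine codimension-$2$ condition rather than an automatic symmetry. The nonvanishing hypothesis $df_{i,j}-df_{i,k}\neq 0$ built into the definition of objects of $\operatorname{Morse}_\kappa(S)$ ensures that these gradients are nowhere zero, so perturbing $g$ along $e$ and varying the length $l(e)$ gives independent first-order motion of the two initial points; combined with the ample perturbation supply of Lemma~\ref{lemma-perturbation-data}, this will close the Sard--Smale argument at marginal vertices and complete the proof.
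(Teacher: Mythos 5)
Your proposal follows essentially the same route as the paper: the paper deduces this lemma from Lemma \ref{lemma-fredholm} in Appendix \ref{section-appendix-trans}, which realizes $\mathcal{M}^\chi(\boldsymbol{q}_1,\dots,\boldsymbol{q}_m;\boldsymbol{q}_0)$ as the preimage $\widehat{\Psi}^{-1}(A)$ of a product of unstable manifolds and diagonal strata under a flow map on $S^\kappa\times\mathcal{L}_T\times\mathcal{K}_T$, computes the Fredholm index of the projection to $\mathcal{K}_T$, and invokes Sard--Smale after showing that perturbing the tree-dependent metric makes the linearization surjective. Your edge-by-edge fiber-product bookkeeping (with the $\chi$-dependence cancelling against $\dim\mathcal{T}^{m,\chi}$) is an equivalent way to organize the same index count, and you correctly identify that transversality at the marginal edges is the nontrivial point handled by varying $g$, though be careful with the phrasing there: the two preimage edges over a marginal edge $e$ both carry the same $g|_e$, so it is the downstream endpoints of the concatenated flow arc, not two separate ``initial points,'' whose first-order motions must be shown to be independent.
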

\begin{proof}
    This follows from Lemma \ref{lemma-fredholm} and the Sard-Smale Theorem.
\end{proof}

We impose an additional assumption to achieve compactness:
\begin{enumerate}[label=(\Alph*)]
    \item $C^0$-boundedness: There is a compact subset $K\subset S$ such that for each $(G_*,\gamma)\in\mathcal{M}(\boldsymbol{q}_1,\dots,\boldsymbol{q}_m;\boldsymbol{q}_0)$, the image $\gamma(G)\subset K$. \label{condition-A}
\end{enumerate}

\begin{lemma}
\label{lemma-cpt}
    Under assumption \ref{condition-A} and fixing a generic choice of perturbation data, if $|\boldsymbol{q}_0|-|\boldsymbol{q}_1|-\dots-|\boldsymbol{q}_m|+m-2\leq1$, then $\mathcal{M}^\chi(\boldsymbol{q}_1,\dots,\boldsymbol{q}_m;\boldsymbol{q}_0)$ admits a compactification for each $\chi$. 
\end{lemma}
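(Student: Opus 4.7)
The plan is to execute the standard Gromov--Floer compactness scheme for Morse flow trees in the spirit of Fukaya--Oh \cite{fukaya1997zero}, then isolate and resolve the new phenomena introduced by the folded structure. Given a sequence $(G_*^n, \gamma^n)$ in $\mathcal{M}^\chi(\boldsymbol{q}_1,\dots,\boldsymbol{q}_m;\boldsymbol{q}_0)$, I would first pass to a subsequence in which the underlying folded ribbon tree $T_*^n = (T^n, \sigma^n, l^n)$ stabilizes combinatorially: the tree shape $T$ and the $\mathfrak{S}_\kappa$-labeling $\sigma$ take values in finite sets and may be assumed constant, so only the length function $l^n$ varies. Each edge then falls into one of three types according to whether its length converges in $(0,\infty)$, tends to $0$, or tends to $\infty$.

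On the open stratum where every edge length stays bounded away from $0$ and $\infty$, assumption \ref{condition-A} provides a uniform $C^0$ bound. Each restriction $\gamma^n|_{\tilde e}$ solves the gradient ODE $\dot\gamma = \nabla_g(f_{i_r,h_r} - f_{i_l,h_l})$, with $g$ depending continuously on $T_*$ by the choice of perturbation data, so ODE continuous dependence together with Arzel\`a--Ascoli yields a $C^\infty$ subsequential limit of the same tree type, and the vertex constraints pass to the limit by continuity. When $l^n(e) \to \infty$ on an internal stem edge $e \in E_{inn}\cap E_{ste}$, the compatibility clause of Definition \ref{definition-morse-datum} forces $g \to g_0$ on the expanding neck, and the usual Morse breaking argument splits the limit into a pair of folded Morse flow trees in $\mathcal{M}^{\chi_1} \times \mathcal{M}^{\chi_2}$ with $m_1 + m_2 = m + 1$ and $\chi_1 + \chi_2 = \chi + \kappa$, joined along an intermediate critical tuple. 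When $l^n(e) \to 0$ on any edge, the two endpoints collide into a single vertex of higher valence, producing a folded ribbon tree of smaller combinatorial complexity whose moduli space has strictly smaller dimension by Lemma \ref{lemma-trans} for a generic perturbation datum.

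The hardest step, and the essential novelty, is controlling \emph{marginal} edges. A marginal edge $e \in E_{mar}$ carries $\sigma(e) \neq \mathrm{id}$, so the relevant difference $f_{i_r,h_r} - f_{i_l,h_l}$ is a genuine nonzero Morse function, yet $e$ has no prescribed critical-point boundary condition. If $l^n(e) \to \infty$ on such an edge while $\gamma^n(\tilde e) \subset K$, monotone decrease of the Morse function along the gradient flow together with the compact $C^0$ bound forces the flow to approach a critical point of $f_{i_r,h_r} - f_{i_l,h_l}$ at each end of the lengthening segment; the limit is therefore a broken configuration in which the marginal edge is cut at an intermediate critical point, producing an additional boundary stratum not present in the unfolded theory. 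I would include these marginal splittings as explicit strata of the compactification and then use a generic-perturbation argument parallel to Lemma \ref{lemma-trans} to show that each such stratum has codimension at least one. Combined with the standard strata described above, the dimension hypothesis $|\boldsymbol{q}_0| - \sum|\boldsymbol{q}_i| + m - 2 \leq 1$ restricts the compactification to codimension $0$ and $1$, yielding a compact space of the desired form. I expect the main technical burden to lie in verifying the codimension bound for marginal splittings and in ruling out simultaneous degenerations of several marginal edges constrained by the cocycle condition $\sigma(e_{v,0}) = \sigma(e_{v,|v|-1}) \cdots \sigma(e_{v,1})$ at each internal vertex.
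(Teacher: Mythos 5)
Your proposal has a genuine gap in the treatment of marginal edges with $l^n(e)\to\infty$, which is exactly the key point the folded structure forces you to confront. You claim the flow along a lengthening marginal edge must approach critical points of $f_{i_r,h_r}-f_{i_l,h_l}$ and then break there, adding "an additional boundary stratum not present in the unfolded theory." But for a marginal edge $e\in E_{mar}$, both sides of $e$ lie in the same component of $D^2\setminus E_{ste}$, so $i_l(e)=i_r(e)=:i$; the relevant gradient function on a preimage $\tilde e$ is therefore $f_{i,h_r(\tilde e)}-f_{i,h_l(\tilde e)}$, a difference of functions \emph{within a single tuple} $\boldsymbol{f}_i$. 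The very definition of objects in $\operatorname{Morse}_\kappa(S)$ requires $df_{i,j}-df_{i,k}$ to be nonvanishing for $j\neq k$, so this function has no critical points at all. The paper uses precisely this to rule out such degenerations entirely (together with assumption \ref{condition-A}): there is nothing to break at, so a marginal edge cannot become infinitely long. Your approach would produce a stratum that does not exist, and, more seriously, it has no mechanism at all to control a lengthening marginal edge once you recognize there is no critical point for it to converge to.

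Two further differences from the paper's argument are worth flagging. First, for $l^n(e)\to 0$ you assert the collision produces a lower-dimensional stratum and leave it there; the paper instead shows that in most subcases the one-parameter family can actually be \emph{continued} across the collision (with a conjugation of the permutation when a marginal exterior edge collapses, see Figure \ref{fig-cpt-mar}), and rules out the remaining subcase ($e'\in E_{mar}$ with $\sigma(e')=\mathrm{id}$) by the action inequality over closed loops rather than by a dimension count. This distinction matters: for the $A_\infty$ relations you need the boundary of the compactified $1$-dimensional moduli space to consist only of stem-edge breakings, so you must show the short-edge collapses are interior points, not boundary points of the compactification. Second, the constraint that at most one edge length can degenerate at a time is not something you can hope to extract from the cocycle condition alone; it is the content of Lemma \ref{lemma-no-diagonal} in Appendix \ref{section-appendix-cpt}, which requires the blow-up analysis of collapsing subgraphs and is where the dimension restriction $n=2$ enters. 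Your plan gestures at this but does not supply the argument.
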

\begin{proof}
    By assumption, $\operatorname{dim}\mathcal{M}^\chi(\boldsymbol{q}_1,\dots,\boldsymbol{q}_m;\boldsymbol{q}_0)\leq 1$.
    Given a sequence
    \begin{equation*}
        \{(G_{*1},\gamma_1),(G_{*2},\gamma_2),\dots\}\subset\mathcal{M}^\chi(\boldsymbol{q}_1,\dots,\boldsymbol{q}_m;\boldsymbol{q}_0),
    \end{equation*}
    we consider all possible degenerations.
    Denote the underlying sequence of folded ribbon trees by $\{(T_i,\sigma_i,l_i)\}$.
    For notation simplicity, suppose we have already taken a subsequence of $\{(G_{*i},\gamma_i)\}$ so that $\{T_i\}$ all have the same topology, i.e., $T_i=T$, and $\{\sigma_i\}$ are all the same.
    We still denote the subsequence by $\{(G_{*i},\gamma_i)\}$. 

    \begin{figure}[ht]
        \centering
        \includegraphics[width=12cm]{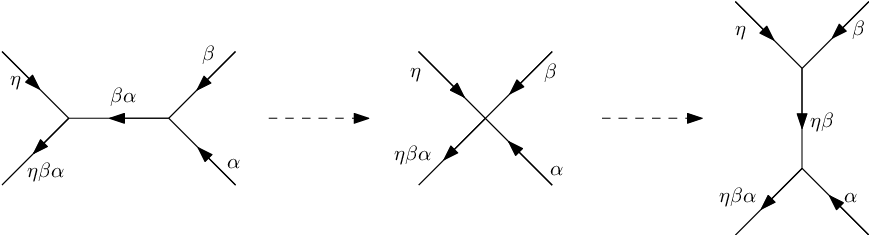}
        \caption{As the length of an inner edge approaches 0, the family of graphs can be continued. The permutations are labeled.}
        \label{fig-cpt-inner}
    \end{figure}

    By Lemma \ref{lemma-no-diagonal}, there can be at most one edge whose length approaches 0 as $i\to\infty$. 
    Suppose this edge exists and is denoted by $e$.

    If $l_i(e)\to 0$ as $i\to\infty$ for some inner edge $e\in E_{inn}$, we refer to Figure \ref{fig-cpt-inner} for this case, where the horizontal edge on the left side represents $e$. 
    Denote the vertical edge on the right side by $e'$. 
    If $e'\in E_{ste}$ or $\sigma_i(e')=\eta\beta\neq1$, then the right side continues the family on the left side. 
    If $e'\in E_{mar}$ and $\sigma_i(e')=\eta\beta=1$, and the left family degenerates to the middle one, then since no adjacent vertex of $E_{\succeq e'}$ belongs to $V_{ext}$,
    \begin{equation}
        A_0\coloneqq\sum_{\pi(\tilde{e})\in E_{\succeq e'}}\int_{\tilde{e}} \gamma^*(df_{i_r(\tilde{e}),h_r(\tilde{e})}-df_{i_l(\tilde{e}),h_l(\tilde{e})})=0,
    \end{equation}
    as the above integral is over closed loops.
    However, $A_0\geq0$ by definition, where the equality holds if and only if $\gamma$ is constant on $\pi^{-1}(E_{\succeq e'})$. 
    Therefore, if $e'\in E_{mar}$ and $\sigma_i(e')=\eta\beta=1$, the left family cannot degenerate to the middle one.

    \begin{figure}[ht]
        \centering
        \includegraphics[width=10cm]{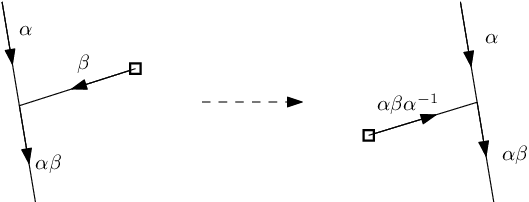}
        \caption{As the length of a marginal exterior edge approaches 0, the family of graphs can be continued. The permutations are labeled.}
        \label{fig-cpt-mar}
    \end{figure}
    
    If $l_i(e)\to 0$ as $i\to\infty$ for some $e\in E_{ext}\cap E_{mar}$, then the degeneration can be continued, where $e$ is transformed to another $e'\in E_{ext}\cap E_{mar}$ and the associated permutation changes by conjugation. See Figure \ref{fig-cpt-mar}. 

    If $l_i(e)\to \infty$ as $i\to\infty$ for some $e\in E_{mar}$, then either $f_{i_r(\tilde{e}),h_r(\tilde{e})}-f_{i_l(\tilde{e}),h_l(\tilde{e})}$ has a critical point where $i_r(\tilde{e})=i_l(\tilde{e})$ and $\tilde{e}\in\pi^{-1}(e)$, or the image $\gamma_i(\tilde{e})\subset S$ is noncompact as $i\to\infty$. 
    The former case is excluded by definition, and the latter case is excluded by assumption \ref{condition-A}.

    If $l_i(e)\to \infty$ as $i\to\infty$ for some $e\in E_{ste}$, then either $e$ limits to two broken edges, or the image $\gamma_i(\tilde{e})\subset S$ is noncompact as $i\to\infty$ for some $\tilde{e}\in\pi^{-1}(e)$. 
    The later case is excluded by assumption \ref{condition-A}.
    Therefore, $e$ limits to two broken edges, which corresponds to the $A_\infty$-relations. 

\end{proof}

Now for each $m\geq1$, we can define the higher composition map by
\begin{gather*}
    \mu^m\colon \operatorname{hom}(\boldsymbol{f}_{m-1},\boldsymbol{f}_{m})\otimes\dots\otimes\operatorname{hom}(\boldsymbol{f}_{0},\boldsymbol{f}_{1})\to\operatorname{hom}(\boldsymbol{f}_{0},\boldsymbol{f}_{m}),\\
    \mu^m(\boldsymbol{q}_m,\dots,\boldsymbol{q}_1)\coloneqq \sum_{\substack{|\boldsymbol{q}_0|=|\boldsymbol{q}_1|+\dots+|\boldsymbol{q}_m|+2-m,\\\chi\leq\kappa}} \#\mathcal{M}^{\chi}(\boldsymbol{q}_1,\dots,\boldsymbol{q}_m;\boldsymbol{q}_0).
\end{gather*}

\begin{proposition}
    Under assumption \ref{condition-A} and fixing a generic choice of perturbation data, $\{\mu^m,m\geq1\}$ defines an $A_\infty$ structure on $\operatorname{Morse}_\kappa(S)$.
\end{proposition}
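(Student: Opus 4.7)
The plan is to verify the $A_\infty$-relations
\begin{equation*}
    \sum_{\substack{m_1+m_2=m+1,\\ 0\le n\le m-m_2}} \mu^{m_1}\bigl(\boldsymbol{q}_m,\dots,\boldsymbol{q}_{n+m_2+1},\mu^{m_2}(\boldsymbol{q}_{n+m_2},\dots,\boldsymbol{q}_{n+1}),\boldsymbol{q}_n,\dots,\boldsymbol{q}_1\bigr)=0
\end{equation*}
by the standard boundary-of-moduli-space argument. Fix $\boldsymbol{q}_0$ with $|\boldsymbol{q}_0|=|\boldsymbol{q}_1|+\dots+|\boldsymbol{q}_m|+3-m$, so that by Lemma \ref{lemma-trans} the moduli space $\mathcal{M}^\chi(\boldsymbol{q}_1,\dots,\boldsymbol{q}_m;\boldsymbol{q}_0)$ is a smooth $1$-manifold, and by assumption \ref{condition-A} and Lemma \ref{lemma-cpt} it admits a compactification $\overline{\mathcal{M}^\chi}$. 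The strategy is to identify the ends of $\overline{\mathcal{M}^\chi}$ corresponding to stem-edge breakings with the terms on the left-hand side, and to show that the remaining ends (collapsing inner or marginal exterior edges) cancel in pairs over $\mathbb{Z}_2\llbracket\hbar\rrbracket$.

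The degeneration analysis in the proof of Lemma \ref{lemma-cpt} enumerates the possible limits. The only genuine boundary contributions come from the length of a stem edge $e\in E_{inn}\cap E_{ste}$ going to infinity: then $T$ breaks into $(T_1,T_2)\in\mathcal{T}^{m_1,\chi_1}\times\mathcal{T}^{m_2,\chi_2}$ with $m_1+m_2=m+1$ and $\chi_1+\chi_2=\chi+\kappa$, and the flow tree splits as a broken pair matching at a common asymptotic $\boldsymbol{q}'\in\operatorname{hom}(\boldsymbol{f}_n,\boldsymbol{f}_{n+m_2})$. By the boundary compatibility clause of Definition \ref{definition-morse-datum}, the perturbation data restricts to $g_0$ on the breaking edge, so each broken pair lies in $\mathcal{M}^{\chi_2}(\boldsymbol{q}_{n+1},\dots;\boldsymbol{q}')\times\mathcal{M}^{\chi_1}(\dots,\boldsymbol{q}',\dots;\boldsymbol{q}_0)$. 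Summing over all breakings and over $\chi\leq\kappa$ yields exactly the left-hand side of the $A_\infty$-relation; the Euler-characteristic additivity $\chi_1+\chi_2=\chi+\kappa$ matches the $\hbar$-grading convention implicit in $\mu^m$, which uses the cutoff $\chi\leq\kappa$.

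It remains to show that the other ends pair up. Ends coming from $l_i(e)\to\infty$ for a marginal edge are excluded: as noted in the proof of Lemma \ref{lemma-cpt}, by assumption \ref{condition-A} the images stay in a compact $K$, and $f_{i_r,h_r}-f_{i_l,h_l}$ has no critical points when $i_r=i_l$ because $d\boldsymbol{f}_i$ has no self-intersection. For collapsing inner edges $l_i(e)\to 0$ with $e\in E_{inn}$, Figure \ref{fig-cpt-inner} shows that the limit extends smoothly to an adjacent stratum corresponding to a different trivalent resolution of the resulting quadrivalent vertex (or is excluded entirely when the forbidden configuration $\sigma(e')=1$ would arise on a closed loop); the continuous dependence of the perturbation data from Definition \ref{definition-morse-datum} ensures that the two resolutions glue into a common neighborhood in an enlarged $1$-manifold. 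For collapsing marginal exterior edges $l_i(e)\to 0$ with $e\in E_{ext}\cap E_{mar}$, Figure \ref{fig-cpt-mar} similarly exhibits a gluing in which $e$ is exchanged with a neighbouring marginal exterior edge after conjugating $\sigma$. In both cases the configurations are interior points of the stratified $1$-manifold obtained by gluing along these codimension-one walls, so they contribute nothing to the boundary count.

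The main technical obstacle is promoting the continuous matching in Lemma \ref{lemma-cpt} to a genuine smooth chart on the enlarged $1$-manifold near each collapse, which amounts to a Morse-theoretic gluing theorem for folded flow trees at an interior collapsed edge and at a collapsed marginal exterior edge. This generalizes the gluing for Morse trees in \cite{fukaya1997zero} and is achieved by the usual implicit-function/Newton-iteration argument, made possible by the Fredholm setup underlying Lemma \ref{lemma-trans} and the convexity of $\mathfrak{M}$ used to arrange compatibility of perturbation data across walls. Once these gluing results are in place, working over $\mathbb{Z}_2\llbracket\hbar\rrbracket$ removes all sign considerations, and counting boundary points of $\overline{\mathcal{M}^\chi}$ modulo $2$ gives the desired quadratic $A_\infty$-relation order by order in $\hbar$.
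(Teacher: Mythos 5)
Your proposal is correct and follows essentially the same route as the paper, which simply cites Lemma \ref{lemma-trans} for the dimension formula and Lemma \ref{lemma-cpt} for the degeneration analysis; your write-up unpacks the boundary-of-$1$-dimensional-moduli-space argument that those lemmas encode. One small clarification: the collapsing inner- and marginal-exterior-edge configurations are not ends that cancel in pairs but (as you correctly conclude later) interior points through which the family continues within the same $\chi$, so they never appear in the boundary count in the first place — the initial ``cancel in pairs over $\mathbb{Z}_2\llbracket\hbar\rrbracket$'' phrasing is slightly misleading even though the final conclusion is right.
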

\begin{proof}
    This follows by Lemma \ref{lemma-trans} and Lemma \ref{lemma-cpt}.
\end{proof}

% \begin{figure}
%     \centering
%     \includegraphics[width=5cm]{figures/d2.eps}
%     \caption{}
%     \label{fig-d2}
% \end{figure}

% \subsection{Invariance}
% \textcolor{note}{Invariance under perturbation of $\boldsymbol{f}$ and the metric on $M$. Maybe not necessary.}

\begin{figure}[ht]
    \centering
    \includegraphics[width=6cm]{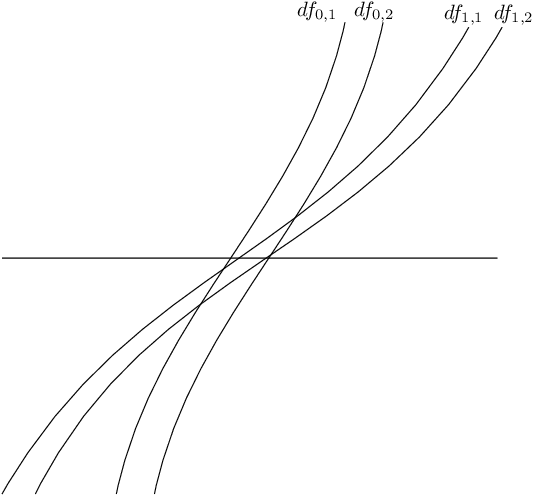}
    \caption{Wrapped cotangent fibers. View $T^*\mathbb{R}^2$ in the projection to $T^*\mathbb{R}_{x_1}$.}
    \label{fig-fiber}
\end{figure}

\subsection{The 2-plane}
\label{section-disk}

From now on, we restrict to $S=\mathbb{R}^2_{x_1,x_2}$. 
Denote by $\mathfrak{M}^R_{\mathbb{R}^2}$ the set of Riemann metrics on $\mathbb{R}^2$ that coincide with the Euclidean metric on $\mathbb{R}^2\backslash D_{R}$ for some large $R>0$, where $D_{R}=\{x_1^2+x_2^2<R^2\}\subset\mathbb{R}^2$.

Pick $\kappa$ disjoint cotangent fibers $\sqcup_{i=1}^\kappa T^*_{q_i}\mathbb{R}^2 \subset T^*\mathbb{R}^2$ where $\{q_1,\dots,q_\kappa\}\subset D_{R/2}$.
By wrapping $\sqcup_{i=1}^\kappa T^*_{q_i}\mathbb{R}^2$ in the positive Reeb direction using some Hamiltonian function which is quadratic at infinity, we get $\boldsymbol{L}_0=\{L_{0,1},\dots,L_{0,\kappa}\}$. 
We denote the positive wrapping by $\sqcup_{i=1}^\kappa T^*_{q_i}\mathbb{R}^2\rightsquigarrow\boldsymbol{L}_0$.
By continuing the wrapping, we get a sequence
\begin{equation}
    \sqcup_{i=1}^\kappa T^*_{q_i}\mathbb{R}^2\rightsquigarrow\boldsymbol{L}_0\rightsquigarrow\boldsymbol{L}_1\rightsquigarrow\boldsymbol{L}_2\rightsquigarrow\dots,
\end{equation}
where each $L_{i,j}$ is required to be graphical, allowing us to view $L_{i,j}$ as the differential $df_{i,j}$.
We further require that 
\begin{enumerate}
    \item[(C1)] The difference $df_{i,j}-df_{i,k}$ is constant on $\mathbb{R}^2\backslash D_{R}$ with respect to the Euclidean metric $g_0$ on $\mathbb{R}^2$;\label{C1}

    \item[(C2)] The absolute angle between $\nabla_{g_0}(f_{i,j}-f_{i',k})$ and $\partial/{\partial r}$ is less than $\pi/4$ on $\mathbb{R}^2\backslash D_{R}$, where $i<i'$;\label{C2}

    \item[(C3)] $||\nabla_{g_0}(f_{i,j}-f_{i',k})||\gg||\nabla_{g_0}(f_{i'',r}-f_{i'',s})||$ for all $i\neq i'$ on $\mathbb{R}^2\backslash D_{R}$.\label{C3}
\end{enumerate}
The above conditions can be easily achieved by modifying the wrapping functions.

Let $\boldsymbol{q}_i\in \operatorname{hom}(\boldsymbol{f}_{i-1},\boldsymbol{f}_{i})$ for $i=1,\dots,m$ and $\boldsymbol{q}_0\in \operatorname{hom}(\boldsymbol{f}_{0},\boldsymbol{f}_{m})$, where $\boldsymbol{f}_i=(f_{i,1},\dots,f_{i,\kappa})$.

\begin{lemma}
\label{lemma-disk}
    Fixing a generic choice of perturbation data, $\mathcal{M}(\boldsymbol{q}_1,\dots,\boldsymbol{q}_m;\boldsymbol{q}_0)$ is a smooth manifold of dimension
    \begin{equation*}
        |\boldsymbol{q}_0|-|\boldsymbol{q}_1|-\dots-|\boldsymbol{q}_m|+m-2.
    \end{equation*}
    When the dimension is less or equal to $1$, $\mathcal{M}(\boldsymbol{q}_1,\dots,\boldsymbol{q}_m;\boldsymbol{q}_0)$ admits a compactification.
\end{lemma}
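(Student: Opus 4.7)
The dimension formula and smoothness follow from Lemma \ref{lemma-trans} applied with $S=\mathbb{R}^2$: the Fredholm theory and Sard--Smale argument there never use compactness of $S$, only that the perturbation data vary in a convex set, which is the role of $\mathfrak{M}^R_{\mathbb{R}^2}$ here. So the substantive content of the lemma is the compactification, and by Lemma \ref{lemma-cpt} it suffices to verify the $C^0$-boundedness assumption \ref{condition-A}: the image $\gamma(G)$ of every element of the moduli space lies in a fixed compact $K\subset\mathbb{R}^2$.

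My plan is to take $K=\overline{D_{R'}}$ for $R'\gg R$ chosen so that every critical point of every $f_{i,j}-f_{i',k}$ appearing in the inputs $\boldsymbol{q}_0,\dots,\boldsymbol{q}_m$ lies in $D_{R/2}$. The verification splits according to whether an edge $\tilde e\subset G$ projects to a stem or a marginal edge of $T$.

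For stem edges, condition (C2) says that $g := f_{i_r,h_r}-f_{i_l,h_l}$ satisfies $\angle(\nabla_{g_0}g,\partial/\partial r)<\pi/4$ on $\mathbb{R}^2\setminus D_R$, so $\tfrac{d}{dt}r(\gamma(t))=\langle\nabla_{g_0}g,\partial/\partial r\rangle>0$ there; the radial coordinate along any stem trajectory is strictly increasing once it leaves $D_R$. Since each stem edge either has both endpoints at critical points inside $D_{R/2}$ (for inner edges) or converges to such a critical point as $t\to\pm\infty$ (for external edges), the trajectory cannot leave $D_R$ at all. Hence the stem part of $\gamma(G)$ is contained in $\overline{D_R}\subset\overline{D_{R'}}$.

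Marginal edges are then controlled by (C1) together with the action inequality. By (C1), along a marginal edge the gradient $\nabla_{g_0}(f_{i,j}-f_{i,k})$ is a fixed nonzero constant on $\mathbb{R}^2\setminus D_R$, and by (C3) its norm is much smaller than the stem gradients; outside $D_R$ any marginal sub-trajectory is therefore a straight line traversed at a fixed positive speed. Both endpoints of a marginal edge attach to stem edges and hence lie in $\overline{D_R}$, and the action identity of the displayed equation yields
\begin{equation*}
    \sum_{\tilde e\in\tilde E}\int_{\tilde e}\|\nabla_{g_0}(f_{i_r,h_r}-f_{i_l,h_l})\|^2\,dt \;=\; \sum_{i=1}^m\mathcal{A}(\boldsymbol{q}_i)-\mathcal{A}(\boldsymbol{q}_0),
\end{equation*}
a fixed finite constant depending only on the chosen inputs. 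In particular the total marginal length spent outside $D_R$ is uniformly bounded, so $\gamma(\tilde e)\subset\overline{D_{R'}}$ once $R'$ is taken large enough. The main obstacle is the degeneration $l(\pi(\tilde e))\to\infty$ of a marginal edge in a limit sequence; to rule out escape to infinity I would reuse the dichotomy from the proof of Lemma \ref{lemma-cpt}, namely that such a sequence either produces a new critical point of $f_{i,j}-f_{i,k}$ (excluded by the no-self-intersection hypothesis on objects of $\operatorname{Morse}_\kappa(\mathbb{R}^2)$) or forces $\gamma_i$ to escape to infinity along that edge, which is now excluded by the uniform action bound just established. This verifies \ref{condition-A} and, via Lemma \ref{lemma-cpt}, yields the claimed compactification.
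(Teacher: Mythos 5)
The paper's proof is a clean, global maximum-principle argument: assume the claim fails and let $r=\max_{x\in G}\lVert\gamma(x)\rVert>R$, attained at some $p\in G$. If $p$ is in the interior of a stem edge, (C2) forces $\lVert\gamma\rVert$ to be strictly monotone near $p$, so the max cannot be interior; if $p$ is in the interior of a marginal edge, (C1) makes the trajectory a straight line (with nonzero speed), and strict convexity of $t\mapsto\lVert\gamma(t)\rVert^2$ again rules out an interior max; if $p$ is a vertex, the Kirchhoff relation $\sum\pm\nabla g_i=0$ at inner vertices implies at least one adjacent edge has increasing $\lVert\gamma\rVert$ away from $p$. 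Every case contradicts maximality, so in fact $\gamma(G)\subset D_R$.

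Your proposal reaches for essentially the same estimates (C1), (C2) on the two edge types, but instead of localizing at a maximum point you try to trace trajectories from the boundary data and supplement this with an action bound. Two steps in your argument do not go through as written. First, you assert that inner stem edges have both endpoints at critical points in $D_{R/2}$. This is false: inner stem edges join inner vertices of $G$, and inner vertices are mapped to branch points of the flow graph, not to critical points; only external vertices carry asymptotic conditions at the $\boldsymbol{q}_i$. Your monotonicity-of-$r$ observation from (C2) is correct, but without control of where the inner vertices map you cannot conclude from it that stem trajectories stay in $D_R$ --- you have effectively assumed the conclusion on the segments bordered by inner vertices. Second, and relatedly, you never address the possibility that the overall maximum of $\lVert\gamma\rVert$ is achieved at a vertex; this is precisely the case the paper dispatches with the Kirchhoff law, and it is not covered by edge-interior monotonicity. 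Your marginal-edge argument inherits both problems: you take it as given that the endpoints of each marginal edge lie in $\overline{D_R}$ because they ``attach to stem edges,'' which is both circular (you have not yet bounded the stems) and structurally inaccurate (marginal subtrees can be chains of marginal edges, so a marginal edge may share no vertex with any stem edge). The action identity does give a uniform bound on $\sum_{\tilde e}\int_{\tilde e}\lVert\nabla g\rVert^2\,dt$ and hence, via (C1), a bound on total marginal length, which is a usable observation --- but it only yields $\gamma(G)\subset\overline{D_{R'}}$ for some $R'>R$ contingent on the stem bound, and it is extra machinery where the paper's pointwise maximum principle already gives $\gamma(G)\subset D_R$ directly with no case analysis along the tree. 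The fix is to replace the trajectory-tracing argument with the contradiction at an extremal point, and to add the Kirchhoff step to handle the vertex case.
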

\begin{proof}
    By Lemma \ref{lemma-trans} and Lemma \ref{lemma-cpt}, it suffices to check that assumption \ref{condition-A} is satisfied.

    \begin{figure}[ht]
        \centering
        \includegraphics[width=5cm]{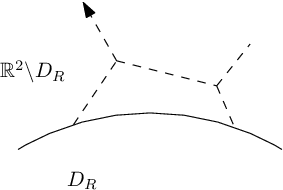}
        \caption{If the image of $\tilde{E}_{mar}$ meets $\mathbb{R}^2\backslash \overline{D}_{R}$, then it cannot be bounded.}
        \label{fig-cpt-disk}
    \end{figure}
    
    Suppose $(G_*,\gamma)\in\mathcal{M}(\boldsymbol{q}_1,\dots,\boldsymbol{q}_m,\boldsymbol{q}_0)$.
    We claim that $\gamma(G)\cap \mathbb{R}^2\backslash \overline{D}_{R}=\emptyset$.
    If not, let $r\coloneqq\operatorname{max}_{x\in G}||\gamma(x)||>R$, where $||\cdot||$ is the Euclidean norm on $\mathbb{R}^2$. 
    There exists $p\in G$ such that $||\gamma(p)||=r$.
    
    If $p\in \operatorname{int}(\tilde{e})$ for $\tilde{e}\in\tilde{E}_{ste}$, then by (C2), we have $\operatorname{max}_{x\in \tilde{e}}||\gamma(x)||>r$.

    If $p\in\tilde{e}$ for $\tilde{e}\in\operatorname{int}(\tilde{E}_{mar})$, then by (C1), we have $\operatorname{max}_{x\in \tilde{e}}||\gamma(x)||>r$.

    If $p\in\tilde{V}$, since the edges adjacent to $p$ satisfy the Kirchhoff laws, there must be one of these edges $\tilde{e}$ such that $\operatorname{max}_{x\in \tilde{e}}||\gamma(x)||>r$.

    All of these cases contradict the definition of $r$.
    Therefore, $\gamma(G)\subset D_{R}$, and hence assumption \ref{condition-A} is satisfied.

    % We first observe that if $\tilde{e}\in \tilde{E}_{ste}$ then $\gamma(\tilde{e})\cap D\backslash D_{1-\epsilon}=\emptyset$: If not, by Condition (C2) and (C3) above, there exists $\tilde{e}'\in\tilde{E}_{ste}$ so that $\gamma\colon\tilde{e}'\to D$ meets $\partial D$ and points outwards $D$, which is impossible.

    % If $\tilde{e}\in \tilde{E}_{mar}$, by Condition (C1), $\gamma(\tilde{e})\cap D\backslash D_{1-\epsilon}$ is a straight arc.
    % Since only the image of $\tilde{E}_{mar}$ can meet $D\backslash D_{1-\epsilon}$, there exists $\tilde{e}'\in\tilde{E}_{mar}$ so that $\gamma\colon\tilde{e}'\to D$ meets $\partial D$ and points outwards $D$, which is impossible.

\end{proof}

Note that we can arrange the Lagrangians so that $|\boldsymbol{q}|=0$ for all $\boldsymbol{q}\in\operatorname{hom}(\boldsymbol{f}_i,\boldsymbol{f}_j)$, $i<j$. 
In this case, $\mathcal{M}(\boldsymbol{q}_1,\dots,\boldsymbol{q}_m;\boldsymbol{q}_0)$ is of dimension 0 if and only if $m=2$.
It follows that
\begin{corollary}
    The product map
    \begin{equation*}
        \mu^2\colon \operatorname{hom}(\boldsymbol{f}_1,\boldsymbol{f}_2)\otimes \operatorname{hom}(\boldsymbol{f}_0,\boldsymbol{f}_1)\to\operatorname{hom}(\boldsymbol{f}_0,\boldsymbol{f}_2)
    \end{equation*}
    defines an ordinary algebra $\operatorname{End}_{Mor}(\sqcup_{i=1}^\kappa T^*_{q_i}\mathbb{R}^2)$.
\end{corollary}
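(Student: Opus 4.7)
The strategy is to deduce the corollary as a formal consequence of the preceding proposition, which already establishes the $A_\infty$-structure $\{\mu^m\}_{m\geq 1}$ on $\operatorname{Morse}_\kappa(\mathbb{R}^2)$. The only thing to verify is that, under the grading hypothesis recalled just above the corollary, all compositions $\mu^m$ with $m\neq 2$ vanish for degree reasons, so that the $A_\infty$-relation at order $3$ collapses to the strict associativity of $\mu^2$.

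First I would invoke Lemma \ref{lemma-disk}, which gives the virtual dimension of $\mathcal{M}(\boldsymbol{q}_1,\dots,\boldsymbol{q}_m;\boldsymbol{q}_0)$ as $|\boldsymbol{q}_0|-\sum_{i=1}^m|\boldsymbol{q}_i|+m-2$. Since $\mu^m$ counts rigid (zero-dimensional) configurations, it has degree $2-m$ on hom spaces. By hypothesis we have arranged that $|\boldsymbol{q}|=0$ for every $\boldsymbol{q}\in\operatorname{hom}(\boldsymbol{f}_i,\boldsymbol{f}_j)$ with $i<j$, and all of the morphism spaces entering $\mu^m$ are precisely of this form ($\boldsymbol{q}_i\in\operatorname{hom}(\boldsymbol{f}_{i-1},\boldsymbol{f}_i)$ and $\boldsymbol{q}_0\in\operatorname{hom}(\boldsymbol{f}_0,\boldsymbol{f}_m)$). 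So the grading balance $|\boldsymbol{q}_0|=\sum_i|\boldsymbol{q}_i|+2-m$ forces $m=2$: for $m=1$ it requires $|\boldsymbol{q}_0|=|\boldsymbol{q}_1|+1$, impossible with all generators in degree $0$; for $m\geq 3$ it requires $|\boldsymbol{q}_0|<0$, equally impossible. Hence $\mu^1=0$ and $\mu^m=0$ for every $m\geq 3$.

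Substituting these vanishings into the $A_\infty$-relation at order $m=3$ kills every term except $\mu^2\circ(\mu^2\otimes\mathrm{id})$ and $\mu^2\circ(\mathrm{id}\otimes\mu^2)$, and since we are working over $\mathbb{Z}_2\llbracket\hbar\rrbracket$ no signs intervene. The relation therefore reads
\[
\mu^2(\mu^2(\boldsymbol{q}_3,\boldsymbol{q}_2),\boldsymbol{q}_1)+\mu^2(\boldsymbol{q}_3,\mu^2(\boldsymbol{q}_2,\boldsymbol{q}_1))=0,
\]
which is precisely the associativity of $\mu^2$. Assembling the various hom spaces $\operatorname{hom}(\boldsymbol{f}_i,\boldsymbol{f}_j)$ coming from the wrapping sequence $\sqcup_iT^*_{q_i}\mathbb{R}^2\rightsquigarrow\boldsymbol{L}_0\rightsquigarrow\boldsymbol{L}_1\rightsquigarrow\cdots$ in the standard colimit-of-wrappings fashion then delivers the asserted ordinary associative algebra structure on $\operatorname{End}_{Mor}(\sqcup_{i=1}^\kappa T^*_{q_i}\mathbb{R}^2)$.

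No step presents a genuine obstacle: all analytic content (transversality, compactness of the relevant one-dimensional moduli, and the $A_\infty$-identities themselves) has been packaged into Lemmas \ref{lemma-trans}, \ref{lemma-cpt}, \ref{lemma-disk} and the preceding proposition. The only point that deserves an explicit check is the grading uniformity, namely that every intersection generator between any two wrappings $\boldsymbol{f}_i\rightsquigarrow\boldsymbol{f}_j$ in the chain can in fact be placed in Morse co-index zero simultaneously; this is arranged by the choice of wrapping Hamiltonian together with conditions (C1)--(C3) and should be verified once the perturbation is fixed.
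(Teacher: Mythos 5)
Your argument is correct and is essentially the paper's own: the text preceding the corollary observes that with all generators placed in degree zero, the moduli space dimension $|\boldsymbol{q}_0|-\sum_i|\boldsymbol{q}_i|+m-2=m-2$ vanishes iff $m=2$, so $\mu^m=0$ for $m\neq 2$ and the $A_\infty$-relation at order three degenerates to associativity. You spell out the $m=1$ and $m\geq 3$ cases and the use of the $A_\infty$-identity more explicitly than the paper does, but the reasoning is the same.
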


\section{Equivalence to higher-dimensional Heegaard Floer homology}

\subsection{Review of higher-dimensional Heegaard Floer homology (HDHF)}
\label{section-hdhf}

Let $(X,\alpha)$ be a $2n$-dimensional completed Liouville domain and $\omega=d\alpha$ be the exact symplectic form on $X$.
The objects of the $A_\infty$-category $\mathcal{F}_\kappa(X)$ are $\kappa$-tuples of disjoint exact Lagrangians.
Given two objects $\boldsymbol{L}_i=(L_{i,1},\dots,L_{i,\kappa})$, $i=0,1$, whose components are mutually transverse, the morphism $\mathrm{hom}_{\mathcal{F}_\kappa(X)}(\boldsymbol{L}_0,\boldsymbol{L}_1)=CF(\boldsymbol{L}_0,\boldsymbol{L}_1)$ is the free abelian group generated by all $\mathbf{y}=\{y_{1},\dots,y_{\kappa}\}$ where $y_{j}\in L_{0,j}\cap L_{1,\sigma(j)}$ and $\sigma$ is some permutation of $\{1,\dots,\kappa\}$. 
The coefficient ring is set to be $\mathbb{Z}_2\llbracket\hbar\rrbracket$. 

Let $D$ be the unit disk in $\mathbb{C}$ and $D_m=D-\{p_0,\dots,p_m\}$, where $p_i\in\partial D$ are boundary marked points arranged counterclockwise. Let $\partial_i D_m$ be the boundary arc from $p_i$ to $p_{i+1}$. 
Let $\mathcal{A}_m$ be the moduli space of $D_m$ modulo automorphisms; we choose representatives $D_m$ of equivalence classes of $\mathcal{A}_m$ in a smooth manner (e.g., by setting $p_0=-i$ and $p_1=i$) and abuse notation by writing $D_m\in \mathcal{A}_m$.
We call $D_m$ the ``$A_\infty$ base direction''.
See Figure \ref{fig-base}.

\begin{figure}[ht]
    \centering
    \includegraphics[width=5cm]{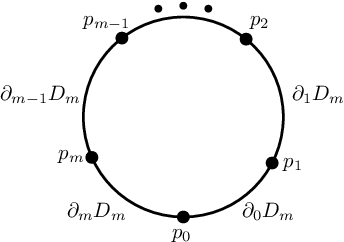}
    \caption{}
    \label{fig-base}
\end{figure}

The full ambient symplectic manifold is then $(D_m\times X,\,\Omega_m=\omega_m+\omega)$, where $\omega_m$ is an area form on $D_m$ which restricts to $ds_i\wedge dt_i$ on the strip-like end $e_i\simeq [0,\infty)_{s_i}\times[0,1]_{t_i}$ around $p_i$ for $i=1,\dots,m$ and $e_i\simeq (-\infty,0]_{s_i}\times[0,1]_{t_i}$ around $p_i$ for $i=0$. (As we approach the puncture $p_i$, $s_i\to +\infty$ for $i=1,\dots,m$ and $s_i\to -\infty$ for $i=0$.) Then we extend the Lagrangians to the $A_\infty$ base direction: for $i=0,\dots,m$, let $\tilde{L}_i=\partial_iD_m\times L_i$ and $\tilde{L}_{ij}=\partial_iD_m\times L_{ij}$. 
Let $\pi_X: D_m\times X\to X$ be the projection to $X$ and 
$\pi_{D_m}$ be the symplectic fibration
\begin{equation*}
    \pi_{D_m}\colon (D_m\times X,\Omega_m)\to (D_m,\omega_m).
\end{equation*}

% Denote the set of almost complex structure on $D_m\times X$ which is compatible with $\Omega_m$ by $\mathcal{J}(D_m\times X)$. 
Let $\mathcal{J}_{X,\alpha}$ be the set of $d\alpha$-compatible almost complex structures $J_X$ on $(X,\omega)$ that are asymptotic to an almost complex structure on $[0,\infty)_s\times \partial X^c$ that takes $\partial_s$ to the Reeb vector field of $\alpha|_{\partial X^c}$, takes $\ker \alpha|_{\partial X^c}$ to itself, and is compatible with $d\alpha|_{\partial X^c}$. % Also fix $J_X^0 \in\mathcal{J}_{X,\alpha}$ and let 
%$$\mathcal{J}_{X,\alpha}^\star:= \{J_X\in \mathcal{J}_{X,\alpha}~|~ \mbox{$J_X=J_X^0$ on some $[c',\infty)\times \partial X^c$}\}.$$ 

\begin{definition}
\label{definition-floer-data}
    There is a smooth assignment 
    $D_m\mapsto J_{D_m}$, where $D_m\in \mathcal{A}_m$, such that:
    \begin{enumerate}
        \item[(J1)] on each fiber $\pi_{D_m}^{-1}(p)=\{p\}\times X$, $J_{D_m}$ restricts to an element of $\mathcal{J}_{X,\alpha}$;
        \item[(J2)] $J_{D_m}$ projects holomorphically onto $D_m$;
        \item[(J3)] over each strip-like end $[0,\infty)_{s_i}\times[0,1]_{t_i}$, for $s_i$ sufficiently positive, or over $(-\infty,0]_{s_0}\times[0,1]_{t_0}$, for $s_0$ sufficiently negative, $J_{D_m}$ is invariant in the $s_i$-direction and takes $\partial_{s_i}$ to $\partial_{t_i}$; when $m=1$, $J_{D_1}$ is invariant under $\mathbb{R}$-translation of the base and takes $\partial_{s_i}$ to $\partial_{t_i}$.
    \end{enumerate}
    One can inductively construct such an assignment for all $m\geq1$ in a manner which is (A) consistent with the boundary strata 
    and (B) for which all the moduli spaces $\mathcal{R}({\bf y}_1,\dots, {\bf y}_m; {\bf y}_0)$, defined below, are transversely cut out. 
    A collection $\{ J_{D_m}~|~D_m\in \mathcal{A}_m,~m\in \mathbb{Z}_{>0}\}$ satisfying (A) will be called a {\em consistent collection} of almost complex structures; if it satisfies (B) in addition, it is a {\em sufficiently generic} consistent collection.
\end{definition}

% \begin{remark}  
%     To avoid cumbersome terminology, in what follows, when we say {\em sufficiently generic}, we mean that all the moduli spaces under consideration are transversely cut out.
% \end{remark}

Let $\mathcal{R}(\mathbf{y}_1,\dots,\mathbf{y}_m;\mathbf{y}_0)$ be the moduli space of maps
\begin{equation*}
    u\colon (\dot F,j)\to(D_m\times X,J_{D_m}),
\end{equation*}
where $(F,j)$ is a compact Riemann surface with boundary, $\mathbf{p}_0,\dots,\mathbf{p}_m$ are disjoint $\kappa$-tuples of boundary marked points of $F$, $\dot F=F\setminus\cup_i \mathbf{p}_i$, and $D_m\in \mathcal{A}_m$, so that $u$ satisfies
\begin{align}
    \label{floer-condition}
    \left\{
        \begin{array}{ll}
            \text{$du\circ j=J_{D_m}\circ du$;}\\
            \text{each component of $\partial \dot F$ is mapped to a unique $\tilde{L}_{ij}$;}\\
            \text{$\pi_X\circ u$ tends to $\mathbf{y}_i$ as $s_i\to+\infty$ for $i=1,\dots,m$;}\\
            \text{$\pi_X\circ u$ tends to $\mathbf{y}_0$ as $s_0\to-\infty$;}\\
            \text{$\pi_{D_m}\circ u$ is a $\kappa$-fold branched cover of $D_m$.}
        \end{array}
    \right.
\end{align}
With the identification of the strip-like end $e_i$, $i=1,\dots,m$, as $[0,\infty)_{s_i}\times[0,1]_{t_i}$, the third condition means that $u$ maps the neighborhoods of the punctures of $\mathbf{p}_i$ asymptotically to the Reeb chords $[0,1]_{t_i}\times \mathbf{y}_i$ as $s_i\to +\infty$. The fourth condition is similar. 

The $\mu^m$-composition map of $\mathcal{F}_\kappa(X)$ is then defined as
\begin{gather*}
\label{eq-m_2}
    \mu^m\colon \operatorname{hom}(\boldsymbol{L}_{m-1},\boldsymbol{L}_{m})\otimes\dots\otimes\operatorname{hom}(\boldsymbol{L}_{0},\boldsymbol{L}_{1})\to\operatorname{hom}(\boldsymbol{L}_{0},\boldsymbol{L}_{m}),\\
    \mu^m(\mathbf{y}_m,\dots,\mathbf{y}_1)=\sum_{\mathbf{y}_0,\chi\leq\kappa}\#\mathcal{R}^{\mathrm{ind}=0,\chi}(\mathbf{y}_1,\dots,\mathbf{y}_m;\mathbf{y}_0)\cdot\hbar^{\kappa-\chi}\cdot\mathbf{y}_0,
\end{gather*}
where the superscript $\chi$ denotes the Euler characteristic of $F$; the symbol $\#$ denotes the signed count of the corresponding moduli space.

\begin{proposition}
\label{prop-ind}
    The Fredholm index of $\mathcal{R}^\chi(\mathbf{y}_1,\dots,\mathbf{y}_m;\mathbf{y}_0)$ is
    \begin{equation}
        \label{eq-ind}
        \mathrm{ind}(u)=(n-2)\chi+\mu+2\kappa-m\kappa n+m-2,
    \end{equation}
    where $\mu$ is the Maslov index of $u$, defined as in \cite[Section 4]{colin2020applications}.
\end{proposition}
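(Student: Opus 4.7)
The strategy is to apply the standard Riemann--Roch index formula to the linearized Cauchy--Riemann operator $D_u\bar\partial$, exploiting the symplectic fibration $\pi_{D_m}\colon D_m\times X\to D_m$ and the $\kappa$-fold branched cover structure of the base projection $\pi_{D_m}\circ u$.

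The first step is to split the linearization. Condition (J2) guarantees that the vertical subbundle $\ker d\pi_{D_m}=u_X^*TX$ is $J_{D_m}$-invariant along $u$, so $D_u\bar\partial$ is block upper triangular with respect to the short exact sequence
\begin{equation*}
    0 \longrightarrow u_X^*TX \longrightarrow u^*T(D_m\times X) \longrightarrow u_D^*TD_m \longrightarrow 0,
\end{equation*}
and the Fredholm index is accordingly additive, $\mathrm{ind}(u)=\mathrm{ind}(D_u^{\mathrm{fib}})+\mathrm{ind}(D_u^{\mathrm{base}})$.

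The next step is the Riemann--Roch calculation for each summand. The fiber operator acts on $u_X^*TX$ with totally real boundary along the Lagrangians $L_{i,j}$ and transverse asymptotics at the tuples $\mathbf{y}_i$; its index is
\begin{equation*}
    \mathrm{ind}(D_u^{\mathrm{fib}})=n\chi(\dot F)+\mu=n\bigl(\chi-(m+1)\kappa\bigr)+\mu,
\end{equation*}
using $\chi(\dot F)=\chi-(m+1)\kappa$. The base operator acts on $u_D^*TD_m$ for a $\kappa$-fold branched cover $u_D\colon\dot F\to D_m$ with boundary on the arcs $\partial_iD_m$; Riemann--Hurwitz gives the total branching
\begin{equation*}
    B=\kappa\,\chi(D_m)-\chi(\dot F)=(m+2)\kappa-\chi,
\end{equation*}
and $\mathrm{ind}(D_u^{\mathrm{base}})$ is then assembled from $B$, the Maslov contribution of the real arcs $\partial_iD_m\subset D_m$, and the $(m-2)$-dimensional parameter space $\mathcal{A}_m$ of conformal structures on $D_m$. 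Summing and collecting the two contributions will collapse to \eqref{eq-ind}.

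The main obstacle is the bookkeeping for $\mathrm{ind}(D_u^{\mathrm{base}})$: one has to combine the Maslov contributions along each $\partial_iD_m$, the branching at interior ramification points, and the residual automorphism quotient parameterizing $D_m\in\mathcal{A}_m$ so that the remaining $\kappa$- and $m$-dependence collapses exactly to $(n-2)\chi+2\kappa-m\kappa n+m-2$. A useful sanity check is $\kappa=1$, where the formula must reduce to the usual Riemann--Roch for the Fukaya category and reproduce the expected $A_\infty$ grading shift $2-m$ at $\mathrm{ind}(u)=0$.
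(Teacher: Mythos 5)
The paper does not actually prove Proposition~\ref{prop-ind}; it simply recalls it from \cite[Section~4]{colin2020applications}, so there is no internal proof to match your argument against. Your strategy of splitting the linearized operator with respect to the fibration $\pi_{D_m}\colon D_m\times X\to D_m$ into a fiber piece and a base piece is the right structural idea, and it is the one naturally suggested by condition (J2).

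There are, however, two concrete problems with what you wrote. First, your Riemann--Hurwitz computation is wrong: with $\chi=\chi(F)$ for the compact surface and $\chi(\dot F)=\chi-(m+1)\kappa$, you must pair a \emph{compact} base with the compact cover (or punctured with punctured). Using the compact disk $D$, $\chi(F)=\kappa\,\chi(D)-B$ gives $B=\kappa-\chi$; using the punctured surfaces, $\chi(\dot F)=\kappa\,\chi(D_m)-B$ with $\chi(D_m)=-m$ gives $\chi-(m+1)\kappa=-m\kappa-B$, hence again $B=\kappa-\chi$. Your expression $B=(m+2)\kappa-\chi$ comes from taking $\chi(D_m)=1$ while simultaneously treating $\dot F$ as punctured, which double-counts the boundary punctures. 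Second, and more importantly, the argument stops exactly where the content is: you state that $\mathrm{ind}(D_u^{\mathrm{base}})$ is ``assembled from $B$, the Maslov contribution of the real arcs, and the $(m-2)$-dimensional parameter space'' and that this ``will collapse to'' the claimed formula, and then you yourself name this bookkeeping as the main obstacle. That assembly is not carried out, so the dependence on $\kappa$ and $m$ in \eqref{eq-ind}, i.e.\ the term $2\kappa-m\kappa n+m-2$, is never actually verified. Until you write down $\mathrm{ind}(D_u^{\mathrm{base}})$ explicitly (boundary branch points are codimension one in the branch divisor, interior ones contribute two real parameters each, and you must account for quotienting by $\mathrm{Aut}(D_m)$ or, when $m=1$, the extra $\mathbb{R}$-translation) and sum with the corrected $B$, this is a plan rather than a proof. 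As a sanity check once you redo the bookkeeping: for $\kappa=1$, $\chi=1$, the base is a trivial $1$-fold cover with $B=0$ and the formula must reduce to the standard Fukaya-category index $(n-2)\cdot 0 + \mu + 2 - mn + m - 2$, and combined with \eqref{eq-grading} it must give $|\mathbf{y}_0|-\sum|\mathbf{y}_i|+m-2$.
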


If $2c_1(TX)=0$ and the Maslov classes of all involved Lagrangians vanish, then there exists a well-defined $\mathbb{Z}$-grading. In this case, the dimension of $\mathcal{R}^{\chi}(\mathbf{y}_1,\dots,\mathbf{y}_m;\mathbf{y}_0)$ can be rewritten as
\begin{equation}
    \label{eq-grading}
    \mathrm{ind}(u)=(n-2)(\chi-\kappa)+|\mathbf{y}_0|-|\mathbf{y}_1|-\dots-|\mathbf{y}_m|+m-2,
\end{equation}
where $|\mathbf{y}_i|=|y_{i1}|+\dots+|y_{i\kappa}|$. Note also that 
\begin{equation}
    \label{eq-hgrading}
    |\hbar|=2-n.
\end{equation}

We omit the details about the orientation of $\mathcal{R}(\mathbf{y}_1,\dots,\mathbf{y}_m;\mathbf{y}_0)$ and the $A_\infty$-relations, and refer the reader to \cite[Section 4]{colin2020applications} for further information.

\subsection{Mushroom}

The construction of ``mushroom'' in this section generalizes the concept introduced in \cite[Section 4]{abouzaid2011topological} for $\kappa\geq1$. 
Roughly speaking, a mushroom combines the $A_\infty$ base direction in the HDHF moduli space with the folded ribbon tree in the Morse moduli space.

\begin{figure}[ht]
    \centering
    \includegraphics[width=10cm]{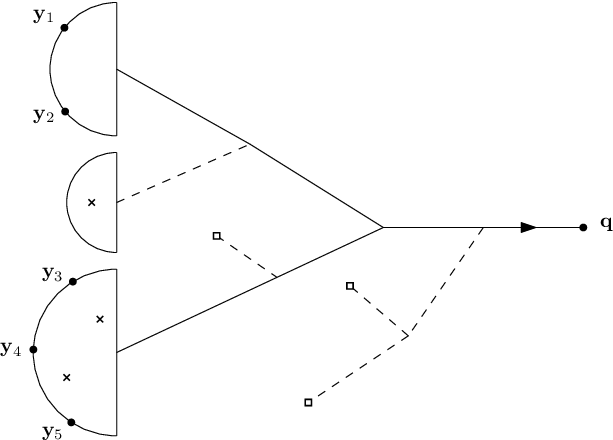}
    \caption{}
    \label{fig-mushroom}
\end{figure}

\begin{definition}
\label{definition-mushroom}
    A mushroom is a tuple $P=(\dot{F},\pi_0,\dot{C},T,\sigma,l)$ which satisfies
    \begin{enumerate}
        \item $C$ is a Riemann surface with boundary composed of $w$ disks $C_1,\dots,C_w$. 
        $\dot{C}_i= C_i\backslash(\sqcup_{j=1}^{m_i} p_{ij}\cup a_i \cup b_i)$ where $a_i,p_{i1},\dots,p_{im_i},b_i$ appear in counterclockwise order on $\partial C_i$ and $m_1+\dots+m_w=m$.
        Denote the arc on $\partial \dot{C}_i$ from $b_i$ to $a_i$ by $\alpha_i$.
        We call $C_i$ a {\em stem disk} if $m_i\geq1$ and a {\em marginal disk} if $m_i=0$.
        We rename $(p_{11},\dots,p_{1m_1},\dots,p_{wm_1},\dots,p_{wm_w})$ as $(p_1,p_2,\dots,p_m)$ and call them incoming boundary punctures.
        We choose strip-like ends near each boundary puncture of $\dot{C}$, similar to Section \ref{section-hdhf}.
        We let $\{p_i\}$ and $\{a_i\}$ be positive punctures and let $\{b_i\}$ be negative punctures.

        \item $F$ is a Riemann surface with boundary. $\dot{F}= F\backslash \left((\sqcup_i \mathbf{p}_i)\cup(\sqcup_i\mathbf{a}_i)\cup(\sqcup_i\mathbf{b}_i)\right)$, where $\mathbf{p}_1,\dots,\mathbf{p}_m$, $\mathbf{a}_1,\dots,\mathbf{a}_w$, $\mathbf{b}_1,\dots,\mathbf{b}_w$ are disjoint $\kappa$-tuples of boundary punctures of $F$.
        $\pi_0\colon \dot{F}\to\dot{C}$ is a $\kappa$-fold branched cover over each $C_i$, mapping $\mathbf{p}_i$ to $p_i$, $\mathbf{a}_i$ to $a_i$, and $\mathbf{b}_i$ to $b_i$.
        We view the branched values of $\pi_0$ on $C_i$ as unlabeled marked points.
        When some marked points coincide, we count their multiplicity.

        \item For each component of the arc $\beta\subset \partial \dot{C}\backslash\sqcup_i\alpha_i$, $\pi_0^{-1}(\beta)\subset \partial \dot{F}$ consists of $\kappa$ disjoint arcs, the set denoted as $S_{\beta}$, and we label them by a bijection $h_{\beta}\colon S_{\beta}\to\{1,\dots,\kappa\}$.
        When there is no ambiguity, we omit the subscript and write $h_\beta=h$.
        
        \item $(T,\sigma,l)$ is a modified folded ribbon tree, where all properties of folded ribbon trees hold, except that now $E_{ext}$ consists of $w$ incoming edges, all of finite length.
        Moreover, for $e\in E_{ext}\cap E_{mar}$, we do not require $\sigma(e)$ to be a transposition.
        We label the $w$ input vertices as $z_{1},\dots,z_{w}$, arranged in counterclockwise order along $\partial D$ when $T$ is embedded in $D$.

        % \item $T$ has $s$ in-going vertices which can be either stem or marginal labeled by $v_1,\dots,v_s$ in the counter-clockwise cyclic order so that $v_i$ is associated with the boundary arc $\alpha_i$, in the sense that there is an embedding of $\dot{C}$ and $T$ in $\mathbb{R}^2$ so that image of $v_i$ lies in the image of $\alpha_i$.

        \item For each inner vertex $v\in V_{inn}$, 
        \begin{equation*}
            \sum_{e\in E_{z_{i}\to v}}l(e)=\sum_{e\in E_{z_{j}\to v}}l(e)
        \end{equation*}
        for all $i,j\in\{1,\dots,w\}$.

        \item {\em Compatibility of permutation}. Let $\alpha_{li}, \alpha_{ri}\in \partial\dot{C}_i$ be the edges adjacent to $\alpha_i$ at $a_i,b_i$, respectively.
        The preimage of $\alpha_i$ on $\partial \dot{F}$ consists of $\kappa$ arcs $\alpha_{i1},\dots,\alpha_{i\kappa}$. Denote the adjacent arcs to $\alpha_{ij}$ over $\alpha_{li},\alpha_{ri}$ by $\alpha_{lij},\alpha_{rij}$, respectively. 
        We define a permutation element $\sigma_i\in \mathfrak{S}_\kappa$ such that $h(\alpha_{rij})=\sigma_i(h(\alpha_{lij}))$ for $j=1,\dots,\kappa$. 
        Denote the unique edge adjacent to $z_i$ by $e_{z_i}$. 
        We require that $\sigma(e_{z_i})=\sigma_i$. 
        
        % \todo{the intersection with fiber is also puncture, follow Abouzaid11.}
        
        % \item $\sigma\colon E\to \mathfrak{S}_\kappa$ assigns each edge a permutation, so that for each $v\in V_{inn}$, $\sigma(e_{v,0})=\sigma(e_{v,k_v-1})\cdots\sigma(e_{v,1})$.\\

        % \item In-going edges in $E_{ext}\cap E_{ste}$ are identified with $(-\infty,0]$ and the out-going edge in $E_{ext}$ is identified with $[0,\infty)$.
        % Each $e\in E_{inn}\cup E_{mar}$ is identified with $[0,l(e)]$, where $l\colon E_{inn}\to[0,\infty)$ is a length function.
    \end{enumerate}
\end{definition}

Let $\mathcal{P}^{m,r}$ denote the set of mushrooms with $m$ incoming boundary punctures on $\dot{C}$, $r_1$ branch points on $\dot{C}$, and $r_2$ marginal vertices on $T$, such that $r=r_1+r_2$.
See Figure \ref{fig-mushroom} for an example of a mushroom in $\mathcal{P}^{5,6}$.

\subsection{Perturbation data}
\label{section-mix-data}

From now on, we focus on the 2-plane $S=\mathbb{R}^2$ and consider perturbation data of the mixed moduli space that depends on the Morse functions.
Let $g_0$ be the Euclidean metric on $\mathbb{R}^2$. 
Using $g_0$, we view $T^*\mathbb{R}^2\simeq\mathbb{C}^2$ with the standard complex structure $J_0$.

Consider $m+1$ objects $\boldsymbol{f}_0,\dots,\boldsymbol{f}_m$ of $\operatorname{Morse}_\kappa(\mathbb{R}^2)$ as defined in Section \ref{section-disk}.
In addition to conditions (C1)-(C3), we further require that:
\begin{enumerate}
    \item[(C4)] There exist constants $B_{ij},\theta_{i1},\theta_{i2}\in\mathbb{R}$ such that
    \begin{equation*}
        f_{i,j}=(\kappa-i)(x_1^2+x_2^2) + B_{ij}(\theta_{i1} x_1+\theta_{i2} x_2),
    \end{equation*}
    on $\mathbb{R}^2\backslash D_{R}$, where $|B_{ij}|<1$ and $\theta_{i1}^2+\theta_{i2}^2=1$. 
    Note that this condition is stronger than (C1)-(C3).
\end{enumerate}
Under the above condition, we can express
\begin{equation*}
    f_{i,j}=f_{i,j,1}(x_1)+f_{i,j,2}(x_2),
\end{equation*}
For later use we require $f_{i,j,1}(x_1),f_{i,j,2}(x_2)$ to be extended to be defined on $\mathbb{R}^2$ and denote $F_{i,j}=f_{i,j,1}(x_1)+f_{i,j,2}(x_2)$ on $\mathbb{R}^2$.

Moreover, we can also write
\begin{equation}
\label{eq-split}
    f_{i,j}=\lambda_{i,j,1}(x_{i1})+\lambda_{i,j,2}(x_{i2}),
\end{equation}
where
\begin{gather*}
    \lambda_{i,j,1}(x)=(\kappa-i)x^2+B_{ij}x,\quad\lambda_{i,j,2}(x)=(\kappa-i)x^2,\\
    x_{i1}=\theta_{i1}x_1+\theta_{i2}x_2,\quad x_{i2}=\theta_{i2} x_1-\theta_{i1}x_2.
\end{gather*}
The split form of $f_{i,j}$ in (\ref{eq-split}) will be used to prove compactness in Lemma \ref{lemma-bound}.

As the bridge between Floer and Morse theory, we need a choice of perturbation data associated with each mushroom so that it is compatible with the domain-dependent almost complex structure (Definition \ref{definition-floer-data}) and the domain-dependent metrics (Definition \ref{definition-morse-datum}) on boundary strata.
% Denote by $\mathcal{J}^\epsilon_{T^*{\mathbb{R}^2}}$ the set of almost complex structures adapted to the canonical symplectic form which coincides with $J_0$ on $D\backslash D_{R}$.
Denote by $\mathcal{J}^R(T^*\mathbb{R}^2)$ the set of almost complex structures on $T^*\mathbb{R}^2$ that are adapted to the canonical symplectic form and equal to $J_0$ on $T^*(\mathbb{R} ^2\backslash D_R)$.
The perturbed almost complex structure will be parametrized by the branched cover $\pi_0\colon\dot{F}\to \dot{C}$ instead of just $\dot{C}$.

% Given a (not necessarily holomorphic) branched cover $\pi_0\colon \dot{F}_i\to\dot{C}_i$, consider the smooth assignment $\dot{F}_i\mapsto \widehat{\mathcal{J}}_{\dot{F}_i}$, where
% \begin{equation*}
%     \widehat{\mathcal{J}}_{\dot{F}_i}\colon \dot{F}_i\to\mathcal{J}_{\dot{C}_i}.
% \end{equation*}
% All such smooth assignments forms a convex set.

\begin{definition}
    \label{definition-floer-morse-data} 
    A {\bf perturbation datum} on $P=(\dot{F},\pi_0,\dot{C},T,\sigma,l)\in\mathcal{P}^{m,r}$ consists of 
    \begin{enumerate}
        \item a map
        \begin{equation*}
            J_{\pi_0}\colon \dot{F}\to \mathcal{J}^R(T^*\mathbb{R}^2)
        \end{equation*}
        such that:
        \begin{enumerate}
            % \item[(J1')] for each $q\in \dot{F}_i$ and $p\in \dot{C}_i$, on each fiber $\pi_{\dot{C}_i}^{-1}(p)=\{p\}\times T^*\mathbb{R}^2$, $\widehat{\mathcal{J}}_{\dot{F}_i}(q)$ restricts to an element of $\mathcal{J}^{\epsilon}_{T^*\mathbb{R}^2}$;
            
            % \item[(J2')] for each $q\in \dot{F}_i$, $\widehat{\mathcal{J}}_{\dot{F}_i}(q)$ projects holomorphically onto $C_i$;
            
            \item[(M1)] Over each strip-like end $[0,\infty)_{s_j}\times[0,1]_{t_j}$ (resp. $(-\infty,0]_{s_j}\times[0,1]_{t_j}$) of $\pi_0^{-1}(\dot{C})$, for $s_j$ sufficiently positive (resp. negative), $J_{\pi_0}$ is invariant in the $s_j$-direction and takes $\partial_{s_j}$ to $\partial_{t_j}$;
            
            \item[(M2)] If $C_i$ has $m_i=0$ and has 1 marked point counting multiplicity, then $\sigma_i$ defined in Definition \ref{definition-mushroom} (6) is a transposition. 
            Now $F_i$ is a quadrilateral, viewed as
            \begin{equation*}
                F_i=\{0\leq\operatorname{Re}z\leq d,0\leq\operatorname{Im}z\leq1\}\subset\mathbb{C},
            \end{equation*}
            where $d$ is the modulus, $\pi_0^{-1}(\alpha_i)=F_i\cap\{\operatorname{Re}z\in\{0,d\}\}$, and $\pi_0^{-1}(\alpha_{li})=F_i\cap\{\operatorname{Im}z\in\{0,1\}\}$.
            Suppose $\beta_{i0}\in\pi_0^{-1}(\alpha_{li})$ has $\operatorname{Im}z=0$ and $\beta_{i1}\in\pi_0^{-1}(\alpha_{li})$ has $\operatorname{Im}z=1$.

            Let $k=m_1+\dots+m_{i-1}$.
            Denote by $\phi_0^t, \phi_1^t$ the time-$t$ Hamiltonian flow of 
            \begin{gather*}
                H_0(q,p)=f_{k,h(\beta_{i0})}(q)-F_{k,h(\beta_{i0})}(q)\\
                H_1(q,p)=f_{k,h(\beta_{i1})}(q)-F_{k,h(\beta_{i1})}(q)
            \end{gather*}
            with respect to the canonical symplectic form on $T^*\mathbb{R}^2$.
            We specify $J_{\pi_0}$ as $J(t)$, where $t=\operatorname{Im}z\in[0,1]$, so that
            \begin{equation*}
                J(t)=(\phi_1^t\circ\phi_0^{1-t})_*\circ J_0\circ(\phi_1^t\circ\phi_0^{1-t})^{-1}_*.
            \end{equation*}
            Note that there are two ways to identify $F_i$ with $\{0\leq\operatorname{Re}z\leq d,0\leq\operatorname{Im}z\leq1\}$. However, by the symmetry of construction, the two ways define the same $J(t)$.
        \end{enumerate}

        \item a map
        \begin{equation*}
            g\colon T\to \mathfrak{M}^R_{\mathbb{R}^2},
        \end{equation*}
        which depends continuously on $(T,l)$.
    \end{enumerate}
    We require the compatibility with boundary strata, i.e., when elements in $\mathcal{P}^{m,r}$ limit to 
    \begin{equation*}
        \mathcal{A}^{m_1,r_1}\times \mathcal{P}^{m_2,r_2},\quad m_1+m_2=m+1,\,r_1+r_2=r,
    \end{equation*}
    as shown on the right of Figure \ref{fig-cpt-F3}, or
    \begin{equation*}
        \mathcal{P}^{m_1,r_1}\times\mathcal{T}^{m_2,\chi_2},\quad m_1+m_2=m+1,\,r_1+\kappa-\chi_2=r,
    \end{equation*}
    as shown on the left of Figure \ref{fig-cpt-F3}, we need conditions similar to Definition \ref{definition-floer-data} and Definition \ref{definition-morse-datum}.
    The details are omitted here.

\end{definition}

\begin{lemma}
\label{lemma-floer-morse-data}
    Suppose $\mathcal{P}^{m,r}$ has been chosen for $m\leq m_0$, $r\leq r_0$, and satisfies the above conditions, then it can be extended to a choice of perturbation data for all $m,r\geq0$.
\end{lemma}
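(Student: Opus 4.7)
The plan is to proceed by induction on the pair $(m,r)$, mimicking the construction of Floer data in the Fukaya category and combining it with Lemma \ref{lemma-perturbation-data}. The two perturbation ingredients --- the almost complex structure $J_{\pi_0}\colon \dot{F}\to\mathcal{J}^R(T^*\mathbb{R}^2)$ and the Riemannian metric $g\colon T\to\mathfrak{M}^R_{\mathbb{R}^2}$ --- will be extended simultaneously, exploiting the fact that both target spaces are contractible (in fact convex after suitable local parametrization).

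For the inductive step, I assume that compatible perturbation data have been chosen on every $\mathcal{P}^{m',r'}$ with $m'\leq m_0$ and $r'\leq r_0$, and consider some $\mathcal{P}^{m,r}$ with $m=m_0+1$ or $r=r_0+1$. The boundary $\partial\mathcal{P}^{m,r}$ decomposes into products of the two forms listed in Definition \ref{definition-floer-morse-data},
\[
\mathcal{A}^{m_1,r_1}\times\mathcal{P}^{m_2,r_2}\quad\text{and}\quad\mathcal{P}^{m_1,r_1}\times\mathcal{T}^{m_2,\chi_2},
\]
and in each case both factors have strictly smaller complexity than $\mathcal{P}^{m,r}$. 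Hence the inductive hypothesis, together with Lemma \ref{lemma-perturbation-data} for the $\mathcal{T}^{m_2,\chi_2}$-factors and the standard Fukaya inductive construction of almost complex structures for the $\mathcal{A}^{m_1,r_1}$-factors, prescribes a continuous assignment $(J_{\pi_0},g)$ on all of $\partial\mathcal{P}^{m,r}$ satisfying the strip-like end invariance (M1) and the asymptotic conditions at the $\mathbb{R}$-invariant edges of $T$.

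Next I would extend $(J_{\pi_0},g)$ over the interior of $\mathcal{P}^{m,r}$. This is where convexity of $\mathcal{J}^R(T^*\mathbb{R}^2)$ and $\mathfrak{M}^R_{\mathbb{R}^2}$ does the work: any continuous assignment on the compact subset $\partial\mathcal{P}^{m,r}$ admits a continuous extension, which can first be made $s_j$-invariant on the strip-like neighborhoods and then interpolated linearly (inside a convex chart) on the compact remainder of $\dot{F}$ and $T$. This preserves (M1) as well as the remaining asymptotic normalizations along the stem edges.

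The main potential obstacle is condition (M2), which rigidly specifies $J_{\pi_0}$ on marginal disks carrying a single branch point by an explicit conjugation formula in terms of the two adjacent Morse functions. Such disks appear only on very specific boundary strata of $\mathcal{P}^{m,r}$, and the formula depends only on local Hamiltonian data, not on the global moduli parameters; therefore the prescribed value is automatically consistent across codimension-two corners of $\mathcal{P}^{m,r}$ where two such strata meet, and it imposes no new constraint on the convex extension. With this verified, the induction closes, giving the desired extension for all $(m,r)$. I expect no genuinely new difficulty beyond this bookkeeping, as the argument closely parallels the Floer-data construction in \cite{seidel2008fukaya} together with Lemma \ref{lemma-perturbation-data}.
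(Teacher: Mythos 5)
Your proposal takes essentially the same route as the paper's: an inductive extension over the boundary strata $\mathcal{A}^{m_1,r_1}\times\mathcal{P}^{m_2,r_2}$ and $\mathcal{P}^{m_1,r_1}\times\mathcal{T}^{m_2,\chi_2}$, combined with (convexity-based) interpolation over the interior, mirroring the Floer-data construction in \cite{seidel2008fukaya} and Lemma \ref{lemma-perturbation-data}. The paper's proof is deliberately terse, and the one point it flags explicitly --- that on a degenerate stratum $\mathcal{A}^{m_1,r_1}\times\mathcal{P}^{m_2,r_2}$ the almost complex structure on the $\mathcal{A}^{m_1,r_1}$-factor stops being parametrized by $\dot{F}$ and reverts to being parametrized by $D_{m_1}$ as in Definition \ref{definition-floer-data} --- is implicitly handled in your phrase ``the standard Fukaya inductive construction of almost complex structures for the $\mathcal{A}^{m_1,r_1}$-factors,'' so that matches.

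One small inaccuracy worth correcting: you say disks subject to (M2) ``appear only on very specific boundary strata of $\mathcal{P}^{m,r}$.'' That is not so --- a mushroom in the \emph{interior} of $\mathcal{P}^{m,r}$ can perfectly well carry a marginal disk $C_i$ with $m_i=0$ and a single branch point, as long as its edge in the tree $T$ has finite positive length. Condition (M2) therefore pins down $J_{\pi_0}$ on those quadrilateral components $F_i$ throughout $\mathcal{P}^{m,r}$, not just near the boundary. This does not break your argument, but the justification should be restated: (M2) is a pointwise, explicitly determined prescription on a distinguished subset of components of $\dot F$, varying continuously with the mushroom and reducing correctly under degenerations by its construction; the inductive convex extension is performed only on the complementary (unconstrained) portion of $(\dot F, T)$, so (M2) removes freedom rather than obstructing the extension. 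With that clarification, the proposal is correct and aligned with the paper.
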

\begin{proof}
    The construction is similar to that of Definition \ref{definition-floer-data} and Lemma \ref{lemma-perturbation-data}.
    Note that when $\mathcal{P}^{m,r}$ limits to $\mathcal{A}^{m_1,r_1}\times \mathcal{P}^{m_2,r_2}$, the almost complex structure parametrized by $\dot{F}$ reduces to being parametrized by $D_{m_1}$ on $\mathcal{A}^{m_1,r_1}$ as in Definition \ref{definition-floer-data}.
\end{proof}

\subsection{The $A_\infty$-map from Floer to Morse}

Consider the $m+1$ objects $\boldsymbol{f}_0,\dots,\boldsymbol{f}_m$ of $\operatorname{Morse}_\kappa(T^*\mathbb{R}^2)$ as defined in Section \ref{section-mix-data}. 
Let $\Gamma_{d\boldsymbol{f}_i}\coloneqq\{\Gamma_{df_{i,1}},\dots,\Gamma_{df_{i,\kappa}}\}$ be the $\kappa$-tuple of graphical Lagrangians in $T^*\mathbb{R}^2$ for $i=0,\dots,m$.
Then $\{\Gamma_{d\boldsymbol{f}_i}\}$ are objects of $\mathcal{F}^{gr}_\kappa(T^*\mathbb{R}^2)$.

Let $\mathbf{y}_i\in \mathrm{hom}(\Gamma_{d\boldsymbol{f}_{i-1}},\Gamma_{d\boldsymbol{f}_i})$ for $i=1,\dots,m$ and $\boldsymbol{q}_0\in \mathrm{hom}(\boldsymbol{f}_0,\boldsymbol{f}_m)$.
The moduli space $\mathcal{H}^{r}(\mathbf{y}_1,\dots,\mathbf{y}_m;\boldsymbol{q}_0)$ consists of tuples $(P,u,\gamma)$ where 

\begin{enumerate}
    \item $P=(\dot{F},\pi_0,\dot{C},T,\sigma,l)\in\mathcal{P}^{m,r}$ is a mushroom.
    
    \item The map
    \begin{equation*}
        u\colon (\dot F,j)\to(T^*\mathbb{R}^2,J_{\pi_0}),
    \end{equation*}
    satisfies
    \begin{equation*}
        du\circ j=J_{\pi_0}\circ du,
    \end{equation*}
    where $u$ tends to $\mathbf{y}_i$ as $s_i\to+\infty$ for $i=1,\dots,m$.
    
    \item If $\beta\subset \partial \dot{C}\backslash\sqcup_i\alpha_i$ and $\beta_i\subset\pi_0^{-1}(\beta)\subset \partial \dot{F}$, so that $\beta$ lies between $p_i$ and $p_{i+1}$, and $h(\beta_i)=k$, then $\beta_i$ is mapped to $\Gamma_{df_{i,k}}$ under $u$.

    \item The modified folded ribbon tree $T_*=(T,\sigma,l)$ uniquely determines a modified ribbon graph $G_*=(G,T_*,\pi,h_l,h_r,i_l,i_r)$, where the generalization is straightforward.
    The map $\gamma\colon G\to \mathbb{R}^2$ is a continuous map so that
    \begin{equation*}
        \dot{\gamma}|_{\tilde{e}}=\nabla_g(f_{i_r(\tilde{e}),h_r(\tilde{e})}-f_{i_l(\tilde{e}),h_l(\tilde{e})}),
    \end{equation*}
    for each $\tilde{e}\in \tilde{E}$ identified with an interval of $\mathbb{R}$ specified by the underlying modified folded ribbon tree $\mathcal{T}_*$.

    \item For each $i\in\{1,\dots,w\}$, there exist $\zeta_{ij}\in \mathbb{R}^2$, $j=1,\dots,\kappa$ so that 
    \begin{equation*}
        u(\alpha_{ij})\subset T_{\zeta_{ij}}^*\mathbb{R}^2,
    \end{equation*}
    where $\{\alpha_{ij},\, j=1,\dots,\kappa\}$ are the components of $\pi_0^{-1}(\alpha_i)$.
    Let $\tilde{e}\in\pi^{-1}(e_{z_i})$ so that $h_l(\tilde{e})=h(\beta_{lij})$.
    We then require that $\gamma(v_{st}(\tilde{e}))=\zeta_{ij}$.
    We call $\{\zeta_{ij}\}$ {\em paste points}.
    \label{enum-paste-point}

    \item For $\tilde{v}\in\pi_0^{-1}(v_0)$, $\gamma(\tilde{v})=q_{0,h_l(\tilde{e})}$, where $\tilde{e}$ is the unique edge in $\tilde{E}$ adjacent to $\tilde{v}$.

\end{enumerate}

Given $\mathbf{y}=(y_1,\dots,y_\kappa)\in\operatorname{hom}(\Gamma_{d\boldsymbol{f}_{0}},\Gamma_{d\boldsymbol{f}_1})$ with $y_i\in \Gamma_{df_{0,i}}\cap\Gamma_{df_{1,\sigma(i)}}$ for some $\sigma\in\mathfrak{S}_\kappa$, we define the action
\begin{equation}
    \mathcal{A}(\mathbf{y})\coloneqq\sum_{i=1}^\kappa f_{1,\sigma(i)}(\pi_{\mathbb{R}^2}(q_i))-\sum_{i=1}^\kappa f_{0,i}(\pi_{\mathbb{R}^2}(q_i)),
\end{equation}
where $\pi_{\mathbb{R}^2}\colon T^*\mathbb{R}^2\to\mathbb{R}^2$ is the projection.
By definition, if $(P,u,\gamma)\in\mathcal{H}(\mathbf{y}_1,\dots,\mathbf{y}_m;\boldsymbol{q}_0)$, then
\begin{equation}
\label{eq-F-energy}
    \mathcal{A}(\boldsymbol{q}_0)-\sum_{i=1}^m\mathcal{A}(\mathbf{y}_i)=-\int_{\dot{F}} u^*\omega-\sum_{\tilde{e}\in \tilde{E}}\int_{\tilde{e}} \gamma^*(df_{i_r(\tilde{e}),h_r(\tilde{e})}-df_{i_l(\tilde{e}),h_l(\tilde{e})})\leq 0,
\end{equation}
where $\omega$ is the canonical exact symplectic form on $T^*\mathbb{R}^2$.

\begin{lemma}
\label{lemma-trans-E}
    Fixing a generic choice of perturbation data, $\mathcal{H}^r(\mathbf{y}_1,\dots,\mathbf{y}_m;\boldsymbol{q}_0)$ is a smooth manifold of dimension
    \begin{equation}
    \label{eq-dim-mix}
        |\boldsymbol{q}_0|-|\mathbf{y}_1|-\dots-|\mathbf{y}_m|+m-1.
    \end{equation}
\end{lemma}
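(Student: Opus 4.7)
The plan is to mimic the Fredholm-theoretic approach of Lemma \ref{lemma-trans}, now for the coupled problem of a pseudoholomorphic map $u$, a Morse flow graph $\gamma$, and the paste points $\{\zeta_{ij}\}$ that glue them together. I would realize $\mathcal{H}^r$ as the zero set of a Fredholm section $\mathcal{F}$ over a Banach configuration space of tuples $(P,u,\gamma,\{\zeta_{ij}\})$, and compute $\operatorname{ind}(D\mathcal{F})$ at a solution. Transversality for a generic choice of $(J_{\pi_0},g)$ provided by Lemma \ref{lemma-floer-morse-data}, together with Sard-Smale, will then yield the smooth manifold structure of the predicted dimension.

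For the index, I would decompose the linearization into three Fredholm blocks that couple only through finite-dimensional gluing data. (a) With paste points fixed, the $\bar{\partial}$-linearization of $u$ carries Lagrangian boundary conditions along the wrapped graphs $\Gamma_{df_{i,k}}$ between inputs and along the cotangent fibers $T^*_{\zeta_{ij}}\mathbb{R}^2$ over $\pi_0^{-1}(\alpha_i)$; summing the $n=2$ specialization of Proposition \ref{prop-ind} over the stem disks of $\dot C$ and observing that $|\mathbf{a}_i|=|\mathbf{b}_i|$, since they are the unique intersections of the same pair of Lagrangians, gives the Floer contribution, while each marginal quadrilateral contributes zero thanks to the symmetric construction of $J(t)$ in Definition \ref{definition-floer-morse-data}(M2). (b) The Morse graph $\gamma$ with prescribed starting points $\{\zeta_{ij}\}$ and endpoint $\boldsymbol{q}_0$ contributes a dimension computed exactly as in Lemma \ref{lemma-trans}, modulo the $w-1$ length-balance relations in item (5) of Definition \ref{definition-mushroom}. (c) The paste points $\{\zeta_{ij}\}\in(\mathbb{R}^2)^{w\kappa}$ are free parameters whose identification with $\gamma(v_{st}(\tilde e))$ is a codimension-$2\kappa w$ constraint. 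Adding (a), (b) and the parameter dimension of the paste points, then subtracting the paste constraint, should produce a telescoping cancellation that leaves
\begin{equation*}
    \dim\mathcal{H}^r(\mathbf{y}_1,\dots,\mathbf{y}_m;\boldsymbol{q}_0) = |\boldsymbol{q}_0|-|\mathbf{y}_1|-\cdots-|\mathbf{y}_m|+m-1.
\end{equation*}
The $+1$ shift from the pure HDHF formula of Proposition \ref{prop-ind} should reflect that the final output lives in $\mathbb{R}^2$ rather than as a Reeb-chord endpoint, so only one overall reparametrization is quotiented out on the Morse side.

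Transversality would then follow from a standard Sard-Smale argument applied to the universal moduli space over the space of perturbation data in Lemma \ref{lemma-floer-morse-data}, perturbing $J_{\pi_0}$ on the interior of $\dot F$ away from branch points and perturbing $g\in\mathfrak{M}^R_{\mathbb{R}^2}$ near each paste point. The main obstacle I anticipate is two-fold: first, the zero-contribution calculation for each marginal quadrilateral, which requires careful bookkeeping of the Maslov index attached to the half-turn family $\phi_1^t\circ\phi_0^{1-t}$ used to define $J(t)$ in Definition \ref{definition-floer-morse-data}(M2); and second, ensuring that perturbations of $g$ localized near paste points remain compatible with the boundary-strata conditions. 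Convexity of $\mathfrak{M}^R_{\mathbb{R}^2}$ together with the freedom to perturb on the interior of $E_{inn}\cup E_{mar}$ should make the second point manageable, whereas the Maslov bookkeeping is the serious new computation specific to the folded setting.
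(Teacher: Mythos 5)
Your proposal is consistent with the paper's (extremely terse) proof, which simply cites Abouzaid \cite{abouzaid2011topological} for the $\kappa=1$ case and says the generalization is ``similar to Lemma \ref{lemma-trans}'', i.e., Fredholm theory plus Sard--Smale over the space of perturbation data. Your block decomposition into the $\bar\partial$-operator with fiber boundary conditions, the Morse flow graph, and the paste-point constraint is exactly the implicit reduction the paper is invoking, and you correctly flag the two genuinely new computations (the index contribution of the $J(t)$-quadrilaterals from Definition \ref{definition-floer-morse-data}(M2), and compatibility of $g$-perturbations near paste points with the boundary strata) that the paper leaves to the reader; one small correction: $|\mathbf{a}_i|=|\mathbf{b}_i|$ holds not because the Lagrangian pair is literally the same at $a_i$ and $b_i$ (for stem disks with $m_i\geq 1$ the graphical sheet changes) but because any transverse intersection of a graph $\Gamma_{df}$ with a cotangent fiber $T^*_\zeta\mathbb{R}^2$ has a fixed grading independent of $f$ and $\zeta$.
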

\begin{proof}
    The case of $\kappa=1$ is discussed in \cite{abouzaid2011topological}.
    The generalization to $\kappa\geq1$ is similar to Lemma \ref{lemma-trans}.
\end{proof}

The $C^0$-boundedness similar to condition \ref{condition-A} also holds here:
\begin{lemma}
\label{lemma-bound}
    For each $(P,u,\gamma)\in\mathcal{H}^r(\mathbf{y}_1,\dots,\mathbf{y}_m;\boldsymbol{q}_0)$, the image of $\gamma$ lies in $\overline{D}_{R}$ and the image of $u$ lies in $T^*\overline{D}_{R}$. 
\end{lemma}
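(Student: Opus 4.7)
My plan is to treat the images of $\gamma$ and $u$ separately via two maximum-principle arguments. The argument for $\gamma$ will be a verbatim repetition of the reasoning in Lemma~\ref{lemma-disk}: I would suppose $r := \max_{x \in G}\|\gamma(x)\| > R$ and pick a point $p \in G$ realizing the maximum, then derive a contradiction by case analysis on whether $p$ lies in the interior of a stem edge (using (C2)), in the interior of a marginal edge (using (C1)), or at a vertex (using Kirchhoff's law at that vertex). This will force $\gamma(G) \subset \overline D_R$; in particular every paste point $\zeta_{ij}$ lies in $\overline D_R$, which feeds into the Floer bound below.

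For $u$, I would apply the maximum principle to $\Phi(q,p) := q_1^2 + q_2^2$ on $T^*\mathbb R^2$. Set $V := u^{-1}(T^*\mathbb R^2 \setminus T^*\overline D_R)$; it suffices to prove $V = \emptyset$. Since $J_{\pi_0} \in \mathcal J^R(T^*\mathbb R^2)$, we have $J_{\pi_0} = J_0$ on $V$; writing $z_\ell = q_\ell + ip_\ell$, each $q_\ell \circ u$ is then harmonic on $V$ as the real part of a holomorphic function, so $\Phi \circ u$ is subharmonic there. I would rule out a supremum exceeding $R^2$ at each possible location: at strip-like ends the asymptotes are either Floer intersections of $\Gamma_{d\boldsymbol f_{i-1}}$ with $\Gamma_{d\boldsymbol f_i}$ (confined to $D_R$ for large $R$ by an elementary calculation from (C4)) or paste points $\zeta_{ij} \in \overline D_R$ from the Morse step; at an interior point of $V$, the strong maximum principle would make $\Phi\circ u$ constant on a component, whence $\Delta(\Phi\circ u) = 2|\nabla q_1|^2 + 2|\nabla q_2|^2 \equiv 0$ would force $q_1\circ u, q_2 \circ u$ constant, Cauchy-Riemann then forces $u$ constant, and this contradicts the asymptotics since every component of $\dot F$ carries at least one puncture whose asymptote lies in $T^*\overline D_R$.

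The remaining case --- a boundary maximum of $\Phi \circ u$ on $\partial F \cap \overline V$ --- will be handled with Hopf's boundary-point lemma together with the split form (\ref{eq-split}). Outside $D_R$ the Lagrangian $\Gamma_{df_{i,j}}$ has constant Hessian $2(\kappa-i) I$, so a tangent frame is $\tau_\ell = \partial_{q_\ell} + 2(\kappa-i)\partial_{p_\ell}$, giving $d\Phi(\tau_\ell) = 2 q_\ell$ and $d\Phi(J_0 \tau_\ell) = -4(\kappa-i) q_\ell$. Writing $du(\tau) = \sum c_\ell \tau_\ell$ along $\partial F$, the tangential critical-point condition $\partial_\tau(\Phi\circ u) = 2\langle c, q\rangle = 0$ then yields $\partial_\nu(\Phi\circ u) = -4(\kappa-i)\langle c, q\rangle = 0$, so Hopf again forces $\Phi\circ u$ constant on the component, reducing to the previous contradiction. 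The main obstacle will be precisely this degenerate Hopf case: because the outward normal derivative comes out exactly zero rather than strictly signed, one cannot rule out a boundary max by a simple Hopf sign argument alone, and must instead pass through the strong maximum principle and the global component structure of $\dot F$ to close the contradiction.
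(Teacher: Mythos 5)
Your proposal decouples the Morse bound for $\gamma$ from the Floer bound for $u$ and runs them sequentially, but this is exactly what cannot be done here: the two are coupled through the paste points $\zeta_{ij}$, and it is precisely the behavior at a paste point that makes this lemma nontrivial. In the Morse-only Lemma~\ref{lemma-disk}, every exterior vertex of $G$ is sent to a fixed critical/intersection point lying in $D_R$, so the maximum of $\|\gamma\|$ can always be pushed higher along an adjacent edge. In the mushroom setting the input vertices of $G$ project to $z_1,\dots,z_w$ and are sent to the paste points $\zeta_{ij}$, which are \emph{free}, determined by where the boundary arcs $\alpha_{ij}\subset\partial\dot F$ are sent by $u$. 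If the maximum of $\|\gamma\|$ occurs at such a vertex and the unique adjacent edge lies in $\tilde E_{mar}$ with $h_r(\tilde e)\neq h_l(\tilde e)$, the flow along that edge is $\nabla_g(f_{k,h_r}-f_{k,h_l})$, which by (C1) is merely a constant vector outside $D_R$ with no radial-direction control (C2) and (C3) concern only cross-index differences. It can perfectly well point inward, so the ``verbatim repetition of Lemma~\ref{lemma-disk}'' gives no contradiction, and you cannot conclude $\gamma(G)\subset\overline D_R$ without first saying something about $u$.

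Symmetrically, your Floer-side Hopf argument is set up only for the graphical Lagrangian boundaries $\Gamma_{df_{i,j}}$; you do not treat the boundary arcs $\alpha_{ij}$, which are mapped into cotangent fibers $T^*_{\zeta_{ij}}\mathbb R^2$ rather than graphs, and where the tangent frame computation you wrote down does not apply. You acknowledge the paste points only as asymptotic values ``from the Morse step,'' importing the bound you were unable to establish. The paper's proof avoids the circularity by comparing the three quantities $r_C=\max\|\pi_{\mathbb R^2}\circ u\|$, $r_T=\max\|\gamma\|$, $r_{paste}=\max\|\zeta_{ij}\|$ jointly. The cases $r_C>r_{paste}$ and $r_T>r_{paste}$ reduce to one-sided maximum principles (citing \cite[Lemma 7.4]{honda2022jems} and Lemma~\ref{lemma-disk} respectively). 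The delicate case is $r_C=r_T=r_{paste}>R$ at a marginal edge: there the split-coordinate condition (C4) forces, via Figure~\ref{fig-paste}, the projection of $u$ to $T^*\mathbb R_{x_{i2}}$ to be locally constant near $\alpha_{ij}$, so the union of the images of $u$ and $\gamma$ contains a genuine straight arc through $\zeta_{ij}$, which strictly increases either $r_C$ or $r_T$ past $r_{paste}$. This joint step is the missing idea in your proposal, and without it the argument does not close.
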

\begin{proof}
    Let $(P,u,\gamma)\in\mathcal{H}^r(\mathbf{y}_1,\dots,\mathbf{y}_m;\boldsymbol{q}_0)$. Denote 
    \begin{gather*}
        r_C\coloneqq\operatorname{max}_{x\in {F}}||\pi_{\mathbb{R}^2}\circ u(x)||,\quad r_T\coloneqq\operatorname{max}_{x\in {G}}||\gamma(x)||,\quad r_{paste}\coloneqq\operatorname{max}_{i,j}||\zeta_{ij}||,
    \end{gather*}
    where $\{\zeta_{ij}\}$ are the paste points defined in (\ref{enum-paste-point}).

    If $r_C>R$ and $r_C>r_{paste}$, it is not possible by the same arguments as in \cite[Lemma 7.4]{honda2022jems}.

    If $r_T>R$ and $r_T>r_{paste}$, it is also not possible, similar to the proof of Lemma \ref{lemma-disk}.

    If $||\zeta_{ij}||=r_{paste}=r_C=r_T>R$, suppose $\zeta_{ij}=\gamma(v_{st}(\tilde{e}))$ for $\tilde{e}\in\pi^{-1}(e_i)$.
    If $e_i\in E_{ste}$, then by condition (C2), $\operatorname{max}_{x\in \tilde{e}}||\gamma(x)||>r_{paste}$, which contradicts the asssumption.
    If $e_i\in E_{mar}$ and $h_r(\tilde{e})=h_l(\tilde{e})$, then $\gamma|_{\tilde{e}}$ is a constant map, reducing it to the second case.
    If $e_i\in E_{mar}$ and $h_r(\tilde{e})\neq h_l(\tilde{e})$, we need to further investigate the relative position of the image of $u$ and $\gamma$ near $\zeta_{ij}$.

    \begin{figure}[ht]
        \centering
        \includegraphics[width=11cm]{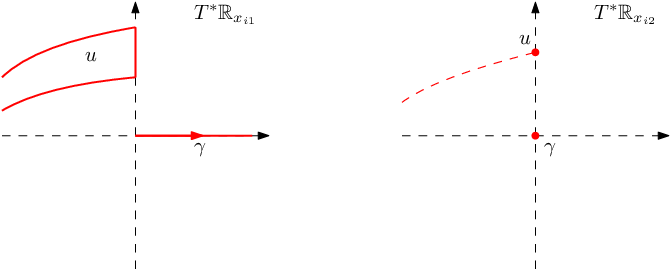}
        \caption{The image of $u$ and $\gamma$ near $T^*_{\zeta_{ij}}\mathbb{R}^2$, where we identify $\mathbb{R}^2$, the target of $\gamma$, with the zero section of $T^*\mathbb{R}^2$. The local coordinates are chosen so that the origin corresponds to $\zeta_{ij}$. The dashed red arc represents the Lagrangian boundary condition in the $T^*\mathbb{R}_{x_{i2}}$-direction. Locally, the projection of $u$ and $\gamma$ to the $T^*\mathbb{R}_{x_{i2}}$-direction must be constant.}
        \label{fig-paste}
    \end{figure}

    Under condition (C4), Figure \ref{fig-paste} shows the local image of $u$ and $\gamma$ near $T^*_{\zeta_{ij}}\mathbb{R}^2$.
    Using the notations defined in (\ref{enum-paste-point}), the image of $\alpha_{ij}$ is represented by the vertical red arc in the $T^*\mathbb{R}_{x_{i1}}$-direction and the constant red dot in the $T^*\mathbb{R}_{x_{i2}}$-direction.
    Since the Lagrangian boundaries in the $T^*\mathbb{R}_{x_{i2}}$-direction is the single dashed arc, the projection of $u$ to $T^*\mathbb{R}_{x_{i2}}$ near $\alpha_{ij}$ must be constant.
    Therefore, the union of the image of $u$ and the image of $\gamma$ contains a straight arc (with respect to $g_0$) passing through $\zeta_{ij}\in\mathbb{R}^2$, hence either $r_C>r_{paste}$ or $r_T>r_{paste}$, which contradicts the assumption.
    
    As a result, $\operatorname{max}\{r_C,r_T\}\leq R$, implying that the image of $\gamma$ lies in $\overline{D}_{R}$ and the image of $u$ lies in $T^*\overline{D}_{R}$.

    % If $p\in \operatorname{int}(\tilde{e})$ for $\tilde{e}\in\tilde{E}_{ste}$, then by (C2), $\operatorname{max}_{x\in \tilde{e}}|\gamma(x)|>r$.

    % If $p\in\tilde{e}$ for $\tilde{e}\in\operatorname{int}(\tilde{E}_{mar})$, then by (C1), $\operatorname{max}_{x\in \tilde{e}}|\gamma(x)|>r$.

    % If $p\in\tilde{V}$, since the edges ending at $p$ satisfy the Kirchhoff laws, there must be one of these attaching edges $\tilde{e}$ so that $\operatorname{max}_{x\in \tilde{e}}|\gamma(x)|>r$.

    % All cases contradict with the definition of $r$.
    % Therefore, $\gamma(G)\subset D_{1-\epsilon}$, and hence Assumption \ref{condition-A} is satisfied.
\end{proof}

\begin{lemma}
\label{lemma-cpt-E}
    Fixing a generic choice of perturbation data, if $|\boldsymbol{q}_0|-|\mathbf{y}_1|-\dots-|\mathbf{y}_m|+m-1\leq 1$, then $\mathcal{H}^r(\mathbf{y}_1,\dots,\mathbf{y}_m;\boldsymbol{q}_0)$ admits a compactification for each $r\geq0$. 
\end{lemma}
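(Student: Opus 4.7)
The plan is to take an arbitrary sequence $\{(P_i,u_i,\gamma_i)\}$ in $\mathcal{H}^r(\mathbf{y}_1,\dots,\mathbf{y}_m;\boldsymbol{q}_0)$ and extract a convergent subsequence whose limit lies in a compactification whose boundary strata are exactly the products $\mathcal{A}^{m_1,r_1}\times\mathcal{P}^{m_2,r_2}$ and $\mathcal{P}^{m_1,r_1}\times\mathcal{T}^{m_2,\chi_2}$ matched up by Definition \ref{definition-floer-morse-data}. After passing to a subsequence, I would first fix the combinatorial type of the underlying mushroom $P_i=(\dot{F}_i,\pi_{0,i},\dot{C}_i,T_i,\sigma_i,l_i)$, i.e., fix the topology of $\dot{F},\dot{C},T$ and the permutation $\sigma$, so that only the conformal moduli of $\dot{C}$, the lengths $l$, and the maps $(u_i,\gamma_i)$ vary.

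The central \emph{a priori} input is twofold. First, the $C^0$-boundedness in Lemma \ref{lemma-bound} confines the image of every $u_i$ to $T^*\overline{D}_R$ and the image of every $\gamma_i$ to $\overline{D}_R$, so there is no escape to infinity in the target. Second, the action inequality (\ref{eq-F-energy}) gives a uniform bound on the geometric energy $\int_{\dot F_i} u_i^*\omega$ together with a uniform bound on the total Morse energy $\sum_{\tilde e}\int_{\tilde e}\gamma_i^*(df_{i_r,h_r}-df_{i_l,h_l})$, since $\mathcal{A}(\boldsymbol{q}_0)-\sum_j\mathcal{A}(\mathbf{y}_j)$ is fixed. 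With these bounds in hand, I would run the standard SFT/Gromov compactness argument of \cite{colin2020applications} and \cite{abouzaid2011topological} on the holomorphic side and the Morse flow tree compactness argument of Lemma \ref{lemma-cpt} on the tree side, in parallel, and glue the two along the paste points.

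The possible degenerations then fall into the following families, and I would treat each in turn. (a) Lengths of inner or stem edges $l_i(e)\to\infty$: as in Lemma \ref{lemma-cpt}, this produces a breaking of the folded ribbon tree into two pieces joined by a common critical point, yielding a boundary stratum of the form $\mathcal{P}^{m_1,r_1}\times\mathcal{T}^{m_2,\chi_2}$ with $r_1+\kappa-\chi_2=r$; lengths of marginal edges going to infinity are excluded exactly as in Lemma \ref{lemma-cpt}, using the $C^0$ bound and the absence of interior critical points of $f_{i,j}-f_{i',k}$ along a closed-loop contribution. (b) Lengths going to zero for an inner edge are handled by the same $E_{\succeq e'}$ closed-loop argument as in Lemma \ref{lemma-cpt}, and for a marginal external edge by the conjugation move of Figure \ref{fig-cpt-mar}, both of which extend the family inside $\mathcal{H}^r$. (c) Conformal degeneration of $\dot{C}_i$: boundary punctures coming together on a disk, or conformal moduli pinching, produce either strip breaking along the $\mathbf{y}_j$, which yields the Floer-side boundary stratum $\mathcal{A}^{m_1,r_1}\times\mathcal{P}^{m_2,r_2}$, or nodal degenerations that can be ruled out by a standard index and energy count when $\mathrm{ind}\leq 1$. (d) Holomorphic bubbling: sphere bubbles are excluded by exactness of $\omega$, and disk bubbles on a single $\Gamma_{df_{i,k}}$ are excluded because each $\Gamma_{df_{i,k}}$ is a graphical exact Lagrangian; the only remaining possibility is strip breaking, already covered by (c).

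The main obstacle, in my view, is (c) combined with the interface condition (\ref{enum-paste-point}): one has to show that when a stem disk $C_i$ degenerates or a length $l(e_{z_i})$ approaches zero, the paste points $\zeta_{ij}$ converge to a consistent limit that matches the Morse endpoint $\gamma(v_{st}(\tilde e))$ on the tree side, and that no paste point escapes to the boundary $\partial D_R$. The $C^0$-bound in Lemma \ref{lemma-bound}, proved via the split form (\ref{eq-split}) and condition (C4), supplies exactly this control: the trichotomy argument there, in particular the analysis of the third case where a paste point attains the maximum radius, shows that the paste points stay in a compact subset of the interior. Once the paste points are controlled, the gluing picture matches the two-sided compatibility built into Definition \ref{definition-floer-morse-data}, and the compactification is precisely the union of the codimension-one strata listed there.
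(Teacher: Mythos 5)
Your plan correctly invokes the two global inputs (the $C^0$-bound of Lemma \ref{lemma-bound} and the energy bound (\ref{eq-F-energy})) and correctly lists the standard degenerations (edge lengths to $0$ or $\infty$, strip breaking, exclusion of sphere/disk bubbles by exactness). However, it misses the degeneration that is actually the point of this lemma and which the paper's proof is built around: a branch point of the $\kappa$-fold cover $\pi_0\colon\dot F\to\dot C$ approaching the interface arc $\alpha_i$. This is not a nodal degeneration of $\dot C_i$ in the usual Deligne--Mumford sense, and your dismissal ``nodal degenerations can be ruled out by a standard index and energy count'' does not engage with it.

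Concretely, two things are absent. First, when $C_i$ is a stem disk (or a marginal disk with more than one marked point) and a branch point hits $\alpha_i$, the paper shows this is a codimension-two phenomenon by introducing an auxiliary moduli space $\mathcal{H}^r_{res}$ in which the compatibility condition $\sigma(e_{z_i})=\sigma_i$ is broken by one extra transposition and replaced by the pointwise diagonal constraint (\ref{eq-same-fiber}); the dimension count for $\mathcal{H}^r_{res}$ drops by $2$, so it is empty when $\operatorname{ind}\le 1$. That argument is specific to the mushroom setup (the node lands on $\alpha_i$, so only one side of the normalization is holomorphic) and does not follow from a generic ``index and energy count''. Second, and more importantly, when $C_i$ is a marginal disk with a single simple branch point, the degeneration is codimension one and must not be excluded: the quadrilateral $\dot F_i$ collapses to an arc on $T^*_{\zeta_{ij}}\mathbb{R}^2$ and the branch point is traded for a marginal vertex on the tree side, preserving the count $r=r_1+r_2$. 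Your proposal has no mechanism for this Floer-to-Morse handoff, nor for the converse direction, which relies on condition (M2) (the split almost complex structure $J(t)$ near such quadrilaterals) to guarantee the reverse gluing is unique. Without Case (2) the putative compactification is missing an entire family of codimension-one boundary points, and without (M2) one cannot show the two families on either side of that wall match up; so as written the plan does not establish the statement.
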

\begin{proof}
    Consider a sequence of elements $(u_k,\gamma_k)\in\mathcal{H}^r(\mathbf{y}_1,\dots,\mathbf{y}_m;\boldsymbol{q}_0)$ such that as $k\to\infty$, a branch point on $C_i$ approaches $\alpha_i$ for some $i$.
    There are two cases. 

    \noindent
    (1) On some $C_i$, $m_i>0$ or $C_i$ has more than 1 marked point.
    See Figure \ref{fig-cpt-F} for an example.

    \begin{figure}[ht]
        \centering
        \includegraphics[width=10cm]{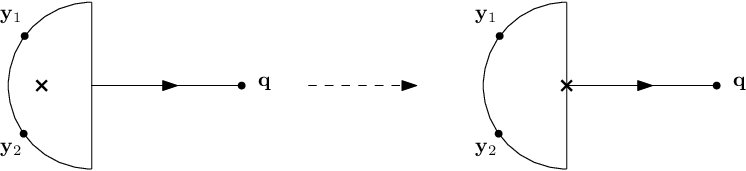}
        \caption{}
        \label{fig-cpt-F}
    \end{figure}
    
    \begin{claim}
        The above degeneration in case (1) is of codimension 2 and cannot occur if $\operatorname{dim}\mathcal{H}^r(\mathbf{y}_1,\dots,\mathbf{y}_m,\boldsymbol{q}_0)\leq 1$.
    \end{claim}
    \begin{proof}
        Consider the moduli space $\mathcal{H}_{res}^r(\mathbf{y}_1,\dots,\mathbf{y}_m;\boldsymbol{q}_0)$ which is defined similarly to $\mathcal{H}^r(\mathbf{y}_1,\dots,\mathbf{y}_m;\boldsymbol{q}_0)$ with only one difference: we replace (6) of Definition \ref{definition-mushroom} by
        \begin{enumerate}
            \item[(6')] Denote by $\beta_{li}, \beta_{ri}\in \partial\dot{C}_i$ the edges adjacent to $\alpha_i$ at $a_i,b_i$, respectively.
            The preimage of $\alpha_i$ on $\partial \dot{F}$ consists of $\kappa$ arcs $\alpha_{i1},\dots,\alpha_{i\kappa}$. 
            Denote the adjacent arcs of $\alpha_{ij}$ over $\alpha_{li},\alpha_{ri}$ by $\alpha_{lij},\alpha_{rij}$, respectively. 
            We define a permutation element $\sigma_i\in \mathfrak{S}_\kappa$ such that $h(\alpha_{rij})=\sigma_i(h(\alpha_{lij}))$ for $j=1,\dots,\kappa$. 
            Denote the unique edge adjacent to $z_i$ by $e_{z_i}$. 
            Then there exists $j$ such that $\sigma(e_{z_i})=\sigma_i$ for $i\neq j$ and $\sigma(e_{z_j})=\sigma_j\tau_{vw}$, where $v\neq w\in\{1,\dots,\kappa\}$ and $\tau_{vw}$ is the transposition of $v,w$.
            Furthermore, we require that 
            \begin{equation}
            \label{eq-same-fiber}
                \gamma(\tilde{v}_{j,v})=\gamma(\tilde{v}_{j,w}),
            \end{equation}
            where $\tilde{v}_{j,b}$ denotes the starting vertex of $\tilde{e}\in\pi^{-1}(e_{z_j})$ with $h_l(\tilde{e})=b$.
        
        \end{enumerate}
        
        Similar to Lemma \ref{lemma-trans-E}, one can show that $\mathcal{H}_{res}^r(\mathbf{y}_1,\dots,\mathbf{y}_m;\boldsymbol{q}_0)$ is a smooth manifold of dimension
        \begin{equation*}
            |\boldsymbol{q}_0|-|\mathbf{y}_1|-\dots-|\mathbf{y}_m|+m-3.
        \end{equation*}
        Roughly speaking, the shift of dimension $2$ compared to (\ref{eq-dim-mix}) comes from the constraint (\ref{eq-same-fiber}).
        Since 
        \begin{equation*}
            \operatorname{dim}\mathcal{H}_{res}(\mathbf{y}_1,\dots,\mathbf{y}_m;\boldsymbol{q}_0)\leq\operatorname{dim}\mathcal{H}(\mathbf{y}_1,\dots,\mathbf{y}_m;\boldsymbol{q}_0)-2\leq-1, 
        \end{equation*}
        we see that $\mathcal{H}_{res}(\mathbf{y}_1,\dots,\mathbf{y}_m,\boldsymbol{q}_0)$ is empty.

        As $k\to\infty$, if a branch point on $C_i$ approaches $\alpha_i$ for some $i$, denote the limit of $(u_k,\gamma_k)$ by $(u_\infty,\gamma_\infty)$.
        Then $u_\infty$ is a nodal curve where the nodal point is mapped to $\alpha_i$ under $\pi_0$.
        After resolving the nodal point, we obtain a normalized curve $u_{norm}$.
        By definition, $(u_{norm},\gamma_\infty)$ is a smooth locus of $\mathcal{H}_{res}^{r-1}(\mathbf{y}_1,\dots,\mathbf{y}_m;\boldsymbol{q}_0)$.
        However, this is not possible.
        
        Therefore, the above degeneration in case (1) is of codimension 2 and cannot occur if $\operatorname{dim}\mathcal{H}^r(\mathbf{y}_1,\dots,\mathbf{y}_m,\boldsymbol{q}_0)\leq 1$.
    \end{proof}

    \noindent
    (2) $m_i=0$ and $C_i$ has only 1 marked point.
    
    In this case, without loss of generality, we assume $\kappa=2$.
    Now $\dot{F}_i=\pi_0^{-1}(\dot{C}_i)$ is a quadrilateral. 
    As the branch point approaches $\alpha_i$, the cross-ratio of $\dot{F}_i$ decreases to $0$ and $u_\infty(\dot{F}_i)=u_\infty(\pi_0^{-1}(\alpha_i))$.
    Let $u_{res}$ be defined by removing the component $\dot{F}_i$ from the domain of $u_\infty$, and let $\gamma_{res}$ be defined by viewing $z_i$ as a marginal vertex on the domain of $\gamma_\infty$.
    It is then easy to check that $(u_{res},\gamma_{res})$ is a smooth locus on the codimension-1 boundary of $\mathcal{H}^r(\mathbf{y}_1,\dots,\mathbf{y}_m;\boldsymbol{q}_0)$. 
    Therefore, the family $\{(u_k,\gamma_k)\}$ can be continued to another family of $\mathcal{H}^r(\mathbf{y}_1,\dots,\mathbf{y}_m;\boldsymbol{q}_0)$.

    \begin{figure}[ht]
        \centering
        \includegraphics[width=14cm]{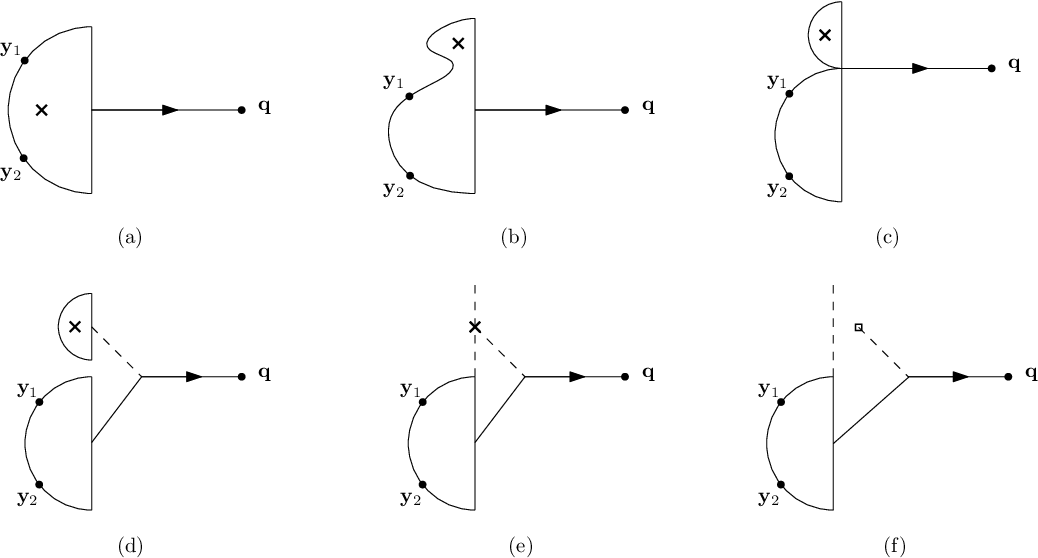}
        \caption{}
        \label{fig-cpt-F1}
    \end{figure}

    \begin{figure}[ht]
        \centering
        \includegraphics[width=10cm]{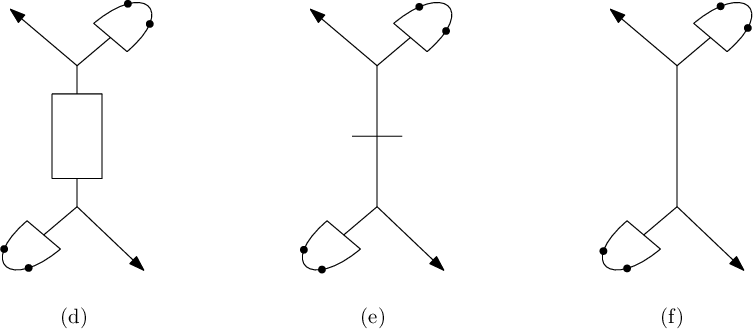}
        \caption{}
        \label{fig-cpt-F2}
    \end{figure}
    
    Figure \ref{fig-cpt-F1} shows an example of such a degeneration, where the stages from (a) to (f) illustrate how a branch point on $\dot{C}_i$ can be transformed to a marginal vertex.
    Case (e) represents the limit as $k\to\infty$, which is a degeneration of codimension 1.
    Figure \ref{fig-cpt-F2} shows what happens on $\dot{F}$ and $G$ from (d) to (f).

    Now consider the reverse procedure, i.e., as the marginal edge in case (f) gets longer and meets (e), we want to continue and reach (d). 
    Due to condition (M2), the quadrilateral curve in (d) equivalently lives in $\mathbb{C}^2$, where the Lagrangian boundaries are products of arcs on each copy of $\mathbb{C}$. 
    This condition ensures that (f) can be uniquely continued to (d), and we omit the details.
    
    \vskip.15in
    By (\ref{eq-F-energy}), the energy $\int_{\dot{F}}u^*\omega$ is bounded, hence the Gromov compactness applies here.
    The remaining types of degenerations are dealt with similarly as in \cite{abouzaid2011topological} and Lemma \ref{lemma-cpt}.

    In conclusion, the only codimension-1 degenerations are the required relations for defining an $A_\infty$-map.
    See Figure \ref{fig-cpt-F3}.
    
\end{proof}

\begin{figure}[ht]
    \centering
    \includegraphics[width=14cm]{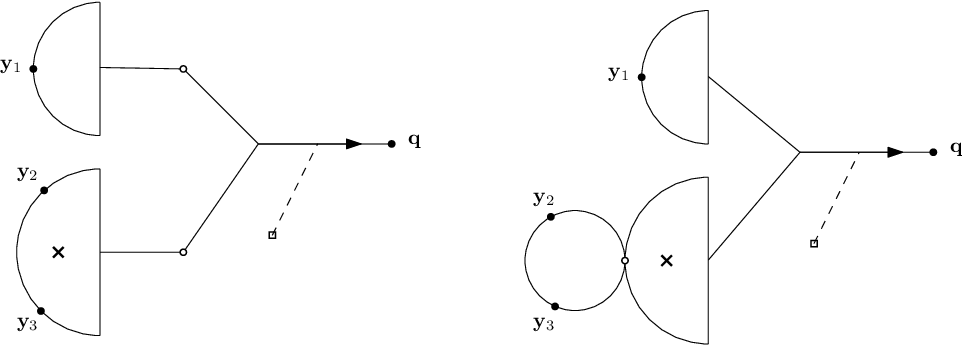}
    \caption{The small circles denote where breakings happen.}
    \label{fig-cpt-F3}
\end{figure}

Consider the functor $\mathcal{E}\colon\mathcal{F}^{gr}_\kappa(T^*\mathbb{R}^2)\to\mathrm{Morse}_{\kappa}(\mathbb{R}^2)$, which maps $\Gamma_{d\boldsymbol{f}}$ to $\boldsymbol{f}$ on the object level.
For each $m\geq1$, we define the map $\mathcal{E}^m$ by
\begin{gather*}
    \mathcal{E}^m\colon \operatorname{hom}(\boldsymbol{f}_{m-1},\boldsymbol{f}_{m})\otimes\dots\otimes\operatorname{hom}(\boldsymbol{f}_{0},\boldsymbol{f}_{1})\to\operatorname{hom}(\Gamma_{\boldsymbol{f}_{0}},\Gamma_{\boldsymbol{f}_{m}}),\\
    \mathcal{E}^m(\mathbf{y}_m,\dots,\mathbf{y}_1)\coloneqq \sum_{\boldsymbol{q}_0,r} \#\mathcal{H}^{r,\operatorname{ind}=0}(\mathbf{y}_1,\dots,\mathbf{y}_m;\boldsymbol{q}_0)\cdot\hbar^{r}\cdot\boldsymbol{q}_0.
\end{gather*}

\begin{proposition}
\label{prop-ainfty}
    $\mathcal{E}$ is an $A_\infty$-functor.
\end{proposition}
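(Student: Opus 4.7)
The plan is to apply the standard cobordism argument for $A_\infty$-functors to the one-dimensional components of $\mathcal{H}^r(\mathbf{y}_1,\dots,\mathbf{y}_m;\boldsymbol{q}_0)$. By Lemma \ref{lemma-trans-E}, the stratum of virtual dimension one is a smooth $1$-manifold for a generic choice of perturbation data, and by Lemma \ref{lemma-cpt-E} it admits a compactification whose codimension-one boundary strata consist exactly of the two types of degenerations pictured in Figure \ref{fig-cpt-F3}. Since the $\mathbb{Z}_2$-count of boundary points of a compact one-manifold vanishes, I expect the required $A_\infty$-functor relation to fall out by identifying each boundary stratum with a composition of $\mu^\bullet_{\mathrm{Fuk}}$ and $\mathcal{E}^\bullet$ or of $\mathcal{E}^\bullet$ and $\mu^\bullet_{\mathrm{Mor}}$.

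First I would enumerate the codimension-one strata. The Floer-breaking stratum, where an $A_\infty$ sub-disk pinches off from among the inputs of the mushroom, should contribute a product
\[
    \mathcal{R}^{\mathrm{ind}=0,\chi'}(\mathbf{y}_{i+1},\dots,\mathbf{y}_{i+j};\mathbf{y}')\times\mathcal{H}^{r-(\kappa-\chi'),\mathrm{ind}=0}(\mathbf{y}_1,\dots,\mathbf{y}_i,\mathbf{y}',\mathbf{y}_{i+j+1},\dots,\mathbf{y}_m;\boldsymbol{q}_0),
\]
which reassembles into the term $\mathcal{E}^{m-j+1}(\mathbf{y}_m,\dots,\mu^j_{\mathrm{Fuk}}(\mathbf{y}_{i+j},\dots,\mathbf{y}_{i+1}),\dots,\mathbf{y}_1)$; the $\hbar$-weights are consistent since $\hbar^{\kappa-\chi'}\cdot\hbar^{r-(\kappa-\chi')}=\hbar^r$. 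Symmetrically, the Morse-breaking stratum, where an internal stem edge of $T$ breaks along an intermediate critical tuple, should decompose the mushroom into several sub-mushrooms joined by a pure Morse flow tree living in some $\mathcal{M}^{\chi_0,\mathrm{ind}=0}$, producing terms of the form $\mu^k_{\mathrm{Mor}}(\mathcal{E}^{i_k}(\dots),\dots,\mathcal{E}^{i_1}(\dots))$. Setting the total boundary count to zero in $\mathbb{Z}_2\llbracket\hbar\rrbracket$ then yields the desired $A_\infty$-functor equation.

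The main obstacle is already handled inside Lemma \ref{lemma-cpt-E}, and the work of the present proof is to invoke it correctly: I must use that the branch-point-to-boundary degeneration on a stem disk (case (1) there) is genuinely codimension two via the restricted moduli space $\mathcal{H}^r_{res}$, and that the marginal-disk limit (case (2) there) continues smoothly into the interior via the marginal edge degeneration of Figure \ref{fig-cpt-F1}, which in turn relies crucially on condition (M2). Granted these exclusions and Gromov-compactness from the energy bound (\ref{eq-F-energy}), what remains is a careful but bookkeeping-heavy identification of the gluing maps on each side of the boundary with the combinatorial composition rules for the mushroom (permutations $\sigma$, heights $h_l,h_r$, paste points), which I expect to proceed along the same lines as the $\kappa=1$ analysis in \cite{abouzaid2011topological}.
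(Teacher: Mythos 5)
Your proposal is correct and takes essentially the same approach as the paper: the paper's proof is a single-sentence citation to Lemma \ref{lemma-cpt-E}, and you have supplied exactly the standard mechanics that citation encodes — using Lemmas \ref{lemma-trans-E} and \ref{lemma-cpt-E} to produce a compact one-manifold whose codimension-one boundary strata split into Floer-breaking ($\mathcal{E}\circ\mu_{\mathrm{Fuk}}$) and stem-edge Morse-breaking ($\mu_{\mathrm{Mor}}\circ\mathcal{E}$) pieces as in Figure \ref{fig-cpt-F3}, with the other potential degenerations excluded by cases (1) and (2) of that lemma, and then invoking the vanishing $\mathbb{Z}_2$-boundary count with the $\hbar$-bookkeeping you verify.
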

\begin{proof}
    This follows from the proof of Lemma \ref{lemma-cpt-E}.
\end{proof}

\subsection{The isomorphism}

\begin{lemma}
\label{lemma-E1}
    $\mathcal{E}^1$ induces an isomorphism on homology.
\end{lemma}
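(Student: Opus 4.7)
The plan is to show that $\mathcal{E}^1$, with respect to the action filtration on the underlying free $\mathbb{Z}_2\llbracket\hbar\rrbracket$-module, is identity-plus-strictly-action-decreasing, from which the isomorphism on homology follows automatically ($\mathcal{E}^1$ is already a chain map by the $m=1$ case of the $A_\infty$-functor relations). First I would identify the generators of both sides. Because each $\Gamma_{df_{0,i}}$ is graphical, any intersection point in $\Gamma_{df_{0,i}}\cap\Gamma_{df_{1,\sigma(i)}}$ is the graph of $df_{0,i}=df_{1,\sigma(i)}$ at a critical point of $f_{0,i}-f_{1,\sigma(i)}$, so $\pi_{\mathbb{R}^2}$ induces a canonical bijection $\mathbf{y}\leftrightarrow\boldsymbol{q}(\mathbf{y})$ between generators of $\operatorname{hom}(\Gamma_{d\boldsymbol{f}_0},\Gamma_{d\boldsymbol{f}_1})$ and of $\operatorname{hom}(\boldsymbol{f}_0,\boldsymbol{f}_1)$. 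This bijection preserves grading (Maslov index equals Morse co-index for graphical Lagrangians) and, directly from the definitions, preserves action: $\mathcal{A}(\boldsymbol{q}(\mathbf{y}))=\mathcal{A}(\mathbf{y})$.

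The next step is to isolate which mushrooms contribute at the top of the action filtration. By \eqref{eq-F-energy} any $(P,u,\gamma)\in\mathcal{H}^r(\mathbf{y};\boldsymbol{q}_0)$ satisfies $\mathcal{A}(\boldsymbol{q}_0)\le\mathcal{A}(\mathbf{y})$, with equality if and only if $\int u^*\omega=0$ and $\gamma$ is constant on every edge. Constancy of $\gamma$ on a marginal edge would require a zero of $d(f_{i_r,h_r}-f_{i_l,h_l})$; but for a marginal edge both sides border the same region of $D^2\setminus E_{ste}$, so $i_r=i_l$, while $h_r\neq h_l$ because $\sigma\neq\mathrm{id}$ on marginal edges, and this is ruled out by the defining assumption that differentials of functions in a single tuple $\boldsymbol{f}_i$ have no common zero. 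Hence $T$ is the trivial one-edge tree with $w=1$, $\gamma$ is the constant map at $\boldsymbol{q}(\mathbf{y})$, and there is no branching ($r=0$); then $\int u^*\omega=0$ together with the Lagrangian boundary conditions and the graphical property force $u$ to be constant at $\mathbf{y}$. Therefore the only action-preserving configuration is a single ``trivial mushroom'' $P_0$.

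The final analytic input is that $P_0$ is a regular, isolated point of $\mathcal{H}^{0,\mathrm{ind}=0}(\mathbf{y};\boldsymbol{q}(\mathbf{y}))$ contributing $1$ at $\hbar^0$. Near $P_0$ condition (M2) together with the split form \eqref{eq-split} of the $f_{i,j}$ make the linearized problem decouple at each paste point into $\kappa$ independent copies of the $\kappa=1$ Floer-to-Morse comparison, so the classical Fukaya--Oh count \cite{fukaya1997zero} supplies regularity and count $1$ in each factor. Combining, $\mathcal{E}^1(\mathbf{y})=\boldsymbol{q}(\mathbf{y})+\sum_{\mathcal{A}(\boldsymbol{q}_0)<\mathcal{A}(\mathbf{y}),\,k\ge 0} c_{\boldsymbol{q}_0,k}\,\hbar^k\,\boldsymbol{q}_0$, so $\mathcal{E}^1$ is upper unitriangular in the action filtration, hence a chain-level isomorphism and a fortiori a homology isomorphism. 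The main obstacle is this last analytic verification at $P_0$; everything else is formal once the action is controlled.
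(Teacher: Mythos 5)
Your approach is genuinely different from the paper's. The paper's proof is short and citation-based: it observes that $\mathcal{E}^1|_{\hbar=0}$ is a homology isomorphism by invoking \cite[Theorem 1.1]{abouzaid2011topological} and \cite[Lemma 6.5]{honda2022jems}, and then runs a standard $\hbar$-adic bootstrap to promote this to an isomorphism over $\mathbb{Z}_2\llbracket\hbar\rrbracket$ --- injectivity by reducing mod $\hbar$, surjectivity by an inductive lifting of preimages in powers of $\hbar$. Your proof instead attempts to show directly that $\mathcal{E}^1$ is upper-unitriangular with respect to the action filtration under the canonical bijection $\mathbf{y}\leftrightarrow\boldsymbol{q}(\mathbf{y})$, which would give a \emph{chain-level} isomorphism (stronger than what the lemma asks for) and bypass the bootstrap entirely.

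The logic of the triangularity argument is sound: the action inequality (\ref{eq-F-energy}) does bound contributions, the constancy analysis on marginal edges correctly uses that $df_{i,j}-df_{i,k}$ is nowhere vanishing, and since the morphism spaces are finitely generated, upper-unitriangular implies invertible. However, there is a real gap at the step you flag as ``the main obstacle.'' You assert that the unique trivial mushroom $P_0$ is regular and contributes exactly $1$ at $\hbar^0$ by reducing to Fukaya--Oh, but Fukaya--Oh only treats $\kappa=1$, and the decoupling argument you sketch via (M2) and the split form (C4) is not a routine reduction: for $\kappa>1$ one must also rule out nontrivial branched covers and marginal configurations at fixed top action, verify that the constant solution is cut out transversely after the domain-dependent perturbations are turned on, and check that the edge-length parameter of the modified ribbon tree does not produce a positive-dimensional family of trivial solutions. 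These are precisely the analytic ingredients encapsulated in the results cited by the paper. In effect your proof re-derives the content of \cite[Theorem 1.1]{abouzaid2011topological} rather than quoting it, and the hard part is left as an assertion. Either carry out the regularity verification for $P_0$ in detail (including the dimension count for the modified tree), or fall back to citing the $\hbar=0$ isomorphism and completing with the $\hbar$-adic argument as the paper does.
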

\begin{proof}
    First, we observe that the restriction of $\mathcal{E}^1$ to $\hbar=0$ induces a map on homology:
    \begin{equation*}
        \mathcal{E}^1|_{\hbar=0}\colon \operatorname{hom}(\Gamma_{\boldsymbol{f}_{0}},\Gamma_{\boldsymbol{f}_{1}})|_{\hbar=0}\to \operatorname{hom}(\boldsymbol{f}_{0},\boldsymbol{f}_{1})|_{\hbar=0}.
    \end{equation*}
    This map is an isomorphism by \cite[Theorem 1.1]{abouzaid2011topological} and \cite[Lemma 6.5]{honda2022jems}.
    
    To show the injectivity of $\mathcal{E}^1$, suppose that there exists $\mathbf{a}\in \operatorname{hom}(\Gamma_{\boldsymbol{f}_{0}},\Gamma_{\boldsymbol{f}_{1}})$ such that $\mathcal{E}^1([\mathbf{a}])=0$ and $[\mathbf{a}]\neq0$.
    We can write $\mathbf{a}=\sum_{i\geq0}\hbar^i\mathbf{a}_i$, where $\mathbf{a}_i \in \operatorname{hom}(\Gamma_{\boldsymbol{f}_{0}},\Gamma_{\boldsymbol{f}_{1}})|_{\hbar=0}$. Without loss of generality, we may assume that $[\mathbf{a}_0]\neq0$. 
    Setting $\hbar=0$, we have $\mathcal{E}^1([\mathbf{a}_0])=\mathcal{E}^1([\mathbf{a}])=0$.
    Hence, $\mathcal{E}^1|_{\hbar=0}([\mathbf{a}_0])=0$, which implies $[\mathbf{a}_0]=0$ since $\mathcal{E}|_{\hbar=0}$ is an isomorphism. 
    This leads to a contradiction. 
    Therefore, $\mathcal{E}^1$ is injective.
    
    To prove the surjectivity of $\mathcal{E}^1$, it suffices to show that any $[\mathbf{b}]\in \operatorname{hom}(\boldsymbol{f}_{0},\boldsymbol{f}_{1})$ is in the image of $\mathcal{E}^1$. 
    Since $\mathcal{E}^1|_{\hbar=0}$ is an isomorphism, there exists $\mathbf{a}_0\in \operatorname{hom}(\Gamma_{\boldsymbol{f}_{0}},\Gamma_{\boldsymbol{f}_{1}})|_{\hbar=0}$ such that $\mathcal{E}^1([\mathbf{a}_0])\equiv[\mathbf{b}]\,(\mathrm{mod\,\,\hbar})$. 
    Write $\mathbf{b}-\mathcal{E}^1(\mathbf{a}_0)=\partial A + \hbar B$, and let $\mathbf{b}_1=B|_{\hbar=0}$.
    Then there exists $\mathbf{a}_1\in \operatorname{hom}(\Gamma_{\boldsymbol{f}_{0}},\Gamma_{\boldsymbol{f}_{1}})|_{\hbar=0}$ such that $\mathcal{E}^1([\mathbf{a}_1])\equiv[\mathbf{b}_1]\,(\mathrm{mod\,\,\hbar})$. 
    By repeating this procedure, 
    %we see for $k>0$,
    %\begin{equation}
    %    \mathcal{F}(\mathbf{a}_0+\hbar\mathbf{a}_1+\dots+\hbar^k\mathbf{a}_k)\equiv\mathbf{b}_0\,(\mathrm{mod}\,\,\hbar^{k+1}). 
    %\end{equation}
    %Let $k\to\infty$, 
    we get $\mathcal{E}^1(\sum_{i\geq0}[\mathbf{a}_i]\hbar^i)=[\mathbf{b}]$. Hence, $\mathcal{E}^1$ is surjective.

    It then follows that $\mathcal{E}^1$ induces an isomorphism on homology.
\end{proof}

\begin{proposition}
\label{prop-equiv}
    $\mathcal{E}$ is an $A_\infty$-equivalence. 
\end{proposition}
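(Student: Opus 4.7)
The plan is to deduce the proposition from the standard characterization of $A_\infty$-equivalences: an $A_\infty$-functor $\mathcal{F}$ is a quasi-equivalence (i.e., admits a quasi-inverse $A_\infty$-functor) if and only if (i) $\mathcal{F}^1$ is a quasi-isomorphism on every morphism complex, and (ii) the induced functor on cohomology categories is essentially surjective; see, e.g., \cite[Section 1c]{seidel2008fukaya}. Both conditions are essentially handed to us by the preceding material.

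First, I would verify (i) by direct appeal to Lemma \ref{lemma-E1}, which asserts exactly that $\mathcal{E}^1\colon\operatorname{hom}(\Gamma_{d\boldsymbol{f}_0},\Gamma_{d\boldsymbol{f}_1})\to\operatorname{hom}(\boldsymbol{f}_0,\boldsymbol{f}_1)$ induces an isomorphism on homology for every pair of objects. Thus $\mathcal{E}$ is cohomologically full and faithful.

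Second, I would verify (ii) by unpacking the object-map. On the object level, $\mathcal{E}$ sends $\Gamma_{d\boldsymbol{f}}\mapsto\boldsymbol{f}$. By the setup of Section \ref{section-disk} and Section \ref{section-mix-data}, every object of $\operatorname{Morse}_\kappa(\mathbb{R}^2)$ under consideration (satisfying (C1)--(C4)) arises as $\boldsymbol{f}$ for some graphical Lagrangian $\Gamma_{d\boldsymbol{f}}\in\mathcal{F}^{gr}_\kappa(T^*\mathbb{R}^2)$, so the object-map is in fact a bijection and essential surjectivity is immediate.

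I do not anticipate a serious obstacle. The only mild subtlety worth flagging is that the coefficient ring $\mathbb{Z}_2\llbracket\hbar\rrbracket$ is not a field but a complete local ring, so one cannot directly cite the field-coefficient form of the quasi-inverse construction. Fortunately the standard homological-perturbation/$A_\infty$-obstruction argument lifts cohomology-level data to chain-level data order-by-order in $\hbar$; this mirrors the $\hbar$-adic induction already used to prove surjectivity in Lemma \ref{lemma-E1}, so no new ideas are needed. Once the quasi-inverse $A_\infty$-functor is produced inductively in this manner, both compositions with $\mathcal{E}$ are homotopic to the identity by the same type of induction, which completes the proof.
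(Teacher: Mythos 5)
Your proof is correct and takes essentially the same approach as the paper, which simply cites Proposition \ref{prop-ainfty} together with Lemma \ref{lemma-E1}; you have merely spelled out the standard quasi-equivalence criterion explicitly and noted that the object map is a bijection. The remark about $\mathbb{Z}_2\llbracket\hbar\rrbracket$ not being a field (so that the quasi-inverse must be constructed by $\hbar$-adic induction rather than by the field-coefficient perturbation lemma) is a legitimate subtlety that the paper's one-line proof leaves implicit; your sketch of why it is harmless is sound.
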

\begin{proof}
    This follows from Proposition \ref{prop-ainfty} and Lemma \ref{lemma-E1}.
\end{proof}

\begin{definition}
    The Hecke algebra $H_\kappa$ is a unital $\mathbb{Z}_2\llbracket\hbar\rrbracket$-algebra generated by $T_1,\dots,T_{\kappa-1}$, with relations
    \begin{align*}
        T_{i}^2&=1+\hbar T_i,\\
        T_iT_j&=T_jT_i\,\,\,\mathrm{for}\,\,\,|i-j|>1,\\
        T_iT_{i+1}T_i&=T_{i+1}T_{i}T_{i+1}.
    \end{align*}
\end{definition}

\begin{theorem}
    $\operatorname{End}_{Mor}(\sqcup_{i=1}^\kappa T^*_{q_i}\mathbb{R}^2)\simeq H_\kappa$.
\end{theorem}
\begin{proof}
    By Proposition \ref{prop-equiv} and \cite[Theorem 1.2]{tian2022example},
    \begin{equation*}
        \operatorname{End}_{Mor}(\sqcup_{i=1}^\kappa T^*_{q_i}\mathbb{R}^2)\simeq\operatorname{End}_{Fuk}(\sqcup_{i=1}^\kappa T^*_{q_i}\mathbb{R}^2)\simeq H_\kappa.
    \end{equation*}
    
\end{proof}

\appendix

\section{Transversality}
\label{section-appendix-trans}

We use the notations from Section \ref{section-morse}.
To prove the transversality of $\mathcal{M}(\boldsymbol{q}_1,\dots,\boldsymbol{q}_m;\boldsymbol{q}_0)$, we follow the similar approach as in \cite[Section 12]{fukaya1997zero}.
For notation convenience, given $(T,\sigma,l)\in\mathcal{T}^{m,\chi}$, we define a map
\begin{equation}
    \rho\colon E\to \mathfrak{S}_\kappa,\quad \rho(e)=\sigma(e)\cdot\sigma^{-1}(e_0).
\end{equation}
% which satisfies 
% \begin{enumerate}
%     \item $\rho(e_0)=\operatorname{id}$;
%     \item For each $v\in V_{inn}$ and each $i=1,\dots,|v|-1$,
%     \begin{equation*}
%         \rho(e_{v,i})=\sigma(e_{v,i-1})\circ\dots\circ\sigma(e_{v,1})\circ\rho(e_{v,0}).
%     \end{equation*}
% \end{enumerate}
% Clearly such a $\rho$ uniquely exists.
For a given $T$, we denote the set of length functions $l$ by $\mathcal{L}_T$.

Let $v^{int}_0=v_{st}(e_0)$ and $v^{int}_{i}=v_{en}(e_i)$ for $i=1,\dots,m$.
It is possible that $v^{int}_i=v^{int}_j$, and we write
\begin{equation*}
    v^{int}_0=\dots=v^{int}_{i_1-1}\neq v^{int}_{i_1}=\dots=v^{int}_{i_2-1}\neq v^{int}_{i_2}=\dots\neq v^{int}_{i_w}=\dots=v^{int}_m.
\end{equation*}
We will define a map
\begin{equation}
    \Psi_{ste}\colon S^\kappa\times \mathcal{L}_T\to S^{\kappa w}.
\end{equation}
For each $1\leq h\leq w$, there is a unique path joining $v^{int}_{i_h}$ and $v^{int}_0$, and the ordered edges along this path are denoted by $e^h_{j_1},\dots,e^h_{j_{c_h}}$.
Define
\begin{equation}
    \Psi_{ste,h}(l)\coloneqq \operatorname{Exp}_{V^h_1}(l(e^h_{j_1}))\circ\dots\circ\operatorname{Exp}_{V^h_{c_h}}(l(e^h_{j_h})),
\end{equation}
where $V^h_b=(V^h_{b,1},\dots,V^h_{b,\kappa})$,
\begin{equation}
\label{eq-flow}
    V^h_{b,i}=\nabla_g\left(f_{i_r(e^h_{j_{b}}),\,\sigma(e^h_{j_{b}})\circ\rho(e^h_{j_{b}})(i)}-f_{i_l(e^h_{j_{b}}),\,\rho(e^h_{j_{b}})(i)}\right),
\end{equation}
and $\operatorname{Exp}_{V}(t)$ denotes the time-$t$ flow of $V$ on $S^\kappa$. 
We then define
\begin{equation*}
    \Psi_{ste}(\boldsymbol{q},l)\coloneqq \left(\Psi_{ste,1}(l)(\boldsymbol{q}),\dots,\Psi_{ste,w}(l)(\boldsymbol{q})\right).
\end{equation*}

\begin{figure}[ht]
    \centering
    \includegraphics[width=12cm]{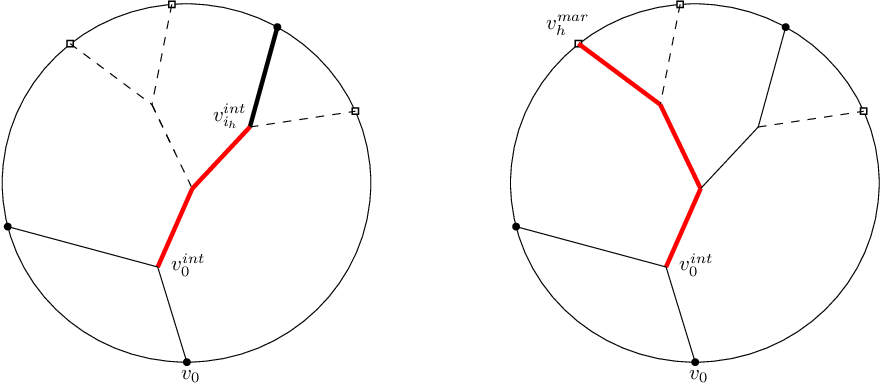}
    \caption{The red path corresponds to $\Psi_{ste,h}$ (resp. $\Psi_{mar,h}$) in the left (resp. right) figure. The perturbation of the metric occurs on all such red arcs.}
    \label{fig-trans}
\end{figure}

Denote the vertices of $V_{mar}$ by $v^{mar}_1,\dots,v^{mar}_\xi$.
For each $1\leq h\leq \xi$, there is a unique path joining $v^{mar}_{h}$ and $v^{int}_0$, and the ordered edges along this path are denoted by $\hat{e}^{h}_{j_1},\dots,\hat{e}^{h}_{j_{d_h}}$.
Define
\begin{equation}
    \Psi_{mar,h}(l)\coloneqq \operatorname{Exp}_{V^h_1}(l(\hat{e}^{h}_{j_1}))\circ\dots\circ\operatorname{Exp}_{V^h_{d_h}}(l(\hat{e}^{h}_{j_h})),
\end{equation}
where $V^h_b$ is defined the same as in (\ref{eq-flow}).
Then we define
\begin{gather*}
    \Psi_{mar}\colon S^\kappa\times \mathcal{L}_T\to S^{\kappa \xi},\\
    \Psi_{mar}(\boldsymbol{q},l)\coloneqq \left(\Psi_{mar,1}(l)(\boldsymbol{q}),\dots,\Psi_{mar,\xi}(l)(\boldsymbol{q})\right).
\end{gather*}
We combine $\Psi_{ste}$ and $\Psi_{mar}$ to obtain the map
\begin{equation*}
    \Psi\colon S^\kappa\times \mathcal{L}_T\to S^{\kappa(w+\xi)},\quad \Psi=(\Psi_{ste},\Psi_{mar}).
\end{equation*}

Let $W^-_{\boldsymbol{q}_{i,\rho(e_i)}}(\boldsymbol{f}_i,\boldsymbol{f}_{i+1})$ be the unstable manifold of the gradient flow of 
\begin{equation*}
    \left(f_{i,\rho(e_i)(1)}-f_{i+1,\sigma(e_i)\circ\rho(e_i)(1)},\dots,f_{i,\rho(e_i)(\kappa)}-f_{i+1,\sigma(e_i)\circ\rho(e_i)(\kappa)}\right)
\end{equation*}
in $S^{\kappa}$, where $\boldsymbol{q}_{i,\rho(e_i)}=(q_{\rho(e_i)(1)},\dots,q_{\rho(e_i)(\kappa)})$.

Denote $\triangle(i,j)=\{(q_1,\dots,q_\kappa)\in S^\kappa\,|\,q_i=q_j\}$, which admits a stratification where the largest stratum is of dimension $2\kappa-2$.

For $e\in E_{mar}\cap E_{ext}$, $\sigma(e)$ is a transposition which exchanges the positions $\alpha(e),\beta(e)$ with $\alpha(e)<\beta(e)$.

The following lemma holds by definition:
\begin{lemma}
    There is a 1-1 correspondence
    \begin{align*}
        \mathcal{M}(\boldsymbol{q}_1,\dots,\boldsymbol{q}_m;\boldsymbol{q}_0)\simeq
        \Psi^{-1}(A).
    \end{align*}
    where
    \begin{equation*}
        A\coloneqq\left(\prod_{h=1}^w\bigcap_{j=i_h}^{i_{h+1}-1} W^-_{\boldsymbol{q}_{j,\rho(e_j)}}(\boldsymbol{f}_j,\boldsymbol{f}_{j+1}),\,\prod_{h=1}^\xi \triangle\left(\rho^{-1}(\hat{e}^h_{j_1})(\alpha(\hat{e}^h_{j_1})),\rho^{-1}(\hat{e}^h_{j_1})(\beta(\hat{e}^h_{j_1}))\right)\right).
    \end{equation*}
\end{lemma}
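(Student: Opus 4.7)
The plan is to write the bijection down explicitly and then check that the defining conditions on each side translate term by term; this is essentially a bookkeeping exercise, as the lemma statement itself suggests. Given $(G_*,\gamma)\in\mathcal{M}(\boldsymbol{q}_1,\dots,\boldsymbol{q}_m;\boldsymbol{q}_0)$, I would associate the pair $(\boldsymbol{q},l)$ where $l$ is the length function recorded in the folded ribbon tree $T_*$ underlying $G_*$, and $\boldsymbol{q}\in S^\kappa$ is the tuple $(\gamma(\tilde v_1),\dots,\gamma(\tilde v_\kappa))$ with $\tilde v_1,\dots,\tilde v_\kappa$ the preimages of $v_0^{int}$ in $\tilde V$ ordered via $h_l$ on the outgoing edge $e_0$ (noting $\rho(e_0)=\mathrm{id}$ so $h_l=h_r$ there). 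In the reverse direction, from $(\boldsymbol{q},l)$ one reconstructs $\gamma$ on each edge of $G$ by integrating the gradient flow of the appropriate $f_{i_r,h_r}-f_{i_l,h_l}$; because $T$ is a tree and the lengths $l$ are prescribed, this is uniquely determined once $\gamma$ is fixed at $v_0^{int}$.

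Next, I would verify that the two conditions defining membership in $\Psi^{-1}(A)$ match conditions (2) and (3) of the moduli space. For the stem factor: the composition $\Psi_{ste,h}(\boldsymbol{q},l)$ is by construction $\gamma$ evaluated at $\pi^{-1}(v_{i_h}^{int})$, re-indexed by the permutation $\rho(e_{i_h})$, since the vector field $V_{b,i}^h$ in (\ref{eq-flow}) is precisely the gradient acting on the $i$-th lift of the path from $v_0^{int}$ to $v_{i_h}^{int}$ under this relabeling. The requirement that each incoming stem edge $e_j$ with $i_h\le j<i_{h+1}$ flow to the critical tuple $\boldsymbol{q}_j$ (up to the permutation recorded by $\sigma$) is then exactly the statement that $\Psi_{ste,h}(\boldsymbol{q},l)$ lies in $\bigcap_{j=i_h}^{i_{h+1}-1}W^-_{\boldsymbol{q}_{j,\rho(e_j)}}(\boldsymbol{f}_j,\boldsymbol{f}_{j+1})$. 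For the marginal factor: at each $v_h^{mar}$, item (5) of the ribbon graph definition forces the two lifts $\tilde e_1,\tilde e_2\in\pi^{-1}(\hat e^h_{j_1})$ with $h_l\neq h_r$ to share a common starting vertex in $\tilde V_{mar}$. Under the $\rho$-indexing, these two lifts occupy the coordinates $\rho^{-1}(\hat e^h_{j_1})(\alpha(\hat e^h_{j_1}))$ and $\rho^{-1}(\hat e^h_{j_1})(\beta(\hat e^h_{j_1}))$, so the coincidence of their endpoints is exactly the diagonal condition.

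The one place that requires genuine care, and the step I expect to be the main obstacle despite the statement being ``by definition,'' is the permutation bookkeeping. One must check that tracing along the path from $v_0^{int}$ to $v_{i_h}^{int}$ (resp.\ to $v_h^{mar}$) and composing the edge permutations $\sigma(\cdot)$ under the cyclic-order rule at each inner vertex from property (3) of a folded ribbon tree produces exactly the permutation $\rho$ appearing in the definition of $V_{b,i}^h$, so that the $i$-th component of the $S^\kappa$-flow in $\Psi$ corresponds to the correct lift in $\tilde E$ with the compatible values of $h_l,h_r$. This is a straightforward induction on the number of edges in the path, using the identity $\sigma(e_{v,0})=\sigma(e_{v,|v|-1})\cdots\sigma(e_{v,1})$ at each inner vertex, but it must be carried out carefully to align both sides of the bijection.
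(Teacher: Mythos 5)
Your proposal takes the same route the paper does: the paper offers no proof at all beyond the sentence "the following lemma holds by definition," so your explicit construction of the bijection and term-by-term matching of conditions is a more detailed version of exactly what is being asserted. The core of the argument is correct: $l$ on one side and the recorded length data in $T_*$ on the other are the same, $\boldsymbol{q}$ is the value of $\gamma$ at the vertices over $v_0^{int}$ with the $h_l$-ordering on $e_0$, and the stem and marginal factors of $A$ unwind to conditions (3) and (5) of the moduli-space / ribbon-graph definitions respectively. You are also right to single out the permutation bookkeeping as the one place that needs genuine care.

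Two small inaccuracies in that bookkeeping. First, "$\rho(e_0)=\mathrm{id}$ so $h_l=h_r$ there" is not correct: $\rho(e_0)=\sigma(e_0)\sigma(e_0)^{-1}$ is always the identity, but on a lift of $e_0$ we have $h_r=\sigma(e_0)\circ h_l$, so $h_l=h_r$ would require $\sigma(e_0)=\mathrm{id}$, which is not assumed. This does not affect the bijection, since the relevant normalization for $\Psi$ is indeed that $\rho(e_0)=\mathrm{id}$, and ordering by $h_l$ on $e_0$ is the compatible choice; but the justification you gave for it is off. Second, $\rho$ is not literally "the composition of edge permutations along the path" — it is the global formula $\rho(e)=\sigma(e)\sigma(e_0)^{-1}$, and converting this to a path-local description via the vertex identity $\sigma(e_{v,0})=\sigma(e_{v,|v|-1})\cdots\sigma(e_{v,1})$ brings in the $\sigma$'s of sibling edges that are not on the path (e.g., at the root vertex with edges $e_0,e_1,e_2$, one gets $\rho(e_1)=\sigma(e_2)^{-1}$, not a product of permutations of edges lying on the $e_0$-to-$e_1$ path). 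You do acknowledge the induction "must be carried out carefully," and the claim you actually need — that the $i$-th coordinate of the $\Psi$-flow tracks the strand of $G$ whose $h_l$-label on an edge $e$ is $\rho(e)(i)$ — is true and is exactly what the formula for $V^h_{b,i}$ encodes; you should just phrase the induction in terms of the identity $\rho(e)=\sigma(e)\sigma(e_0)^{-1}$ rather than a naive path-product.
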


Denote the set of perturbation data on $T$ by $\mathcal{K}_T$.
To apply the Sard-Smale Theorem, we define another map which allows the variation of the metric:
\begin{gather}
    \widehat{\Psi}\colon S^\kappa\times \mathcal{L}_T\times\mathcal{K}_T\to S^{\kappa(w+\xi)},\quad \widehat{\Psi}(\boldsymbol{q},l,g)=\Psi_g(\boldsymbol{q},l),
\end{gather}
where $\Psi_g$ is the map $\Psi$ with the specified perturbation data $g\colon T\to\mathfrak{M}$.

\begin{lemma}
\label{lemma-fredholm}
    $\widehat{\Psi}^{-1}(A)$ is a smooth submanifold of $S^\kappa\times \mathcal{L}_T\times\mathcal{K}_T$.
    Moreover, the projection $\widehat{\Psi}^{-1}(A)\to\mathcal{K}_T$ is a Fredholm map of index
    \begin{equation*}
        |\boldsymbol{q}_0|-|\boldsymbol{q}_1|-\dots-|\boldsymbol{q}_m|+m-2.
    \end{equation*}
\end{lemma}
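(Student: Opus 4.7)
My plan is a Sard--Smale argument: first show $\widehat\Psi$ is transverse to $A$ inside $S^\kappa\times\mathcal L_T\times\mathcal K_T$, then read off the Fredholm index of the projection to $\mathcal K_T$ from a dimension count.

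For transversality, I would pick $(\boldsymbol q,l,g)\in\widehat\Psi^{-1}(A)$ and a direction $\xi$ in the normal bundle of $A$ at $\widehat\Psi(\boldsymbol q,l,g)$, decompose $\xi$ into its components indexed by $v^{int}_{i_1},\dots,v^{int}_{i_w},v^{mar}_1,\dots,v^{mar}_\xi$, and realise each component by a variation $\delta g$ supported near a carefully chosen sub-segment of the unique path from $v^{int}_0$ to the corresponding vertex. Duhamel's formula makes the linearisation of the composed Morse flow $\Psi_{ste,h}$ (resp.\ $\Psi_{mar,h}$) with respect to $g$ surjective onto $T_{\Psi(\cdot)}S^\kappa$, and localising $\delta g$ to disjoint sub-segments of disjoint paths produces independent variations at all endpoints. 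The implicit function theorem then gives the smooth Banach submanifold structure on $\widehat\Psi^{-1}(A)$.

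For the index, the virtual dimension of the cut-out problem is
\[
\operatorname{ind}=\dim S^\kappa+\dim\mathcal L_T-\operatorname{codim}_{S^{\kappa(w+\xi)}}(A)-c_0,
\]
where $c_0$ is the codimension of the asymptotic constraint at $\boldsymbol q_0$ that is implicit in the definition of $\mathcal M(\boldsymbol q_1,\dots,\boldsymbol q_m;\boldsymbol q_0)$. Because $|q|$ denotes the Morse co-index, each ascending-manifold factor satisfies $\dim W^-_{\boldsymbol q_j}=2\kappa-|\boldsymbol q_j|$ and so contributes codimension $|\boldsymbol q_j|$; each diagonal $\triangle$ has codimension $2$, giving $2\xi$ in total. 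The constraint at $\boldsymbol q_0$ forces the source point $\boldsymbol q$ into the stable manifold $W^+_{\boldsymbol q_0}$ of dimension $|\boldsymbol q_0|$, so $c_0=2\kappa-|\boldsymbol q_0|$. On the trivalent top stratum a quick count gives $|V_{inn}|=m-1+\xi$ and $\dim\mathcal L_T=|E|-(m+1)=m-2+2\xi$. Summing,
\[
\operatorname{ind}=2\kappa+(m-2+2\xi)-\sum_{j=1}^m|\boldsymbol q_j|-2\xi-(2\kappa-|\boldsymbol q_0|)=|\boldsymbol q_0|-\sum_{j=1}^m|\boldsymbol q_j|+m-2.
\]

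The main technical obstacle will be transversality at the marginal vertices, because the marginal edges correspond to closed loops in the ribbon graph $G\to T$ whose endpoints are pinned to big diagonals rather than to externally prescribed critical points. One has to verify that metric variations supported on the interior of a marginal path actually move the endpoint of $\Psi_{mar,h}$ transverse to $\triangle$ rather than being trapped inside $\triangle$; this is where the freedom to perturb the metric independently on each edge of $T$, granted by the convexity of $\mathfrak{M}^R_{\mathbb R^2}$, becomes essential.
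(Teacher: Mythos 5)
Your Sard--Smale argument is the same approach the paper takes, which invokes Fukaya--Oh's Lemma 12.6 for the surjectivity of the linearisation of $\widehat\Psi$ under $T$-dependent metric perturbation; and your dimension count, using $\dim\mathcal L_T=|E_{inn}|+|E_{mar}|=m-2+2\xi$, codimension $|\boldsymbol q_j|$ for each unstable-manifold factor and $2$ for each diagonal, correctly reproduces the stated Fredholm index. You also rightly supply the implicit stable-manifold constraint $c_0=2\kappa-|\boldsymbol q_0|$ on the source factor $S^\kappa$, which the paper's displayed definition of $A$ (indexed $h=1,\dots,w$, hence omitting the $v^{int}_0$-block of indices $j<i_1$) leaves tacit.
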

\begin{proof}
    The proof is similar to \cite[Lemma 12.6]{fukaya1997zero}. 
    By perturbing the $T$-dependent metric, one can show that the linearization of $\widehat{\Psi}$ at $\widehat{\Psi}^{-1}(A)$ is surjective.
\end{proof}

\section{Compactness}
\label{section-appendix-cpt}

A key difference between Morse flow graphs and Morse flow trees is that for a Morse flow graph, there is an additional type of degeneration: a subgraph may collapse to a constant map.
Such degeneration has been discussed in detail by Fukaya \cite{fukaya1996morse,fukaya1996m} and Watanabe \cite{Watanabe2018}.
We briefly recall their settings and adapt them to our case.

Let $M$ be a manifold of dimension $n$.
Let $G_*=(G,T_*,\pi,h_l,h_r,i_l,i_r)$ be a ribbon graph so that all inner vertices have degree 3. Suppose we have chosen the perturbation data $g\colon G\to\mathfrak{M}$ and have assigned a Morse function to each edge $\tilde{e}\in \tilde{E}$, i.e.,
\begin{equation*}
    F\colon \tilde{E}\to C^\infty(M).
\end{equation*}
Consider the following moduli space:
\begin{equation*}
    \mathcal{M}_{T}(M,F)=\left\{\gamma\colon G\to M\,\left|
    \begin{array}{cc}
        \dot{\gamma}|_{\tilde{e}}=-\nabla_g F(\tilde{e}),  \\
        \gamma\text{ maps $\pm\infty$ to critical points of $F(\tilde{e})$}. 
    \end{array}
    \right.\right\}.
\end{equation*}

Let $\Gamma=(V_\Gamma,E_\Gamma)$ be a connected subtree of $T$ such that $V_\Gamma$ does not contain $\{\pm\infty\}$.
Let $G_\Gamma=(\tilde{V}_\Gamma,\tilde{E}_\Gamma)\coloneqq\pi^{-1}(\Gamma)\subset G$ be the corresponding subgraph of $G$.
We can write $G_\Gamma=G^1_\Gamma\sqcup\dots\sqcup G^r_\Gamma$ as the disjoint union of connected components where $G^i\Gamma=(\tilde{V}^i_\Gamma,\tilde{E}^i_\Gamma)$.

Given $p_i\in M$, consider the real blow-up of $|\tilde{V}^i_\Gamma|$ points at $p_i$:
\begin{equation*}
    C^{local}_{\tilde{V}^i_\Gamma}\coloneqq\{\varphi\colon \tilde{V}^i_\Gamma\to T_{p_i}M\}.
\end{equation*}
We define the linearized flow graph of $\Gamma$ at $\boldsymbol{p}=(p_1,\dots,p_r)\in M^r$ as $\mathcal{M}^{local}_\Gamma(M,F,\boldsymbol{p})$, which consists of $l^{local}\colon E_\Gamma\to\mathbb{R}_+\cup\{0\}$ satisfying the following conditions:
\begin{enumerate}
    \item $\sum_{e\in E_\Gamma}|l^{local}(e)|^2=1$.
    \item For each $i=1,\dots,r$, there exists $\varphi_i\in C^{local}_{\tilde{V}^i_\Gamma}$ such that 
    \begin{equation}
        \label{eq-blow}
        \varphi_i(\tilde{v}_{en}(\tilde{e}))-\varphi_i(\tilde{v}_{st}(\tilde{e}))=\nabla_g F(\tilde{e})\cdot l^{local}(\pi(\tilde{e}))
    \end{equation}
    for all $\tilde{e}\in\tilde{E}^i_\Gamma$.
\end{enumerate}
We also define $\mathcal{M}^{local}_\Gamma(M,F)=\sqcup_{\boldsymbol{p}\in M^r}\mathcal{M}^{local}_\Gamma(M,F,\boldsymbol{p})$, which is viewed as a fibration over $M^r$ with $\mathcal{M}^{local}_\Gamma(M,F,\boldsymbol{p})$ being the fiber over $\boldsymbol{p}\in M^r$.

% Note that the quotient $T/\Gamma$ is still a tree possibly with bi-valent vertices, so $\mathcal{M}_{T/\Gamma}(M,\boldsymbol{f})$ can be defined similarly.
Consider the moduli space of flow graphs with the collapsed subgraph $G_\Gamma$ defined as
\begin{equation*}
    \mathcal{M}_{T/\Gamma}(M,F)=\mathcal{M}_{T}(M,F)\cap\{l(e)=0\text{ for }e\in\Gamma\}.
\end{equation*}
We also denote 
\begin{equation*}
    \mathcal{M}_{T/\Gamma}(M,F,\boldsymbol{p})\coloneqq\mathcal{M}_{T/\Gamma}(M,F)\cap\{\gamma(G^i_{\Gamma})=\{p_i\},\,i=1,\dots,r\}.
\end{equation*}
Then $\mathcal{M}_{T/\Gamma}(M,F)=\sqcup_{\boldsymbol{p}\in M^r}\mathcal{M}_{T/\Gamma}(M,F,\boldsymbol{p})$, which is also viewed as a fibration over $M^r$.
Taking into account the linearized constraint, we define 
\begin{equation*}
    \mathcal{M}_{T,\Gamma}(M,F)=\mathcal{M}_{T/\Gamma}(M,F)\times_{M^r}\mathcal{M}^{local}_\Gamma(M,F).
\end{equation*}

Similar to Lemma \ref{lemma-trans}, we can show:
\begin{lemma}
    Fixing a generic choice of of $F$ and perturbation data $g\colon G\to\mathfrak{M}$, $\mathcal{M}_{T/\Gamma}(M,F)$ is a smooth manifold.
\end{lemma}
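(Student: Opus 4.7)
The plan is to adapt the Sard--Smale argument used for Lemma \ref{lemma-trans} to the collapsed setting. I first introduce a constrained parametrized map
\[
\widehat{\Psi}_{T/\Gamma}\colon M^{\kappa}\times \mathcal{L}_{T/\Gamma}\times \mathcal{K}_T\times \mathcal{F}_T\to M^{\kappa(w+\xi)},
\]
analogous to the $\widehat{\Psi}$ of Appendix \ref{section-appendix-trans}, where
\[
\mathcal{L}_{T/\Gamma}=\{\,l\in\mathcal{L}_T\mid l(e)=0\text{ for every }e\in\Gamma\,\}
\]
is an affine subspace of $\mathcal{L}_T$, $\mathcal{K}_T$ is the space of $T$-dependent metrics, and $\mathcal{F}_T$ denotes the space of edge-dependent Morse functions $F\colon \tilde{E}\to C^\infty(M)$. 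Composing the time-$l(e)$ gradient flows exactly as in Appendix \ref{section-appendix-trans}, I would identify $\mathcal{M}_{T/\Gamma}(M,F)$ for fixed $(F,g)$ with the fiber over $(g,F)$ of the projection of $\widehat{\Psi}_{T/\Gamma}^{-1}(A)$ onto $\mathcal{K}_T\times\mathcal{F}_T$, where $A$ is the stratified incidence subset described in Appendix \ref{section-appendix-trans}.

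The analytic input is to verify that $\widehat{\Psi}_{T/\Gamma}$ is transverse to $A$. For each edge $e\in E\setminus\Gamma$ with positive length, the argument of \cite[Lemma 12.6]{fukaya1997zero} applies: perturbing the metric $g$ or the function $F(\tilde{e})$ in a small open set around $\gamma(\tilde{e})$ rotates $\nabla_g F(\tilde{e})$ in an arbitrary direction, and composing with the downstream $\operatorname{Exp}$ factors moves the image in $M^{\kappa(w+\xi)}$ in any prescribed direction. Edges of $\Gamma$ carry no flow because they are collapsed by $l\equiv 0$, so they impose no additional constraint beyond the coincidence of endpoints at the cluster points, which is already built into $\mathcal{L}_{T/\Gamma}$. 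Hence $\widehat{\Psi}_{T/\Gamma}^{-1}(A)$ is a smooth Banach submanifold, and applying the Sard--Smale theorem to its projection onto $\mathcal{K}_T\times\mathcal{F}_T$ shows that for a generic choice of $(g,F)$ the regular fiber, which is exactly $\mathcal{M}_{T/\Gamma}(M,F)$, is a smooth finite-dimensional manifold.

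The main obstacle I expect is the case in which a connected component $G^i_\Gamma$ is adjacent to only a small number of non-collapsed edges, so that at the contracted point $p_i\in M$ only a few gradient directions are available for perturbation. One must check that these perturbations, together with the linearized flow constraint (\ref{eq-blow}) governing $\mathcal{M}^{local}_\Gamma$, jointly span enough of $T_{p_i}M^{\kappa}$ to achieve the required surjectivity. This reduces to a generic independence statement for gradients of differences of Morse functions at a common point, but requires careful bookkeeping of how the ribbon structure and the labels $h_l,h_r$ distribute flows of different $f_{i_r,h_r}-f_{i_l,h_l}$ among the $\kappa$ sheets. Once this local independence is established, Fubini--type arguments as in \cite{fukaya1997zero} promote the pointwise surjectivity to transversality of $\widehat{\Psi}_{T/\Gamma}$ along all of $\widehat{\Psi}_{T/\Gamma}^{-1}(A)$, completing the proof.
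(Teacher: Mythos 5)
Your overall strategy — restrict the length functional to $\mathcal{L}_{T/\Gamma}$, set up a parametrized evaluation map over $\mathcal{K}_T\times\mathcal{F}_T$ analogous to the $\widehat{\Psi}$ of Appendix~\ref{section-appendix-trans}, and apply Sard--Smale — is exactly what the paper intends by ``similar to Lemma~\ref{lemma-trans}''; in fact, with $F$ varying freely over all edge-dependent functions rather than being constrained by Kirchhoff laws, the perturbation space is even larger, so transversality is easier to arrange here than in Section~\ref{section-morse}.

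However, your closing ``main obstacle'' paragraph conflates $\mathcal{M}_{T/\Gamma}(M,F)$ with $\mathcal{M}_{T,\Gamma}(M,F)$, and this is worth untangling. The linearized blow-up constraint~(\ref{eq-blow}) appears only in the definition of the local model $\mathcal{M}^{local}_\Gamma(M,F)$, and hence only in the fiber product $\mathcal{M}_{T,\Gamma}(M,F)=\mathcal{M}_{T/\Gamma}(M,F)\times_{M^r}\mathcal{M}^{local}_\Gamma(M,F)$; it imposes no condition on $\mathcal{M}_{T/\Gamma}(M,F)$ itself. Once $l(e)=0$ for $e\in\Gamma$, the time-$0$ flows on the collapsed edges are identity maps, so they contribute no factor to $\Psi$; the only residual condition is that $\gamma$ is constant on each cluster $G^i_\Gamma$, which is precisely what identifying the flows of the adjacent non-collapsed edges at the cluster point encodes. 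The transversality argument is therefore literally that of $\mathcal{M}_{T'}(M,F)$ on the contracted graph $T'$, with no new independence statement to verify at the contracted vertex. Your appeal to the ribbon labels $h_l,h_r$ and the differences $f_{i_r,h_r}-f_{i_l,h_l}$ similarly imports the Kirchhoff-constrained setting of Section~\ref{section-morse} into a lemma stated for arbitrary $F\colon\tilde{E}\to C^\infty(M)$; in the present setting the flows on distinct adjacent edges can be perturbed independently. The genuinely delicate transversality question involving~(\ref{eq-blow}) is the subject of the \emph{next} lemma on $\mathcal{M}_{T,\Gamma}(M,F)$ (following Watanabe), and the paper is explicit that in the Kirchhoff-constrained version it need not hold, which is why the two-dimensional hypothesis enters.
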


The following lemma is a direct generalization of \cite[Lemma 2.8]{Watanabe2018}:
\begin{lemma}
    Fixing a generic choice of of $F$ and perturbation data $g\colon G\to\mathfrak{M}$, $\mathcal{M}_{T,\Gamma}(M,F)$ is a smooth manifold.
\end{lemma}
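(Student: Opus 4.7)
The plan is to realize $\mathcal{M}_{T,\Gamma}(M,F)$ as the transverse fiber product of two smooth fibrations over $M^r$, and to obtain the requisite transversality by varying $F$ and $g$ in a universal moduli space, concluding via the Sard--Smale theorem, in exact parallel with Watanabe's argument. I would proceed in three steps.

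First, I would verify that $\mathcal{M}^{local}_\Gamma(M,F) \to M^r$ is smooth for generic $F$. For a fixed base point $\boldsymbol{p} = (p_1,\dots,p_r)$, once one vertex of each $G^i_\Gamma$ is placed, the map $\varphi_i$ is determined by (\ref{eq-blow}) as an affine function of $l^{local}$ with coefficients $\nabla_g F(\tilde{e})|_{p_i}$; the only nontrivial constraints are the loop conditions $\sum_{\tilde{e}\in \mathrm{cycle}}\pm\nabla_g F(\tilde{e})|_{p_i}\cdot l^{local}(\pi(\tilde{e})) = 0$ imposed on each independent cycle of $G^i_\Gamma$, together with the unit-norm condition $\sum_e |l^{local}(e)|^2 = 1$. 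For a generic choice of $F$ the differentials of these cycle constraints span the cycle-obstruction space, so the fiber is cut out transversely; varying $\boldsymbol{p}$ then makes the total space a smooth fibration over $M^r$.

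Second, I would consider the two evaluation maps
\begin{equation*}
    \operatorname{ev}_{T/\Gamma}\colon \mathcal{M}_{T/\Gamma}(M,F)\to M^r, \qquad \operatorname{ev}_{loc}\colon \mathcal{M}^{local}_\Gamma(M,F)\to M^r,
\end{equation*}
each sending a configuration to the tuple $\boldsymbol{p}$ of collapse points of the components $G^i_\Gamma$. The sought moduli space is the fiber product along $(\operatorname{ev}_{T/\Gamma},\operatorname{ev}_{loc})$ over the diagonal, so it suffices to show that $\operatorname{ev}_{T/\Gamma}$ and $\operatorname{ev}_{loc}$ are mutually transverse for generic data. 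To achieve this, I would form the universal moduli space
\begin{equation*}
    \widehat{\mathcal{M}}_{T,\Gamma}(M) = \widehat{\mathcal{M}}_{T/\Gamma}(M)\times_{M^r}\widehat{\mathcal{M}}^{local}_\Gamma(M)
\end{equation*}
over the Banach space of admissible pairs $(F,g)$ (with $F$ varying in a neighborhood of the given tuple of Morse functions on $M$ and $g$ varying in $\mathcal{K}_T$), and show that the linearized universal constraint is surjective. Surjectivity on the $\mathcal{M}_{T/\Gamma}$ factor is the previous lemma, obtained by perturbing $g$ on the complement of $\Gamma$ exactly as in the proof of Lemma \ref{lemma-fredholm}. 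Surjectivity on the $\mathcal{M}^{local}_\Gamma$ factor follows because an arbitrary perturbation of the one-jet of $F$ at $p_i$ (allowed since $F$ varies freely on $M$) produces arbitrary perturbations of the vectors $\nabla_g F(\tilde{e})|_{p_i}$ appearing in (\ref{eq-blow}), and these already span every admissible deformation of the cycle conditions.

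Once surjectivity of the universal linearization is in hand, the implicit function theorem realizes $\widehat{\mathcal{M}}_{T,\Gamma}(M)$ as a smooth Banach manifold; the projection to the parameter space $\{(F,g)\}$ is Fredholm, so the Sard--Smale theorem furnishes a residual set of $(F,g)$ for which the fiber $\mathcal{M}_{T,\Gamma}(M,F)$ is smooth of the expected dimension. The main obstacle I anticipate is the second half of the transversality step: when $G_\Gamma^i$ has first Betti number greater than one, the cycle conditions on $\mathcal{M}^{local}_\Gamma$ couple the gradients along many edges simultaneously, and one must check carefully that the perturbations of $F$ near $p_i$ (which change all gradients along edges incident to $p_i$ at once) produce enough independent variations to cover all cycle-obstruction vectors. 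This amounts to verifying that a certain combinatorial map built from the incidence matrix of $G_\Gamma^i$ is surjective, which it is because the gradients $\nabla F(\tilde{e})|_{p_i}$ are free parameters in $T_{p_i}M$ indexed by edges, and the cycle-obstruction space receives contributions from these edge parameters through the cycle boundary map.
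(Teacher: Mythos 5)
The paper offers no proof for this lemma at all; it states only that the result is ``a direct generalization of \cite[Lemma 2.8]{Watanabe2018}'' and leaves the verification to the reader. Your proposal is therefore a reconstruction rather than a comparison, and it is essentially the right one: you correctly identify $\mathcal{M}_{T,\Gamma}(M,F)$ as a fiber product $\mathcal{M}_{T/\Gamma}(M,F)\times_{M^r}\mathcal{M}^{local}_\Gamma(M,F)$, you correctly isolate the cycle conditions and unit-norm condition as the constraints cutting out the fibers of $\mathcal{M}^{local}_\Gamma$, and you correctly observe that since $F$ is an \emph{independent} assignment $\tilde{E}\to C^\infty(M)$ (the crucial freedom that is lost in the Kirchhoff-constrained case treated afterward), the gradients $\nabla_g F(\tilde{e})|_{p_i}$ are free parameters which can be varied edge-by-edge to make the linear cycle constraints transverse. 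The universal-moduli-space $+$ Sard--Smale framing is consistent with how the paper proves every other transversality statement (e.g., Lemma~\ref{lemma-fredholm}), so you are in the right methodological register.

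Two small cautions that do not affect correctness but are worth noting. First, you phrase the goal as showing that $\operatorname{ev}_{T/\Gamma}$ and $\operatorname{ev}_{loc}$ are transverse for generic data; it is cleaner (and is what the universal argument actually produces) to phrase it as transversality of the fiber-product equation along $\widehat{\mathcal{M}}_{T,\Gamma}(M)$, since $\mathcal{M}^{local}_\Gamma(M,F)\to M^r$ need not be a submersion over all of $M^r$ for fixed generic $F$ --- only at the collapse points actually realized by elements of $\mathcal{M}_{T,\Gamma}$. Second, in step one it is worth remarking that $l^{local}$ is a function on $E_\Gamma$ (edges of the subtree of $T$), not on $\tilde{E}_\Gamma$, so distinct lifts $\tilde{e}\in\pi^{-1}(e)$ share the same length variable while carrying independent gradient vectors; this coupling does not obstruct genericity but it does mean the coefficient of $l^{local}(e)$ in a given cycle condition may be a sum of several gradients. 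The concern you flag about $b_1(G^i_\Gamma)>1$ is resolved exactly as you say: the coefficient vectors in $T_{p_i}M$ are free parameters indexed by $\tilde{E}^i_\Gamma$, and the cycle boundary map is surjective, so independent perturbation of these coefficients covers the cycle-obstruction space.
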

In the case of Section \ref{section-morse}, $F$ is not freely chosen but needs to satisfy the Kirchhoff laws at vertices. 
Recall the definition of $\mathcal{M}_T(\boldsymbol{q}_1,\dots,\boldsymbol{q}_m;\boldsymbol{q}_0)$.
We can similarly define
$\mathcal{M}_{T/\Gamma}(\boldsymbol{q}_1,\dots,\boldsymbol{q}_m;\boldsymbol{q}_0)$, $\mathcal{M}^{local}_{\Gamma}(\boldsymbol{q}_1,\dots,\boldsymbol{q}_m;\boldsymbol{q}_0)$, and $\mathcal{M}_{T,\Gamma}(\boldsymbol{q}_1,\dots,\boldsymbol{q}_m;\boldsymbol{q}_0)$ to specify the ribbon tree and the collapsed graph. 
We abbreviate them by $\mathcal{M}_{T}(\vec{\boldsymbol{q}})$, $\mathcal{M}_{T/\Gamma}(\vec{\boldsymbol{q}})$, $\mathcal{M}^{local}_{\Gamma}(\vec{\boldsymbol{q}})$, and $\mathcal{M}_{T,\Gamma}(\vec{\boldsymbol{q}})$, respectively.
The transversality arguments in \cite[Lemma 2.8]{Watanabe2018} do not always apply. 
However, when $M$ has dimension $n=2$, the compactness property we need is still satisfied even if $\mathcal{M}_{T,\Gamma}(\vec{\boldsymbol{q}})$ is not transversely cut out.
\begin{lemma}
    Suppose $M$ is of dimension $2$. Fixing a generic choice of perturbation data $g\colon G\to\mathfrak{M}$, $\mathcal{M}_{T/\Gamma}(\vec{\boldsymbol{q}})$ is a smooth manifold.
\end{lemma}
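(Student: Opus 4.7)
My plan is to reduce the statement to the Sard--Smale argument already carried out in Appendix \ref{section-appendix-trans} for the uncollapsed moduli space $\mathcal{M}_T(\vec{\boldsymbol{q}})$. Observe first that collapsing each connected component of $\Gamma$ to a point yields a modified ribbon tree $T/\Gamma$. Since $V_\Gamma$ avoids the external vertices $\{\pm\infty\}$, every external edge of $T$ survives in $T/\Gamma$, and the Kirchhoff condition at each new (higher-degree) inner vertex of $T/\Gamma$ is simply the sum of the Kirchhoff conditions at the inner vertices of the corresponding component of $\Gamma$, hence is automatically satisfied on the moduli space. The domain-dependent metric $g$ restricted to $E(T)\setminus E_\Gamma$ then furnishes a well-defined perturbation datum on $T/\Gamma$.

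Next, I would define, in precise analogy to Appendix \ref{section-appendix-trans}, a map
\begin{equation*}
    \widehat{\Psi}_{T/\Gamma}\colon S^\kappa\times\mathcal{L}_{T/\Gamma}\times\mathcal{K}_{T/\Gamma}\to S^{\kappa(w+\xi)},
\end{equation*}
built from exactly the same composition of time-$l(e)$ gradient flows as $\widehat{\Psi}$, but indexed only over the surviving edges $e\in E(T)\setminus E_\Gamma$. Under the identifications of the previous paragraph one checks
\begin{equation*}
    \mathcal{M}_{T/\Gamma}(\vec{\boldsymbol{q}})\simeq \widehat{\Psi}_{T/\Gamma}^{-1}(A)\cap (S^\kappa\times\mathcal{L}_{T/\Gamma}\times\{g\}),
\end{equation*}
with the same target $A$ (the product of unstable manifolds and big diagonals) as in Appendix \ref{section-appendix-trans}. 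Smoothness of $\mathcal{M}_{T/\Gamma}(\vec{\boldsymbol{q}})$ for generic $g$ then follows from the Sard--Smale theorem once we establish that $d\widehat{\Psi}_{T/\Gamma}$ is surjective along $\widehat{\Psi}_{T/\Gamma}^{-1}(A)$.

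Surjectivity is the main technical step, and I would argue it exactly as in \cite[Lemma 12.6]{fukaya1997zero}: along each marginal or stem path in $T/\Gamma$ used to define a component of $\widehat{\Psi}_{T/\Gamma}$, one can freely vary the metric on an arbitrarily short arc, which produces an independent variation in the corresponding tangent factor $T_\ast S^\kappa$. Because every such path uses at least one edge of $T\setminus\Gamma$ (the external edges, which are disjoint from $\Gamma$, must lie in every path to $\pm\infty$), these variations span the cokernel directions needed, and the standard argument goes through.

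The delicate point, and the reason for the restriction to $\operatorname{dim} M=2$, is that each factor of the big diagonal in $A$ has codimension $n=\operatorname{dim} S$, while the perturbations supplied by varying $g$ along an edge contribute directions that pair naturally with the $2$-dimensional normal bundle to each diagonal stratum in $S^\kappa$. In $\operatorname{dim} S=2$ the counts match and surjectivity at $A$ is automatic; this is where the proof would break if one tried to extend to higher-dimensional $M$, and it is also the reason full transversality for $\mathcal{M}_{T,\Gamma}(\vec{\boldsymbol{q}})$ (which carries the extra linearized constraints from $\mathcal{M}^{local}_\Gamma$) is not asserted, only the weaker smoothness of $\mathcal{M}_{T/\Gamma}(\vec{\boldsymbol{q}})$ needed for the compactness argument.
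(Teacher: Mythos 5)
Your approach is essentially the paper's: the proof in the paper is a one-line reference to Lemma \ref{lemma-trans}, and your argument is a reasonable fleshing-out of that reference, viewing $\mathcal{M}_{T/\Gamma}(\vec{\boldsymbol{q}})$ as the restriction of the Sard--Smale setup of Lemma \ref{lemma-fredholm} to the face $\{l|_{E_\Gamma}=0\}$ of $\mathcal{L}_T$ and then perturbing $g$ on the surviving edges to achieve surjectivity of the linearization. One small caveat: the datum $g$ in the lemma is still defined on all of $G$; restricting it to $E\setminus E_\Gamma$ is a convenient reformulation, but since the flow along collapsed edges is time-zero, the distinction is cosmetic.

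However, your final paragraph misattributes the role of the hypothesis $\operatorname{dim} M=2$. It is not needed to make the Sard--Smale surjectivity work, and the mechanism you describe does not occur: varying $g\in\mathfrak{M}$ along arcs yields an infinite-dimensional space of perturbations, not a ``$2$-dimensional'' one, and the normal bundle to the big diagonal $\triangle(i,j)\subset S^\kappa$ has fiber dimension $n=\operatorname{dim} S$ rather than $2$; there is no dimension-matching obstruction to the standard Fukaya--Oh argument here. Indeed, the paper's immediately preceding lemma asserting that $\mathcal{M}_{T/\Gamma}(M,F)$ is smooth carries no dimension restriction. The hypothesis $n=2$ is stated because (i) the dimension formulas invoked via Lemma \ref{lemma-fredholm} are written for surfaces, and (ii) the real role of $n=2$ appears later: the paper explicitly says the transversality argument for the linearized moduli space $\mathcal{M}_{T,\Gamma}(\vec{\boldsymbol{q}})$ (with the blow-up constraint (\ref{eq-blow})) \emph{does not} always apply, and $n=2$ is what lets the subsequent dimension-count and compactness arguments (Lemmas \ref{lemma-diagonal-vanish}, \ref{lemma-diagonal-vanish-2}, \ref{lemma-no-diagonal}) succeed without full transversality of $\mathcal{M}_{T,\Gamma}$. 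You should therefore delete or correct the claim that surjectivity ``would break'' for higher $n$; the transversality of $\mathcal{M}_{T/\Gamma}$ itself is not what the dimension restriction protects.
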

\begin{proof}
    This is similar to Lemma \ref{lemma-trans}.
\end{proof}

\begin{figure}[ht]
    \centering
    \includegraphics[width=10cm]{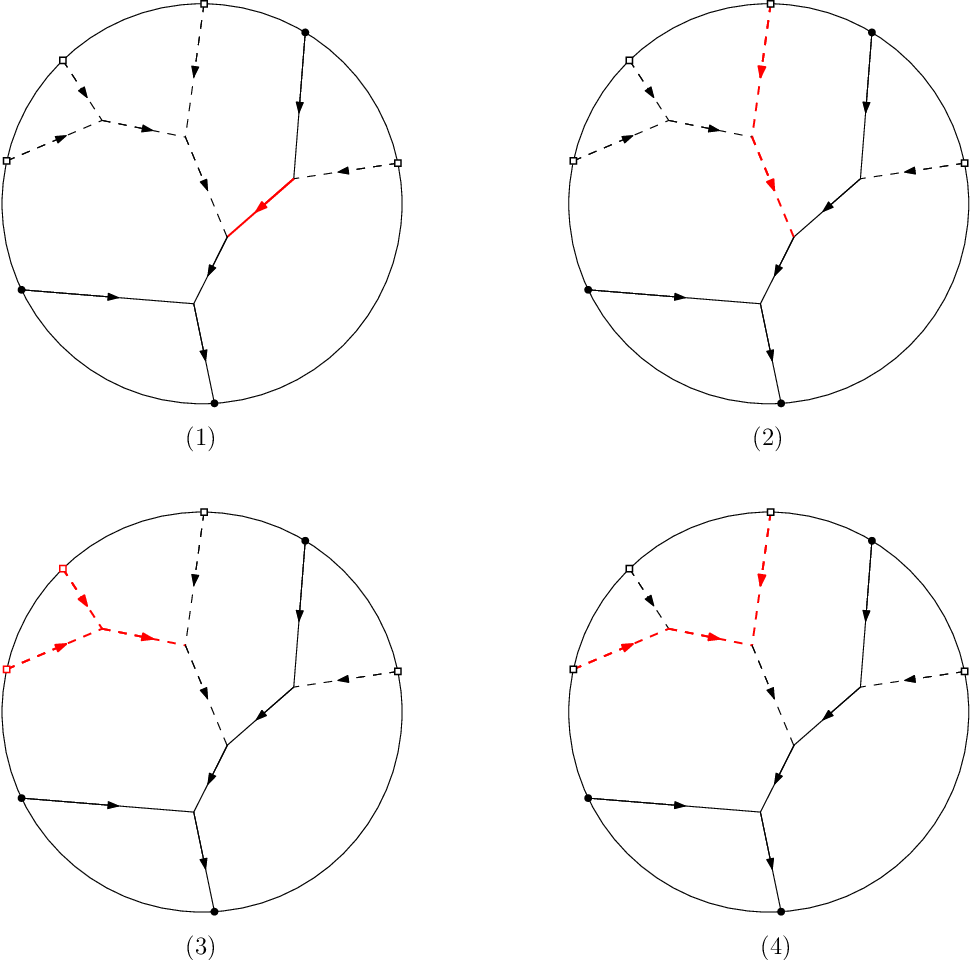}
    \caption{The red subgraphs denote $\Gamma$.}
    \label{fig-diagonal-cases}
\end{figure}

We now restrict to the case $n=2$.
The purpose to introduce $\mathcal{M}_{T,\Gamma}(\vec{\boldsymbol{q}})$ is that it works as part of the compactification of $\mathcal{M}_{T}(\vec{\boldsymbol{q}})$:
\begin{equation*}
    \mathcal{M}_{T,\Gamma}(\vec{\boldsymbol{q}})\subset \partial\overline{\mathcal{M}}_{T}(\vec{\boldsymbol{q}}).
\end{equation*}
Suppose $\operatorname{dim}\mathcal{M}_{T}(\vec{\boldsymbol{q}})=k\leq 1$.
We can make the following observations:
\begin{enumerate}
    \item If $E_\Gamma\cap (E_{mar}\cap E_{ext})=\emptyset$, then $\operatorname{dim}\mathcal{M}_{T/\Gamma}(\vec{\boldsymbol{q}})=k-|E_\Gamma|$ and $\mathcal{M}_{T,\Gamma}(\vec{\boldsymbol{q}})\simeq\mathcal{M}_{T/\Gamma}(\vec{\boldsymbol{q}})$.
    
    % \item If $E_\Gamma\cap E_{mar}\neq\emptyset$ and there exists $v\in V_\Gamma$ with $|v|_T=3$, so that $e_{v,i}\notin E_{ext}\cap E_{mar}$ for all $i$, then $\operatorname{dim}\mathcal{M}_{T/\Gamma}(\vec{\boldsymbol{q}})\leq k-2<0$ and hence $\mathcal{M}_{T,\Gamma}(\vec{\boldsymbol{q}})=\emptyset$.

    \item If $E_\Gamma\cap (E_{mar}\cap E_{ext})\neq\emptyset$ and the image of $\Gamma$ in $T/\Gamma$ as vertices has a total degree greater than 2, then $\operatorname{dim}\mathcal{M}_{T/\Gamma}(\vec{\boldsymbol{q}})\leq k-2<0$ and hence $\mathcal{M}_{T,\Gamma}(\vec{\boldsymbol{q}})=\emptyset$.

    \item If there exists $e_1,e_2\in E_\Gamma\cap E_{ext}\cap E_{mar}$ such that $e_1\neq e_2$ and $v_{en}(e_1)=v_{en}(e_2)$, then $\operatorname{dim}\mathcal{M}_{T/\Gamma}(\vec{\boldsymbol{q}})\leq k-2<0$ and hence $\mathcal{M}_{T,\Gamma}(\vec{\boldsymbol{q}})=\emptyset$.
    
    % \item If there exists $v\in V_\Gamma$ with $|v|_\Gamma=3$, so that $e_{v,i}\notin E_{ext}\cap E_{mar}$ for all $i$, then $\operatorname{dim}\mathcal{M}_{T/\Gamma}(\vec{\boldsymbol{q}})\leq k-2<0$ and hence $\mathcal{M}_{T,\Gamma}(\vec{\boldsymbol{q}})=\emptyset$.

    \item First exclude the above cases. For some $v\neq v_0\in V_{ext}$, denote the path from $v$ to $v_0$ by $P$. Let $E^{neigh}_P=E_{ext}\cap E_{mar}\cap\{v_{en}(e)\in V_P\}$ be the set of edges of $P$. If $\Gamma$ is a connected subtree of $T$ such that $E_\Gamma\subset E_P\cup E^{neigh}_P$, then $\operatorname{dim}\mathcal{M}_{T/\Gamma}(\vec{\boldsymbol{q}})\leq k-1\leq 0$.\label{case-nontrivial}
    For later use, we denote $\Gamma_{core}=\Gamma\cap P$.
\end{enumerate}
See Figure \ref{fig-diagonal-cases} for examples of each case.
Based on the above observations, we can conclude:
\begin{lemma}
    \label{lemma-diagonal-vanish}
    $\mathcal{M}_{T,\Gamma}(\vec{\boldsymbol{q}})$ is nonempty only if $|E_\Gamma|=1$ or $\Gamma$ satisfies case (\ref{case-nontrivial}).
\end{lemma}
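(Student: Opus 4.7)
The plan is to simply consolidate the four case observations listed immediately before the lemma. Since $\mathcal{M}_{T,\Gamma}(\vec{\boldsymbol{q}})$ is a fiber product over $M^r$ of $\mathcal{M}_{T/\Gamma}(\vec{\boldsymbol{q}})$ with the linearized piece $\mathcal{M}^{local}_{\Gamma}(\vec{\boldsymbol{q}})$, and since $\mathcal{M}^{local}_{\Gamma}(\vec{\boldsymbol{q}})$ has nonnegative dimension, a necessary condition for $\mathcal{M}_{T,\Gamma}(\vec{\boldsymbol{q}})$ to be nonempty (given that it is transversely cut out for a generic choice of perturbation data) is that $\operatorname{dim}\mathcal{M}_{T/\Gamma}(\vec{\boldsymbol{q}})\geq 0$. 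I would use this dimension count systematically to rule out all possibilities except the two claimed.

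First I would handle the ``no marginal external edges collapse'' case. If $E_\Gamma\cap (E_{mar}\cap E_{ext})=\emptyset$, then by observation (1) each edge of $\Gamma$ imposes one condition by shrinking its length parameter from $\mathcal{L}_T$, so $\operatorname{dim}\mathcal{M}_{T/\Gamma}(\vec{\boldsymbol{q}})=k-|E_\Gamma|$, where $k=\operatorname{dim}\mathcal{M}_T(\vec{\boldsymbol{q}})\leq 1$. Since we need this to be $\geq 0$, the only possibility is $|E_\Gamma|\leq 1$, and as $\Gamma$ must have at least one edge to be a nontrivial collapse, $|E_\Gamma|=1$. This gives the first alternative.

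Next I would handle the case where $E_\Gamma$ meets $E_{mar}\cap E_{ext}$. The point is that collapsing a marginal external edge costs more dimensions than an internal edge, because one has to both send the length to zero \emph{and} force the two endpoints in $S^\kappa$ corresponding to the transposition of $\sigma(e)$ to lie on the appropriate diagonal. Observation (2) says that if the image of $\Gamma$ in $T/\Gamma$ has total degree $>2$, then enough independent such constraints accumulate that $\operatorname{dim}\mathcal{M}_{T/\Gamma}(\vec{\boldsymbol{q}})\leq k-2<0$; similarly observation (3) says that if two distinct marginal external edges $e_1\neq e_2$ in $\Gamma$ share their endpoint $v_{en}(e_1)=v_{en}(e_2)$, then again $\operatorname{dim}\mathcal{M}_{T/\Gamma}(\vec{\boldsymbol{q}})\leq k-2<0$. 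Both subcases therefore contribute nothing.

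The remaining situation, by elimination, is that $\Gamma$ sits along a single path $P$ from some $v\in V_{ext}\setminus\{v_0\}$ down to $v_0$, with its marginal external edges attached as pendants to $P$. This is exactly case (\ref{case-nontrivial}), where $\operatorname{dim}\mathcal{M}_{T/\Gamma}(\vec{\boldsymbol{q}})\leq k-1\leq 0$ and so nonemptiness is still possible. The only forward-looking subtlety is checking that the dimension accounting in observation (\ref{case-nontrivial}) really is sharp: the $|E_\Gamma\cap E_{inn}|$ collapsing constraints plus the extra diagonal constraints from the pendant marginal external edges are correctly offset by the freedom along the path $P$ and the linearized freedom in $\mathcal{M}^{local}_\Gamma(\vec{\boldsymbol{q}})$. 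This is routine in the two-dimensional setting and is what the preceding discussion has already set up; combining the four observations then immediately yields the claim.
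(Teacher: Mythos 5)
Your proof is correct and takes essentially the same approach as the paper, which gives no explicit argument beyond the four preceding observations and simply states the lemma as a consequence; you correctly consolidate them by dimension counting and elimination. One minor remark: the final paragraph's worry about whether the bound in case (\ref{case-nontrivial}) is ``sharp'' is not relevant to this lemma (only nonnegativity of $\operatorname{dim}\mathcal{M}_{T/\Gamma}(\vec{\boldsymbol{q}})$ is needed here; sharpness would matter for Lemma \ref{lemma-diagonal-vanish-2}), and the genuinely unstated step is the exhaustiveness of cases (1)--(\ref{case-nontrivial}), which is a combinatorial fact about connected subtrees $\Gamma$ of degree $\leq 2$ in $T/\Gamma$ that neither you nor the paper spells out, but your ``by elimination'' reasoning is in line with the paper's level of detail.
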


The cases discussed above show how $\operatorname{dim}\mathcal{M}_{T/\Gamma}(\vec{\boldsymbol{q}})$ depends on the different topologies of $\Gamma$.
The existence of $\mathcal{M}_{T,\Gamma}(\vec{\boldsymbol{q}})$ also depends on the decorations on $\Gamma$.
Therefore, we further investigate case (\ref{case-nontrivial}). 
See Figure \ref{fig-diagonal-pair} for a typical local picture of $\Gamma$ and $G_\Gamma$ in this case.

\begin{figure}[ht]
    \centering
    \includegraphics[width=12cm]{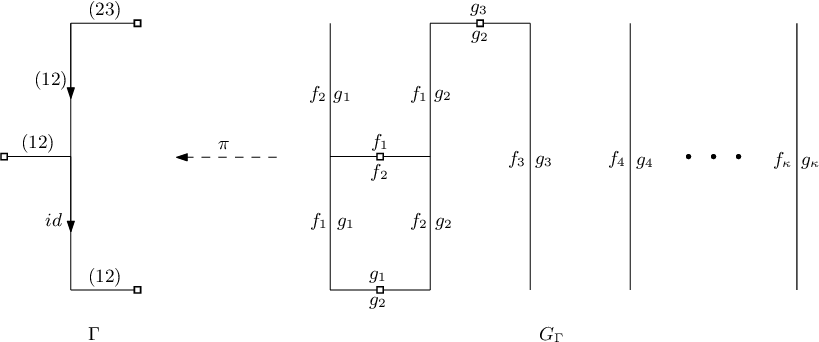}
    \caption{An example of $\Gamma$ and $G_\Gamma$. $\boldsymbol{g}$ and $\boldsymbol{f}$ correspond to the tuples of Morse functions on the left and right (with respect to the arrow) of $\Gamma_{core}$.}
    \label{fig-diagonal-pair}
\end{figure}

\begin{figure}[ht]
    \centering
    \includegraphics[width=14cm]{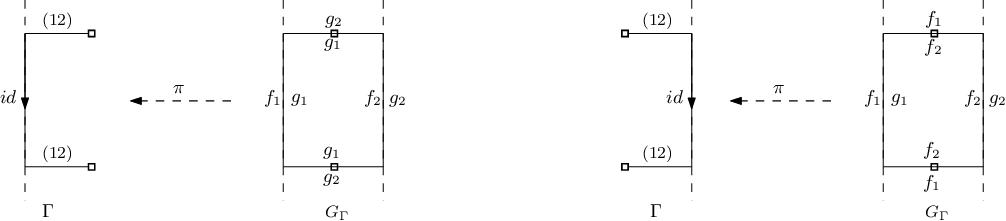}
    \caption{}
    \label{fig-diagonal-same}
\end{figure}
\begin{figure}[ht]
    \centering
    \includegraphics[width=14cm]{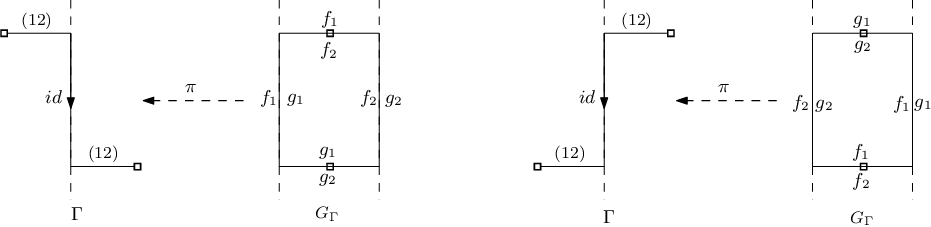}
    \caption{}
    \label{fig-diagonal-diff}
\end{figure}

\begin{lemma}
    \label{lemma-diagonal-vanish-2}
    In case (\ref{case-nontrivial}), $\mathcal{M}_{T,\Gamma}(\vec{\boldsymbol{q}})$ is empty.
\end{lemma}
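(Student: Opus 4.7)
My plan is to combine the closed-loop form of the linearized equation (\ref{eq-blow}) in the cover $G_\Gamma$ with the non-self-intersection of $d\boldsymbol{f}_i$, in order to force every $l^{local}$ to vanish and contradict $\sum |l^{local}(e)|^2 = 1$.

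First I would unpack the combinatorial structure in case (\ref{case-nontrivial}). By assumption $\Gamma_{core}$ is a sub-path of $P$ and the remaining edges of $\Gamma$ are external marginal edges attached to vertices of $\Gamma_{core}$. Writing $\boldsymbol{f}, \boldsymbol{g}$ for the tuples of Morse functions on the two sides of $P$, stem edges in $E_\Gamma\cap E_P$ carry gradients of the form $\nabla(f_a-g_b)$, while external marginal edges carry gradients of the form $\nabla(f_p-f_q)$ or $\nabla(g_p-g_q)$. By condition (5) of Definition 2.2, the two non-trivial lifts of each such marginal edge are folded together at their shared external vertex; combined with traversal along $\Gamma_{core}$, this produces at least one closed loop $\ell$ in some component $G^i_\Gamma$. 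Figures \ref{fig-diagonal-same} and \ref{fig-diagonal-diff} record two sub-cases according to whether the attached marginal transpositions share an index or are disjoint.

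Next, traversing $\ell$ and telescoping (\ref{eq-blow}) gives
\begin{equation*}
    \sum_{\tilde{e}\in\ell}\epsilon_{\tilde{e}}\, l^{local}(\pi(\tilde{e}))\,\nabla_g F(\tilde{e})=0\quad\text{in }T_{p_i}M,
\end{equation*}
with $\epsilon_{\tilde{e}}\in\{\pm 1\}$ from the direction of traversal. I would project this onto the $\boldsymbol{f}$- and $\boldsymbol{g}$-parts separately (they involve independent Morse functions). After collecting terms, the $\boldsymbol{g}$-projection takes the form $\bigl(\sum_{e\in\Gamma_{core}}c_e\, l^{local}(e)\bigr)\cdot\nabla(g_i-g_j)$ for some pair of indices $i\neq j$, and the non-self-intersection hypothesis gives $\nabla(g_i-g_j)\neq 0$ on the relevant region, reducing matters to $\sum_{e\in\Gamma_{core}}c_e\, l^{local}(e)=0$ with all $c_e$ forced to have the same sign.

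The main obstacle is this sign-consistency check: showing $c_e=+1$ uniformly, so that a sum of non-negative quantities $l^{local}(e)$ must itself vanish rather than merely cancel in a signed sum. This requires a careful analysis of how the ribbon structure on $P$ and the fold at each marginal vertex propagate orientations around $\ell$, handled separately in the two configurations of Figures \ref{fig-diagonal-same} and \ref{fig-diagonal-diff}. Once $l^{local}(e)=0$ on $\Gamma_{core}$, subtracting the two folded lifts' copies of (\ref{eq-blow}) on each remaining external marginal edge $\hat{e}$ yields $l^{local}(\hat{e})\nabla(f_p-f_q)=0$, and again nonvanishing of the gradient gives $l^{local}(\hat{e})=0$. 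This contradicts $\sum|l^{local}(e)|^2=1$, so $\mathcal{M}_{T,\Gamma}(\vec{\boldsymbol{q}})=\emptyset$.
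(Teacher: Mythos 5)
Your proposal is in the right spirit — you correctly identify that the heart of the argument lives in the linearized loop equation coming from (\ref{eq-blow}) and that one wants to contradict the normalization $\sum |l^{local}(e)|^2=1$. The paper's proof does essentially that, after first reducing (by a dimension count) to the case where each $G^i_\Gamma\cap\pi^{-1}(\Gamma_{core})$ has exactly two components and then to $\kappa=2$, and then writing the loop constraint explicitly as a relation of the form $\nabla(f_1-f_2)(\cdot) - \nabla(g_1-g_2)(\cdot) = 0$ in $T_{p_i}\mathbb{R}^2$.

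However, there is a genuine gap in your argument at the step you yourself flag as ``the main obstacle,'' and it is not merely a sign bookkeeping issue. You propose to ``project onto the $\boldsymbol{f}$- and $\boldsymbol{g}$-parts separately,'' but in $T_{p_i}M\cong\mathbb{R}^2$ there is no canonical splitting into an $\boldsymbol{f}$-direction and a $\boldsymbol{g}$-direction: the two gradients $\nabla(f_1-f_2)$ and $\nabla(g_1-g_2)$ are just vectors in the same $\mathbb{R}^2$. Your separation works only when they are linearly independent. When they happen to be parallel (with the right sign), the loop relation has genuine positive solutions $l^{local}\not\equiv 0$, so the pointwise emptiness you are aiming for is simply false in general. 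The non-self-intersection hypothesis only gives you that each individual gradient is nonvanishing, not that the two families are transverse.

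The paper's resolution is to observe that parallelism of $\nabla(f_1-f_2)$ and $\nabla(g_1-g_2)$ is a codimension-one condition on $p_i\in\mathbb{R}^2$, and to combine this with the bound $\operatorname{dim}\mathcal{M}_{T/\Gamma}(\vec{\boldsymbol{q}})\leq 0$ established for case (\ref{case-nontrivial}): for generic perturbation data, the zero-dimensional set $\mathcal{M}_{T/\Gamma}$ misses the parallelism locus, hence the fiber product $\mathcal{M}_{T,\Gamma}$ is empty. That is, the conclusion is \emph{generic} emptiness, not pointwise emptiness, and the dimension count on $\mathcal{M}_{T/\Gamma}$ is indispensable. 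To repair your proof you would need to replace the ``sign-consistency'' plan with this codimension-one/transversality argument; otherwise the claim that the coefficients can be arranged to have a fixed sign is exactly the place the argument breaks down.
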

\begin{proof}
    First, if there exists $i$ such that $G^i_\Gamma\cap\pi^{-1}(\Gamma_{core})$ has more than 2 components, it is easy to check that $\operatorname{dim}\mathcal{M}_{T/\Gamma}(\vec{\boldsymbol{q}})\leq k-2<0$. Consequently, $\mathcal{M}_{T/\Gamma}(\vec{\boldsymbol{q}})$ and $\mathcal{M}_{T,\Gamma}(\vec{\boldsymbol{q}})$ are both empty.

    It then suffices to consider the case of $\kappa=2$. 
    There are two subcases then.
    
    ~\\
    \noindent
    (1) Suppose there are two consecutive exterior edges of $\Gamma$ pointing to the same side of $\Gamma_{core}$. 
    In both sides of Figure \ref{fig-diagonal-same}, (\ref{eq-blow}) is equivalent to solving 
    \begin{equation*}
        \nabla(f_1-f_2)x_1-\nabla(g_1-g_2)(x_1+x_2+x_3)=0,
    \end{equation*}
    where $x_1,x_2,x_3\in\mathbb{R}_+\cup\{0\}$ and $|x_1|^2+|x_2|^2+|x_3|^2=1$.
    However, the solution exists only if $\nabla(f_1-f_2)$ is parallel to $\nabla(g_1-g_2)$, which is a constraint of codimension 1. 
    Since $\operatorname{dim}(\mathcal{M}_{T/\Gamma}(\vec{\boldsymbol{q}}))\leq0$, it follows that $\mathcal{M}_{T,\Gamma}(\vec{\boldsymbol{q}})$ is generically empty.
    
    ~\\
    \noindent
    (2) Suppose there are two consecutive exterior edges of $\Gamma$ pointing to different sides of $\Gamma_{core}$.  
    On both sides of Figure \ref{fig-diagonal-diff}, (\ref{eq-blow}) is equivalent to solving 
    \begin{equation*}
        \nabla(f_1-f_2)(x_1+x_3)-\nabla(g_1-g_2)(x_1+x_2)=0,
    \end{equation*}
    where $x_1,x_2,x_3\in\mathbb{R}_+\cup\{0\}$ and $|x_1|^2+|x_2|^2+|x_3|^2=1$.
    Similarly, the solution exists only if $\nabla(f_1-f_2)$ is parallel to $\nabla(g_1-g_2)$, which is a constraint of codimension 1. 
    Since $\operatorname{dim}(\mathcal{M}_{T/\Gamma}(\vec{\boldsymbol{q}}))\leq0$, it follows that $\mathcal{M}_{T,\Gamma}(\vec{\boldsymbol{q}})$ is generically empty.
\end{proof}

\begin{lemma}
    \label{lemma-no-diagonal}
    Fixing a generic choice of perturbation data, if $|E_\Gamma|>1$, then $\mathcal{M}_{T,\Gamma}(\vec{\boldsymbol{q}})$ is empty.
    If $|E_\Gamma|=1$, $\mathcal{M}_{T,\Gamma}(\vec{\boldsymbol{q}})$ is a smooth manifold such that 
    \begin{equation*}
        \operatorname{dim}(\mathcal{M}_{T,\Gamma}(\vec{\boldsymbol{q}}))=\operatorname{dim}(\mathcal{M}_{T}(\vec{\boldsymbol{q}}))-1.
    \end{equation*}
\end{lemma}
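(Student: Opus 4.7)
The plan is to combine the two previous classifications, Lemmas \ref{lemma-diagonal-vanish} and \ref{lemma-diagonal-vanish-2}, to clean out the $|E_\Gamma|>1$ regime, and then to treat the remaining $|E_\Gamma|=1$ stratum by a direct codimension count coupled with the Sard--Smale argument from Appendix \ref{section-appendix-trans}.

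For the first claim, Lemma \ref{lemma-diagonal-vanish} tells us that $\mathcal{M}_{T,\Gamma}(\vec{\boldsymbol{q}})$ can be nonempty only if $|E_\Gamma|=1$ or $\Gamma$ satisfies case (\ref{case-nontrivial}). Under the hypothesis $|E_\Gamma|>1$, only the latter possibility remains, and Lemma \ref{lemma-diagonal-vanish-2} shows that the moduli space is then empty. So the first part of the conclusion is essentially an assembly of the two earlier lemmas, with no new input beyond observing that the hypothesis $|E_\Gamma|>1$ automatically rules out the isolated-edge case of the dichotomy.

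For the second claim, assume $|E_\Gamma|=1$ and write $E_\Gamma=\{e\}$. If $e\in E_{inn}$, setting $l(e)=0$ cuts out a codimension-$1$ face of the length parameter space $\mathcal{L}_T$, and the unique linearized equation (\ref{eq-blow}) over $e$ reads $\varphi_i(\tilde{v}_{en}(\tilde{e}))-\varphi_i(\tilde{v}_{st}(\tilde{e}))=\nabla_g F(\tilde{e})$ after the normalization $l^{local}(e)=1$; hence $\varphi_i$ is determined up to a base-point translation in $T_{p_i}M$. Consequently $\mathcal{M}^{local}_\Gamma\to M^r$ is a diffeomorphism and $\mathcal{M}_{T,\Gamma}(\vec{\boldsymbol{q}})\cong\mathcal{M}_{T/\Gamma}(\vec{\boldsymbol{q}})$. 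Applying the Sard--Smale argument of Lemma \ref{lemma-fredholm} to the restriction of $\widehat{\Psi}$ to $\{l(e)=0\}\subset\mathcal{L}_T$ yields transversality for a generic choice of perturbation data; removing the single length parameter $l(e)$ drops the Fredholm index by exactly $1$, producing $\dim\mathcal{M}_{T,\Gamma}(\vec{\boldsymbol{q}})=\dim\mathcal{M}_T(\vec{\boldsymbol{q}})-1$. The case $e\in E_{ext}\cap E_{mar}$ is handled identically, since collapsing such a single edge is also codimension $1$ in $\mathcal{L}_T$ and the linearized condition again has a unique normalized solution.

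The main obstacle, to my mind, is verifying that the restricted map $\widehat{\Psi}|_{\{l(e)=0\}}$ remains submersive onto the relevant product of unstable manifolds and diagonals appearing in $A$; the forgotten length parameter might in principle lower the rank of the linearization by more than the one dimension we are removing, which would destroy the expected dimension count. I would circumvent this by localizing the perturbation of the metric away from the vertices incident to $e$, so that the remaining length variables together with the metric perturbations still span the required directions in the target; the Fredholm argument on the boundary stratum then proceeds just as in Appendix \ref{section-appendix-trans}.
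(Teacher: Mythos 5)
Your proposal is correct and follows essentially the same path as the paper's proof: the first part is an assembly of Lemmas \ref{lemma-diagonal-vanish} and \ref{lemma-diagonal-vanish-2}, and the second part rests on the diffeomorphism $\mathcal{M}_{T,\Gamma}(\vec{\boldsymbol{q}})\simeq\mathcal{M}_{T/\Gamma}(\vec{\boldsymbol{q}})$ for a single collapsed edge together with the transversality of $\mathcal{M}_{T/\Gamma}(\vec{\boldsymbol{q}})$ from the preceding lemma. You flesh out two points the paper leaves implicit — that $\mathcal{M}^{local}_\Gamma\to M^r$ is a diffeomorphism when $|E_\Gamma|=1$ because (\ref{eq-blow}) under the normalization $l^{local}(e)=1$ has a unique solution up to translation, and that the Sard--Smale argument of Lemma \ref{lemma-fredholm} applies verbatim on the codimension-one boundary face $\{l(e)=0\}$ of $\mathcal{L}_T$ — but these are elaborations of, not deviations from, the paper's argument; the concern you raise about the restricted linearization losing rank is exactly what the already-established transversality of $\mathcal{M}_{T/\Gamma}(\vec{\boldsymbol{q}})$ is invoked to settle.
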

\begin{proof}
    The first part follows from Lemma \ref{lemma-diagonal-vanish} and Lemma \ref{lemma-diagonal-vanish-2}.
    The second part follows from the transversality of $\mathcal{M}_{T/\Gamma}(\vec{\boldsymbol{q}})$ and that $\mathcal{M}_{T,\Gamma}(\vec{\boldsymbol{q}})\simeq\mathcal{M}_{T/\Gamma}(\vec{\boldsymbol{q}})$ when $|E_\Gamma|=1$.
\end{proof}

\printbibliography

@book{seidel2008fukaya,
  title={Fukaya categories and Picard-Lefschetz theory},
  author={Seidel, Paul},
  volume={10},
  year={2008},
  publisher={European Mathematical Society}
}

@misc{colin2020applications,
    title={Applications of higher\hyp{}dimensional Heegaard Floer homology to contact topology},
    author={Vincent Colin and Ko Honda and Yin Tian},
    year={2020},
    eprint={2006.05701},
    archivePrefix={arXiv},
    primaryClass={math.SG}
}

@article {fukaya1997zero,
    AUTHOR = {Fukaya, Kenji and Oh, Yong-Geun},
     TITLE = {Zero-loop open strings in the cotangent bundle and {M}orse
              homotopy},
   JOURNAL = {Asian J. Math.},
  FJOURNAL = {Asian Journal of Mathematics},
    VOLUME = {1},
      YEAR = {1997},
    NUMBER = {1},
     PAGES = {96--180},
      ISSN = {1093-6106},
   MRCLASS = {58E05 (58D29 58F09)},
  MRNUMBER = {1480992},
MRREVIEWER = {Joa Weber},
       DOI = {10.4310/AJM.1997.v1.n1.a5},
       URL = {https://doi.org/10.4310/AJM.1997.v1.n1.a5},
}

@misc{ekholm2021skeins,
      title={Skeins on branes}, 
      author={Tobias Ekholm and Vivek Shende},
      year={2019},
      eprint={1901.08027},
      archivePrefix={arXiv},
      primaryClass={math.SG}
}

@article{honda2022jems,
    title={Higher-dimensional Heegaard Floer homology and Hecke algebras},
    author={Honda, Ko and Tian, Yin and Yuan, Tianyu},
    journal={J. Eur. Math. Soc. (JEMS)},
    year={to appear}
}

@misc{tian2022example,
    title={An example of higher-dimensional Heegaard Floer homology},
    author={Tian, Yin and Yuan, Tianyu},
    year={2022},
    eprint={2212.10187},
    archivePrefix={arXiv},
    primaryClass={math.SG}
}

@article{fukaya1996morse,
  title={Morse homotopy and its quantization},
  author={Fukaya, Kenji},
  journal={Geometric topology (Athens, GA, 1993)},
  volume={2},
  pages={409--440},
  year={1996}
}

@article{fukaya1996m,
  title={Morse homotopy and Chern-Simons perturbation theory},
  author={Fukaya, Kenji},
  JOURNAL = {Comm. Math. Phys.},
  FJOURNAL = {Communications in Mathematical Physics},
  volume={181},
  pages={37--90},
  year={1996},
  publisher={Springer}
}

@article{abouzaid2011topological,
  title={A topological model for the Fukaya categories of plumbings},
  author={Abouzaid, Mohammed},
  JOURNAL = {J. Differential Geom.},
  FJOURNAL = {Journal of Differential Geometry},
  volume={87},
  number={1},
  pages={1--80},
  year={2011},
  publisher={Lehigh University}
}

@inproceedings{gaiotto2013spectral,
  title={Spectral networks},
  author={Gaiotto, Davide and Moore, Gregory W and Neitzke, Andrew},
  booktitle={Annales Henri Poincar{\'e}},
  volume={14},
  number={7},
  pages={1643--1731},
  year={2013},
  organization={Springer}
}

@article {Watanabe2018,
    AUTHOR = {Watanabe, Tadayuki},
    TITLE = {Higher order generalization of {F}ukaya's {M}orse homotopy
          invariant of 3-manifolds {I}. {I}nvariants of homology
          3-spheres},
    JOURNAL = {Asian J. Math.},
    FJOURNAL = {Asian Journal of Mathematics},
    VOLUME = {22},
    YEAR = {2018},
    NUMBER = {1},
    PAGES = {111--180},
    ISSN = {1093-6106},
    MRCLASS = {57R57 (57M27 58D29 58E05)},
    MRNUMBER = {3805158},
    MRREVIEWER = {Nikolai N. Saveliev},
    DOI = {10.4310/AJM.2018.v22.n1.a4},
    URL = {https://doi.org/10.4310/AJM.2018.v22.n1.a4},
}

@article{neitzke2020q,
  title={q-nonabelianization for line defects},
  author={Neitzke, Andrew and Yan, Fei},
  journal={Journal of High Energy Physics},
  volume={2020},
  number={9},
  pages={1--67},
  year={2020},
  publisher={Springer}
}

\end{document}